\numberwithin{equation}{section}
\newtheorem{satz}{Theorem}[section]
\newtheorem{proposition}[satz]{Proposition}
\newtheorem{lemma}[satz]{Lemma}
\newtheorem{korollar}[satz]{Corollary}
\theoremstyle{remark}
\newtheorem{bemerkung}[satz]{Remark}
\newtheorem*{beispiel*}{Example}
\newtheorem*{bemerkung*}{Remark}
\DeclarePairedDelimiter	{\abs}		{\lvert}	{\rvert}
\DeclarePairedDelimiter	{\babs}		{\Big\lvert}	{\Big\rvert}
\DeclarePairedDelimiter	{\bbrace}		{\Big\lbrace}	{\Big\rbrace}
\DeclarePairedDelimiter	{\norm}		{\lVert}	{\rVert}
\DeclarePairedDelimiter	{\skal}		{\langle}	{\rangle}
\DeclareMathOperator	{\IE}		{\mathbb{E}}
\DeclareMathOperator	{\IP}		{\mathbb{P}}
\DeclareMathOperator	{\IN}		{\mathbb{N}}
\renewcommand{\epsilon}{\varepsilon}
\title[Concentration of polynomials in $\alpha$-sub-exponential r.v.]{Concentration inequalities for polynomials in $\alpha$-sub-exponential random variables}
\author		{Friedrich G\"otze$^1$, Holger Sambale$^1$ and Arthur Sinulis$^1$}
\address	{$^1$Faculty of Mathematics, Bielefeld University, Bielefeld, Germany}
\email[A1]		{goetze@math.uni-bielefeld.de}
\email[A2]		{hsambale@math.uni-bielefeld.de}
\email[A3]		{asinulis@math.uni-bielefeld.de}
\begin{document}
\subjclass{Primary 60E15, 60F10, Secondary 46E30, 46N30}
\keywords{Concentration of measure phenomenon, Orlicz norms, polynomial chaos, Poisson chaos, sub-exponential random variables}
\thanks{This research was supported by the German Research Foundation via CRC 1283.}
\begin{abstract}
	In this work we derive multi-level concentration inequalities for polynomial functions in independent random variables with a $\alpha$-sub-exponential tail decay.
A particularly interesting case is given by quadratic forms $f(X_1, \ldots, X_n) = \skal{X,A X}$, for which we prove Hanson--Wright-type inequalities with explicit dependence on various norms of the matrix $A$. A consequence of these inequalities is a two-level concentration inequality for quadratic forms in $\alpha$-sub-exponential random variables, such as quadratic Poisson chaos.
  
We provide various applications of these inequalities. Among these are generalizations the results given by Rudelson--Vershynin from sub-Gaussian to $\alpha$-sub-exponential random variables, i.\,e.\ concentration of the Euclidean norm of the linear image of a random vector, small ball probability estimates and concentration inequalities for the distance between a random vector and a fixed subspace. Moreover, we obtain concentration inequalities for the excess loss in a fixed design linear regression and the norm of a randomly projected random vector.
\end{abstract}
\date{\today}
\maketitle

\section{Introduction}
Let $X_1, \ldots, X_n$ be independent random variables and let $f: \mathbb{R}^n \to \mathbb{R}$ be a measurable function. One of the main and rather classical questions of probability theory consists in finding good estimates on the fluctuations of $f(X_1,\ldots,X_n)$ around a deterministic value (e.\,g. its expectation or median), i.\,e.\ to determine a function $h: [0,\infty) \to [0,1]$ such that
\begin{equation}
\mathbb{P}\Big( \abs{f(X_1,\ldots, X_n) - \IE f(X_1,\ldots,X_n)} \ge t \Big) \le h(t).
\end{equation}
Of course, $h$ should take into account both the information given by $f$ as well as $X_1, \ldots, X_n$. Perhaps one of the most well-known \emph{concentration inequalities} is the tail decay of the Gaussian distribution: if $X_1, \ldots, X_n$ are independent and are distributed as a standard a standard normal distribution $\mathcal{N}(0,1)$, and $f(X_1,\ldots, X_n) = n^{-1/2} \sum_{i = 1}^n X_i$, then $f(X_1,\ldots,X_n) \sim \mathcal{N}(0,1)$ and
\begin{equation}\label{eqn:GaussianDecay}
\mathbb{P}\Big( \abs{f(X_1,\ldots, X_n) - \IE f(X_1,\ldots, X_n)} \ge t \Big) \le 2 \exp \Big( -\frac{t^2}{2} \Big).
\end{equation}
Using the entropy method, it is possible to show that the estimate \eqref{eqn:GaussianDecay} remains true for any Lipschitz function $f$ (see e.\,g. \cite[Section 5]{Led01}). On the other hand, if $f$ is a polynomial of degree $2$, then the tails of $f(X_1,\ldots, X_n)$ are heavier. Indeed, the \emph{Hanson--Wright inequality} states that for a quadratic form in independent, standard Gaussian random variables $X_1, \ldots, X_n$ we have
\begin{equation} \label{eqn:HansonWright}
\mathbb{P} \Big( \Big \lvert \sum_{i,j = 1}^n a_{ij} X_i X_j - \mathrm{trace}(A) \Big\rvert \ge t \Big) \le 2 \exp \Big( - \frac{1}{C} \min\Big( \frac{t^2}{\norm{A}_{\mathrm{HS}}}, \frac{t}{\norm{A}_{\mathrm{op}}} \Big) \Big).
\end{equation}
Here, $\norm{A}_{\mathrm{op}}$ is the operator norm and $\norm{A}_{\mathrm{HS}}$ the Hilbert--Schmidt norm (also called Frobenius norm) of $A$ respectively. For a proof see \cite{RV13}. Thus the tails of the quadratic form decay like $\exp(-t)$ for large $t$. There are inequalities similar to \eqref{eqn:HansonWright} for the multilinear chaos in Gaussian random variables proven in \cite{La06} (and in fact, a lower bound using the same quantities as well), and in \cite{AW15} for polynomials in sub-Gaussian random variables. However, a key component is that the individual random variables $X_i$ have a sub-Gaussian tail decay, i.\,e. $\IP(\abs{X_i} \ge t) \le c\exp(-C t^2)$ for some constants $c, C$.

In recent works \cite{BGS18}, \cite{GSS18}, \cite{GSS18b} we have studied similar concentration inequalities for bounded functions $f$ of either independent or weakly dependent random variables. There, the situation is clearly different, since the distribution of $f(X_1,\ldots,X_n)$ has a compact support, and is thus sub-Gaussian, and the challenge is to give an estimate depending on different quantities derived from $f$ and $X$. However, there are many situations of interest where boundedness does not hold, such as quadratic forms in unbounded random variables as in \eqref{eqn:HansonWright}. Here it seems reasonable to focus on certain classes of functions for which the tail behavior can directly be traced back to the tails of the random variables under consideration. Therefore, in this note we restrict ourselves to polynomial functions.

In the following results, the setup is as follows. We consider independent random variables $X_1, \ldots, X_n$ which have \emph{$\alpha$-sub-exponential tail decay}. By this we mean that there exists two constants $c, C$ and a parameter $\alpha > 0$ such that for all $i = 1,\ldots,n$ and $t \ge 0$
\begin{equation}\label{eqn:TailDecayXi}
	\IP\Big( \abs{X_i} \ge t \Big) \le c \exp\Big( - \frac{t^\alpha}{C} \Big).
\end{equation}
There are many interesting choices of random variables $X_i$ of this type, such as bounded random variables (for any $\alpha > 0$), random variables with a sub-Gaussian (for $\alpha = 2$) or sub-exponential distribution ($\alpha = 1$) such as Poisson random variables, or ``fatter'' tails such as Weibull random variables with shape parameter $\alpha \in (0,1]$.

We reformulate condition \eqref{eqn:TailDecayXi} in terms of so-called \emph{exponential Orlicz norms}, but we emphasize that these two concepts are equivalent. For any random variable $X$ on a probability space $(\Omega, \mathcal{A}, \mathbb{P})$ and $\alpha > 0$ define the (quasi-)norm
\begin{equation}\label{eqn:definitionOrliczNorm}
\lVert X \rVert_{\psi_\alpha} \coloneqq \inf \left\{t > 0 \colon \mathbb{E} \exp \left(\frac{|X|^{\alpha}}{t^\alpha}\right) \le 2 \right\},
\end{equation}
adhering to the standard definition $\inf \emptyset = \infty$. Strictly speaking, this is a norm for $\alpha \ge 1$ only, since otherwise the triangle inequality does not hold. Nevertheless, the above expression makes sense for any $\alpha > 0$, and we choose to call it a norm in these cases as well. For some properties of the Orlicz norms in the case $\alpha \in (0,1]$, see Appendix \ref{section:OrliczNorms}. In this note we concentrate on values $\alpha = 2/q$ for some $q \in \mathbb{N}$, but also prove results for the case $\alpha \in (0,1]$. Throughout this work, we denote by $C$ an absolute constant and by $C_{l_1,\ldots,l_k}$ a constant that only depends on some parameters $l_1, \ldots, l_k$.

For illustration, we start with a simplified version of some of our results which may already be sufficient for application purposes. The first result is a concentration inequality which may be considered as a generalization of the Hanson--Wright inequality \eqref{eqn:HansonWright} to quadratic forms in random variables with $\alpha$-sub-exponential tail decay.

\begin{proposition}\label{proposition:weakFormHansonWright}
Let $X_1, \ldots, X_n$ be independent random variables satisfying $\IE X_i = 0, \IE X_i^2 = \sigma_i^2$, $\norm{X_i}_{\Psi_{\alpha}} \le M$ for some $\alpha \in (0,1] \cup \{2 \}$, and $A$ be a symmetric $n \times n$ matrix. For any $t > 0$ we have
\begin{equation*}
\mathbb{P} \Big( \big\lvert \sum_{i,j} a_{ij} X_i X_j - \sum_{i = 1}^n \sigma_i^2 a_{ii} \big\rvert \ge t \Big) \le 2 \exp \Big( - \frac{1}{C} \min \Big( \frac{t^2}{M^4 \norm{A}_{\mathrm{HS}}^2}, \Big( \frac{t}{M^2\norm{A}_{\mathrm{op}}} \Big)^{\frac{\alpha}{2}} \Big) \Big).
\end{equation*}
\end{proposition}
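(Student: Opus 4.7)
The plan is to follow the Rudelson--Vershynin strategy for the classical Hanson--Wright inequality, adapting each step to the $\alpha$-sub-exponential regime. Split
\[
\sum_{i,j} a_{ij} X_i X_j - \sum_{i} \sigma_i^2 a_{ii} = \sum_i a_{ii}(X_i^2 - \sigma_i^2) + \sum_{i \ne j} a_{ij} X_i X_j \eqqcolon S_{\mathrm{diag}} + S_{\mathrm{off}},
\]
and bound each piece separately by half of the target tail.

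For $S_{\mathrm{diag}}$, the product rule for Orlicz norms (proved in the appendix for $\alpha \in (0,1]$ and standard for $\alpha = 2$) yields $\norm{X_i^2 - \sigma_i^2}_{\psi_{\alpha/2}} \le C M^2$, so $S_{\mathrm{diag}}$ is a centered sum of independent $\psi_{\alpha/2}$ random variables. A Bernstein-type inequality for such sums produces the two-regime tail
\[
\IP(\abs{S_{\mathrm{diag}}} \ge t) \le 2 \exp\Big(-\frac{1}{C} \min\Big(\frac{t^2}{M^4 \sum_i a_{ii}^2}, \Big(\frac{t}{M^2 \max_i \abs{a_{ii}}}\Big)^{\alpha/2}\Big)\Big),
\]
which dominates the claim after using $\sum_i a_{ii}^2 \le \norm{A}_{\mathrm{HS}}^2$ and $\max_i \abs{a_{ii}} \le \norm{A}_{\mathrm{op}}$.

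The off-diagonal term $S_{\mathrm{off}}$ is the main obstacle. I would first apply the standard de la Pe\~na--Montgomery-Smith decoupling inequality to reduce tail bounds for $S_{\mathrm{off}}$ to those for the decoupled chaos $\widetilde S = \sum_{i,j} a_{ij} X_i \widetilde X_j$ with $\widetilde X$ an independent copy of $X$. Conditioning on $\widetilde X$ turns $\widetilde S$ into a linear form $\sum_i (A\widetilde X)_i X_i$ in $\psi_\alpha$ variables, so a weighted Bernstein bound gives a conditional tail with $\norm{A\widetilde X}_2$ in the Gaussian regime and $\norm{A\widetilde X}_\infty$ in the $\psi_\alpha$ regime. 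Integrating out $\widetilde X$ then requires controlling these two quantities, whose typical scales are $\lesssim M\norm{A}_{\mathrm{HS}}$ and $\lesssim M \norm{A}_{\mathrm{op}}$ respectively (the latter via a union bound over rows, each a $\psi_\alpha$ linear form).

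The technical heart of the argument is how the two independent $\psi_\alpha$ fluctuations compound multiplicatively into the $\psi_{\alpha/2}$-decay appearing in the statement; getting both the correct matrix norms and the correct exponent out of the integration is delicate. The cleanest route avoids tracking tails pointwise and proceeds instead via the moment method: establish a uniform bound
\[
\norm{\widetilde S}_p \le C M^2 \bigl(\sqrt{p}\, \norm{A}_{\mathrm{HS}} + p^{2/\alpha} \norm{A}_{\mathrm{op}}\bigr), \qquad p \ge 2,
\]
by iterating the conditional linear-form $L^p$-estimate inside the expectation over $\widetilde X$ and using the moment characterization of the $\psi_\alpha$-norms from the appendix. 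Inverting via Markov's inequality $\IP(\abs{\widetilde S} \ge e\norm{\widetilde S}_p) \le e^{-p}$ and optimizing over $p$ then recovers the claimed two-regime tail with exponent $\alpha/2$ in the heavy-tail range.
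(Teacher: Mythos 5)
Your architecture (diagonal/off-diagonal split, $\norm{X_i^2-\sigma_i^2}_{\Psi_{\alpha/2}}\le CM^2$ plus a Bernstein-type bound for the diagonal, decoupling for the off-diagonal part) is sound, and the moment bound you aim for, $\norm{\widetilde S}_p\le CM^2(\sqrt{p}\,\norm{A}_{\mathrm{HS}}+p^{2/\alpha}\norm{A}_{\mathrm{op}})$, is indeed true and does invert to the claimed tail. The genuine gap is that this moment bound is asserted rather than proved: the step ``iterate the conditional linear-form $L^p$-estimate inside the expectation over $\widetilde X$'' does not go through as described. Conditioning on $\widetilde X$ gives
\begin{equation*}
\norm{\widetilde S}_p \lesssim M\Big(\sqrt{p}\,\big\lVert \norm{A\widetilde X}_2 \big\rVert_p + p^{1/\alpha}\big\lVert \norm{A\widetilde X}_\infty \big\rVert_p\Big),
\end{equation*}
and both remaining moments are problematic. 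If you bound $\norm{A\widetilde X}_\infty\le\norm{A\widetilde X}_2$, you create a term of order $p^{1/\alpha}M^2\norm{A}_{\mathrm{HS}}$, which is \emph{not} dominated by $\sqrt{p}\,\norm{A}_{\mathrm{HS}}+p^{2/\alpha}\norm{A}_{\mathrm{op}}$: after inversion it inserts $(t/\norm{A}_{\mathrm{HS}})^{\alpha}$ into the minimum, and this is strictly the smallest term throughout the range $M^2\norm{A}_{\mathrm{HS}}\le t\le M^2\norm{A}_{\mathrm{HS}}^2/\norm{A}_{\mathrm{op}}$, so the resulting tail is genuinely weaker than the one claimed. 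If instead you control the maximum over rows by a union bound, you pick up a factor $n^{1/p}$, which is not dimension-free for $p\lesssim\log n$. Finally, $\lVert\norm{A\widetilde X}_2\rVert_p$ is the $L^{p/2}$-norm of the quadratic form $\skal{\widetilde X, A^TA\widetilde X}$, so controlling it is circular; in the sub-Gaussian case Rudelson and Vershynin avoid this by a Gaussian comparison of the conditional moment generating function, which has no analogue for $\alpha<2$.

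This difficulty is exactly what the paper does not attempt to resolve by hand: for $\alpha\in(0,1]$ the proposition is deduced from the $d=2$ case of Corollary \ref{corollary:PsiAlphaKL15}, i.e.\ from the two-sided moment estimates for Weibull chaoses of Kwapie{\'n}--Lata{\l}a (Lemma \ref{wb}), combined with the observation that every norm $\max_{\mathbf{i}_I}\norm{A_{\mathbf{i}_{I^c}}}_{\mathcal{J}}$ other than $\norm{A}_{\mathrm{HS}}$ is bounded by $\norm{A}_{\mathrm{op}}$ and every exponent other than $2$ is at least $\alpha/2$; the case $\alpha=2$ is the classical Hanson--Wright inequality. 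To repair your argument you should invoke such a two-sided chaos moment estimate directly (which makes the conditioning step unnecessary), or, for $\alpha=2/q$, use Theorem \ref{thmch} together with Lemma \ref{lemma:restlichenormen}; your treatment of the diagonal part can then be kept as is.
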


As we will see in Proposition \ref{proposition:QuadraticFormWithDiagonal}, the tail decay $\exp(-t^{\alpha/2} \norm{A}_{\mathrm{op}}^{-\alpha/2})$ (for large $t$) can be sharpened by replacing the operator norm by a smaller norm. Actually, the technical result contains up to four different regimes instead of two as above.

The next theorem provides tail estimates for polynomials in independent random variables. Note that this is not a generalization of Proposition \ref{proposition:weakFormHansonWright} due to the use of the Hilbert--Schmidt instead of the operator norms.

\begin{satz}\label{wHSn}
Let $X_1, \ldots, X_n$ be independent random variables satisfying $\norm{X_i}_{\Psi_{\alpha}} \le M$ for some $\alpha \in (0,1] \cup \{2\}$ and let $f \colon \mathbb{R}^n \to \mathbb{R}$ be a polynomial of total degree $D \in \mathbb{N}$. Then for all $t > 0$
\begin{equation}
\mathbb{P} (|f(X) - \mathbb{E}f(X)| \ge t) \le 2\exp\Big(- \frac{1}{C_{D,\alpha}} \min_{1 \le d \le D} \Big(\frac{t}{M^d \lVert \mathbb{E} f^{(d)}(X) \rVert_\mathrm{HS}} \Big)^{\alpha/d}\Big).
\end{equation}
In particular, if $\lVert \mathbb{E} f^{(d)}(X) \rVert_\mathrm{HS} \le 1$ for $d = 1, \ldots, D$, then
\[
\mathbb{E} \exp\left(\frac{C_{D,\alpha}}{M^\alpha}|f(X)|^{\frac{\alpha}{D}}\right) \le 2,
\]
or equivalently
\[
\norm{f(X)}_{\Psi_{\frac{\alpha}{D}}} \le C_{d,\alpha} M^D.
\]
\end{satz}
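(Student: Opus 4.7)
The natural route is the moment method. I would first establish the $L^p$ bound
\[
\|f(X) - \mathbb{E} f(X)\|_p \le C_{D,\alpha} \sum_{d=1}^{D} p^{d/\alpha} M^d \|\mathbb{E} f^{(d)}(X)\|_{\mathrm{HS}}
\]
for all $p \ge 2$, and then convert it to the tail bound via the standard optimization of Markov's inequality: from $\|Z\|_p \le \sum_d p^{d/\alpha} A_d$ one deduces $\mathbb{P}(|Z| \ge C' \sum_d p^{d/\alpha} A_d) \le e^{-p}$, and choosing $p$ to balance the contributions produces the multi-level minimum appearing in the exponent. The ``in particular'' statement then follows by integrating the tail under the normalization $\|\mathbb{E} f^{(d)}(X)\|_{\mathrm{HS}} \le 1$, since for large $t$ the minimum in the exponent is attained at $d=D$, yielding a $\psi_{\alpha/D}$-tail which is equivalent to the stated Orlicz norm bound.

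To prove the moment bound I would induct on $D$, with the trivial base case $D=0$. The inductive step rests on a first-order moment inequality of the form
\[
\|g(X) - \mathbb{E} g(X)\|_p \le C_\alpha M p^{1/\alpha} \Big\|\bigl(\textstyle\sum_i (\partial_i g(X))^2\bigr)^{1/2}\Big\|_p,
\]
which for $\alpha = 2$ is the sub-Gaussian Poincar\'e/Gross inequality, and for $\alpha \in (0,1]$ comes from a Bobkov--Ledoux-style modified log-Sobolev inequality for $\psi_\alpha$ variables, using the Orlicz-norm tools collected in Appendix~\ref{section:OrliczNorms}. Applying it to $g = f$ and then splitting each $\partial_i f(X)$ into its mean $\mathbb{E}\partial_i f(X)$ plus a centered part, the mean contribution aggregates (after Cauchy--Schwarz across the multi-index) into the term $p^{1/\alpha} M \|\mathbb{E} f^{(1)}(X)\|_{\mathrm{HS}}$, while the centered part is again a polynomial, now of degree $D-1$, to which the inductive hypothesis applies. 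Telescoping over all levels produces the claimed sum, with the $d$-th derivative naturally entering inside an expectation because at each stage one keeps only the mean part and recurses on what remains.

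The two main obstacles are the following. First, the case $\alpha \in (0,1]$ is delicate because $\|\cdot\|_{\psi_\alpha}$ is only a quasi-norm, so even the $L^p$ behaviour of sums of independent $\psi_\alpha$ random variables that underlies the first-order inequality above must be re-derived rather than quoted from the classical Gaussian/sub-exponential toolbox; this is where the appendix on Orlicz norms becomes essential. Second, correctly tracking the decentering along the induction---so that the $d$-th derivative appears inside an expectation and that the multi-index aggregation matches exactly the Hilbert--Schmidt norm, without spurious extra factors in $n$ or $D$---is a bookkeeping issue, but it is what determines whether the final constant $C_{D,\alpha}$ is of the right dimension-free form.
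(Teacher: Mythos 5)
Your overall architecture --- an $L^p$-moment bound of the form $\norm{f(X)-\IE f(X)}_p \le C_{D,\alpha}\sum_{d=1}^D p^{d/\alpha}M^d\norm{\IE f^{(d)}(X)}_{\mathrm{HS}}$, converted to a tail bound by optimizing Markov's inequality and then integrated to get the Orlicz statement --- matches the paper's (the conversion is Proposition \ref{normtoconc} there). The gap is in how you propose to prove the moment bound. The first-order inequality
\[
\norm{g(X)-\IE g(X)}_p \le C_\alpha M p^{1/\alpha}\Big\lVert\big(\textstyle\sum_i(\partial_i g(X))^2\big)^{1/2}\Big\rVert_p
\]
is a functional (Sobolev-type) inequality and is \emph{not} implied by the hypothesis $\norm{X_i}_{\Psi_\alpha}\le M$. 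For $\alpha=2$, a bounded variable supported on $[0,1]\cup[2,3]$ has $\norm{X}_{\Psi_2}$ bounded by a universal constant yet fails even the Poincar\'e inequality (take $g$ locally constant on each component of the support), so no Gross/Aida--Stroock-type moment inequality with a constant depending only on $M$ is available. For $\alpha\in(0,1)$ the obstruction is structural: a Poincar\'e inequality forces $\Psi_1$-concentration of Lipschitz functions, so a variable with genuine $\Psi_\alpha$ tails, $\alpha<1$, cannot satisfy one, and the Bobkov--Ledoux modified log-Sobolev inequalities you invoke require log-concavity (essentially $\alpha\ge 1$) and an absolutely continuous law, neither of which is assumed. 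This is exactly why the theorem is restricted to polynomials and why Adamczak--Wolff treat the log-Sobolev and the general sub-Gaussian settings by different methods.

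The paper's actual route avoids gradients entirely: Theorems \ref{thmpol} and \ref{thmpolgen} are proved by decoupling and symmetrization, comparison of moments of linear forms in the $X_i$ to Gaussian (Lemma \ref{comp}) resp.\ symmetric Weibull (Lemma \ref{comp2}) chaoses --- which \emph{is} a consequence of the Orlicz bound alone --- followed by the two-sided moment estimates of Lata{\l}a (Theorem \ref{Lat}) resp.\ Kolesko--Lata{\l}a (Lemma \ref{wb}); Theorem \ref{wHSn} then follows by dominating every partition norm by $\norm{\IE f^{(d)}(X)}_{\mathrm{HS}}$ and every exponent $2/\abs{\mathcal{J}}$ (resp.\ $2\alpha/(2\abs{I}+\alpha\abs{\mathcal{J}})$) by $\alpha/d$ in the nontrivial regime $t/(M^d\norm{\IE f^{(d)}(X)}_{\mathrm{HS}})\ge 1$. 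A secondary point: even where a Sobolev inequality is available, your telescoping step needs a Hilbert-space-valued version of the inductive hypothesis to handle $\norm{(\sum_i(\partial_i f-\IE\partial_i f)^2)^{1/2}}_p$, not the scalar one. To repair the proposal you would have to replace the gradient iteration by the decoupling/comparison machinery, at which point it coincides with the paper's argument.
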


Intuitively, Theorem \ref{wHSn} states that a polynomial in random variables with tail decay as in \eqref{eqn:TailDecayXi} also exhibits $\alpha$-sub-exponential tail decay whenever the Hilbert--Schmidt norms are not too large. Moreover, the tail decay is ``as expected'', i.\,e. one just needs to account for the total degree $D$ by taking the $D$-th root.

One particularly interesting case is when the functional under consideration is a $d$-th order chaos. That is, given a $d$-tensor $A = (a_{i_1 \ldots i_d})$ which we assume to be symmetric, i.\,e. $a_{i_1 \ldots i_d} = a_{i_{\sigma(1)} \ldots i_{\sigma(d)}}$ for any permutation $\sigma \in \mathcal{S}_d$, we consider the polynomial
\begin{equation}		\label{eqn:chaos}
f_{d,A}(X) \coloneqq f_{d,A}(X_1, \ldots, X_n) \coloneqq \sum_{i_1,\ldots,i_d} a_{i_1 \ldots i_d} (X_{i_1}-\mathbb{E}X_{i_1}) \cdots (X_{i_d}-\mathbb{E}X_{i_d}).
\end{equation}
Additionally, we often assume that $A$ has vanishing generalized diagonal in the sense that $a_{i_1 \ldots i_d} = 0$ whenever $i_1, \ldots, i_d$ are not pairwise different. In this situation, Theorem \ref{wHSn} reads as follows:

\begin{korollar}		\label{wHSnc}
Let $X_1, \ldots, X_n$ be independent random variables with $\norm{X_i}_{\Psi_{\alpha}} \le M$ for some $\alpha \in (0,1] \cup \{2\}$ and let $A$ be a symmetric $d$-tensor with vanishing generalized diagonal such that $\lVert A \rVert_{\mathrm{HS}} \le 1$. Then 
\[
\mathbb{E} \exp\left(\frac{C_{d,\alpha}}{M^\alpha}|f_{d,A}(X)|^{\frac{\alpha}{d}} \right) \le 2.
\]
\end{korollar}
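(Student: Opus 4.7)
The plan is to apply Theorem \ref{wHSn} directly with $f = f_{d,A}$, which is a polynomial of total degree $D = d$: since $A$ has vanishing generalized diagonal, each surviving monomial uses $d$ pairwise distinct variables, each to the first power. The bulk of the work is to show that the only Hilbert--Schmidt norm in the minimum of Theorem \ref{wHSn} that is not forced to vanish is the one corresponding to $k = d$, and that it is bounded by an absolute constant depending on $d$ alone.

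First I would compute $\mathbb{E} f_{d,A}^{(k)}(X)$ for $1 \le k \le d$. Setting $Y_i \coloneqq X_i - \mathbb{E} X_i$, multilinearity and the symmetry of $A$ give
\begin{equation*}
\partial_{j_1}\cdots\partial_{j_k} f_{d,A}(X) \;=\; \frac{d!}{(d-k)!} \sum_{i_{k+1},\ldots,i_d} a_{j_1\ldots j_k i_{k+1}\ldots i_d}\, Y_{i_{k+1}}\cdots Y_{i_d}.
\end{equation*}
For $k < d$ the vanishing-diagonal assumption forces the indices $j_1,\ldots,j_k,i_{k+1},\ldots,i_d$ to be pairwise distinct, so the factors $Y_{i_{k+1}},\ldots,Y_{i_d}$ are independent centered random variables and the expectation vanishes; hence $\mathbb{E} f_{d,A}^{(k)}(X) = 0$ for $k < d$, and in particular $\mathbb{E} f_{d,A}(X) = 0$. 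For $k = d$ the derivative is the constant tensor $d!\,A$, so $\lVert \mathbb{E} f_{d,A}^{(d)}(X)\rVert_{\mathrm{HS}} = d!\,\lVert A\rVert_{\mathrm{HS}} \le d!$.

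Substituting into the main tail bound of Theorem \ref{wHSn}, every term in the minimum with $k < d$ equals $+\infty$, so only the term $k = d$ contributes and one obtains
\begin{equation*}
\mathbb{P}\bigl(\lvert f_{d,A}(X)\rvert \ge t\bigr) \;\le\; 2\exp\Bigl(-\frac{1}{C_{d,\alpha}} \bigl(t/(d!\, M^d)\bigr)^{\alpha/d}\Bigr).
\end{equation*}
Absorbing the factor $d!$ into the constant and integrating this tail by the standard layer-cake identity for the exponential Orlicz norm yields $\mathbb{E}\exp\bigl(C_{d,\alpha} M^{-\alpha} \lvert f_{d,A}(X)\rvert^{\alpha/d}\bigr) \le 2$, which is the statement of the corollary. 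Equivalently one can apply the ``in particular'' clause of Theorem \ref{wHSn} to the rescaled tensor $A/d!$, whose $d$-th expected derivative has Hilbert--Schmidt norm $\le 1$, and then undo the rescaling.

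I do not anticipate any real obstacle: the only nontrivial step is verifying that the vanishing-diagonal plus centering conditions collapse the minimum to a single term, after which the conclusion is an immediate instance of Theorem \ref{wHSn} together with the standard equivalence between $\alpha$-sub-exponential tail bounds and $\Psi_\alpha$ Orlicz-norm bounds discussed in Appendix \ref{section:OrliczNorms}.
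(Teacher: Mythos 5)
Your proposal is correct and follows exactly the route the paper intends: the paper offers no separate argument, declaring Corollary \ref{wHSnc} an immediate consequence of Theorem \ref{wHSn}, and your computation of the expected derivatives (vanishing for $k<d$ by centering and the diagonal-free assumption, equal to $d!\,A$ for $k=d$) together with the rescaling by $d!$ supplies precisely the missing details.
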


As in Theorem \ref{wHSn}, the conclusion is equivalent to a $\Psi_{\alpha/d}$-norm estimate.

\subsection{Main results}
In comparison to the aforementioned results, our main concentration inequalities provide more refined tail estimates. To this end, we need a family of tensor-product matrix norms $\lVert A \rVert_\mathcal{J}$ for a $d$-tensor $A$ and a partition $\mathcal{J} \in P_{qd}$ of $\{1, \ldots, qd \}$. For the exact definitions, we refer to \eqref{normgen}. Using these norms, we may formulate our first result for chaos-type functionals. Note that we focus on the case $\alpha = 2/q$ for some $q \in \IN$ only, which is sufficient for many applications, like products or powers of sub-Gaussian or sub-exponential random variables. The general case $\alpha \in (0,1]$ will be treated later.

\begin{satz}		\label{thmch}
Let $X_1, \ldots, X_n$ be a set of independent random variables satisfying $\lVert X_i \rVert_{\psi_{2/q}}\linebreak[3] \le M$ for some $q \in \mathbb{N}$ and $M > 0$, and let $A$ be a symmetric $d$-tensor with vanishing diagonal. Consider $f_{d,A}(X)$ as in \eqref{eqn:chaos}. Then, for any $t > 0$,
\begin{equation}	\label{eqn:Theorem11ConcentrationInequality}
\mathbb{P} (|f_{d,A}(X)| \ge t) \le 2 \exp\Big(- \frac{1}{C_{d,q}} \min_{\mathcal{J} \in P_{qd}} \Big(\frac{t}{M^d \lVert A \rVert_\mathcal{J}} \Big)^{2/|\mathcal{J}|}\Big).
\end{equation}
\end{satz}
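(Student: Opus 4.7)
The plan is to follow the classical moments-to-tails strategy. The first goal is to establish a moment bound of the form
\begin{equation*}
\lVert f_{d,A}(X) \rVert_p \le C_{d,q}\, M^d \sum_{\mathcal{J} \in P_{qd}} p^{|\mathcal{J}|/2} \lVert A \rVert_\mathcal{J} \qquad \text{for all } p \ge 2,
\end{equation*}
and then to convert it into the claimed tail inequality. The conversion step is routine: if $\lVert Z \rVert_p \le \sum_{\mathcal{J}} c_\mathcal{J}\, p^{|\mathcal{J}|/2}$ for all $p \ge 2$, then Chebyshev's inequality combined with an optimization in $p$ yields $\mathbb{P}(|Z| \ge t) \le 2\exp(-\frac{1}{C_{d,q}} \min_{\mathcal{J}} (t/c_\mathcal{J})^{2/|\mathcal{J}|})$, which is exactly the form of \eqref{eqn:Theorem11ConcentrationInequality}.

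The heart of the argument is thus the moment bound. The key equivalence to exploit is that $\lVert X_i \rVert_{\psi_{2/q}} \le M$ is equivalent, up to constants depending only on $q$, to the moment growth $\lVert X_i \rVert_p \le C_q\, M\, p^{q/2}$ for all $p \ge 1$. This is exactly the moment growth of a polynomial of total degree $q$ in independent standard Gaussians. One natural approach is therefore to dominate each centered variable $X_i - \mathbb{E}X_i$, moment-wise, by a suitable polynomial $P_q^{(i)}(g_i^{(1)}, \ldots, g_i^{(q)})$ of degree $q$ in independent standard Gaussians. After substitution, the $d$-th order chaos $f_{d,A}(X)$ becomes, for the purpose of $L^p$-estimates, a Gaussian chaos of total order $qd$ associated with a symmetric $qd$-tensor $\tilde A$ obtained by a canonical $q$-fold lifting of $A$.

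To the resulting Gaussian chaos of order $qd$ I would then apply Latala's moment inequality from \cite{La06} (or, equivalently, the sub-Gaussian chaos moment bound of \cite{AW15}), which delivers an estimate of the shape $\lVert f_{qd, \tilde A}(g) \rVert_p \le C_{d,q} \sum_{\mathcal{J} \in P_{qd}} p^{|\mathcal{J}|/2} \lVert \tilde A \rVert_\mathcal{J}$. This is precisely why the partitions in Theorem~\ref{thmch} range over $P_{qd}$ rather than $P_d$. What then remains is to identify the lifted tensor norms $\lVert \tilde A \rVert_\mathcal{J}$ with the norms $\lVert A \rVert_\mathcal{J}$ of \eqref{normgen} up to $(d,q)$-dependent constants.

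The principal technical obstacle will be the combinatorial bookkeeping around this lifting step: the Gaussian representation introduces $q$ auxiliary indices for each original index, and one must verify that the tensor-product norms defining $\lVert A \rVert_\mathcal{J}$ in \eqref{normgen} are \emph{exactly} the norms arising when Latala's theorem is applied to $\tilde A$. A secondary subtlety will be ensuring that the Gaussian domination together with the vanishing-diagonal assumption on $A$ prevents lower-order cross terms—produced by centering and by $g^2 - 1$ type corrections in the Gaussian representation—from polluting the estimate; absorbing these into lower-order partitions is where one expects small combinatorial overheads to appear.
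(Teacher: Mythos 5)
Your proposal follows essentially the same route as the paper: compare the moments of the $\psi_{2/q}$ variables coordinate-wise to products of $q$ independent standard Gaussians (the paper's Lemma \ref{comp}, taken from Adamczak--Wolff), apply Lata{\l}a's moment inequality (Theorem \ref{Lat}) to the lifted $qd$-tensor $e_q(A)$, and convert the resulting $L^p$-bounds into tails (Proposition \ref{normtoconc}). The only ingredients you leave implicit are the decoupling and symmetrization steps needed before the one-dimensional Gaussian comparison can be iterated over the coordinates of the chaos; note also that since the comparison is to a product $g_{i,1}\cdots g_{i,q}$ of \emph{independent} Gaussians rather than to powers of a single Gaussian, no $g^2-1$-type corrections arise, and the norms $\lVert A\rVert_{\mathcal{J}}$ are by definition \eqref{normgen} exactly the norms of the lifted tensor, so no further identification is required.
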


To give an elementary example, consider the case $d=1$ and $q=2$. Here, $A = a = (a_1, \ldots, a_n)$ is a vector, and $f_{1,A}(X) = \sum_{i=1}^{n} a_i(X_i - \IE X_i)$ is just a linear functional of random variables with sub-exponential tails ($\lVert X_i \rVert_{\psi_1} \le M$). It easily follows from the definition that $\lVert A \rVert_{\{1,2\}} = |a|$ (i.\,e. the Euclidean norm of $a$) and $\lVert A \rVert_{\{\{1\},\{2\}\}} = \max_i |a_i|$. As a consequence, for any $t > 0$
$$\mathbb{P}\Big( \big\lvert \sum_{i = 1}^n a_i (X_i - \IE X_i) \big\rvert \ge t\Big) \le 2 \exp\Big(- \frac{1}{C} \min\Big(\frac{t^2}{M^2|a|^2}, \frac{t}{M\max_i|a_i|}\Big)\Big).$$
Hence, up to constants, we get back a classical result for the tails of a linear form in random variables with sub-exponential tails. For more general functions $f$ and similar results under a Poincar{\'e}-type inequality, we refer to \cite{BL97} (the first order case) and \cite{GS18} (the higher order case).

Moreover, Theorem \ref{thmch} can be used to give Hanson--Wright-type bounds 
for quadratic forms in sub-exponential random variables. Here we provide a sharpened version of Proposition \ref{proposition:weakFormHansonWright}.
Let $\skal{x,y}$ be the standard scalar product in $\mathbb{R}^n$.

\begin{proposition}		\label{proposition:QuadraticFormWithDiagonal}
Let $q \in \IN$, $A = (a_{ij})$ be a symmetric $n \times n$ matrix and let $X_1, \ldots, X_n$ be a set of independent, centered random variables with $\norm{X_i}_{\Psi_{2/q}} \le M$ and $\IE X_i^2 = \sigma_i^2$. For any $t > 0$
\begin{align*}
\mathbb{P}\Big(\big \lvert \sum_{i,j} a_{ij} X_i X_j - \sum_{i = 1}^n \sigma_i^2 a_{ii} \big \rvert \ge t \Big) \le 2 \exp \Big( - \frac{1}{C} \eta(A,q,t/M^2) \Big),
\end{align*}
where
{\scriptsize
\[
\eta(A,q,t) \coloneqq \min \left( \frac{t^2}{\norm{A}^2_{\mathrm{HS}}}, \frac{t}{\norm{A}_{\mathrm{op}}}, \Big( \frac{t}{\max_{i = 1,\ldots,n} \norm{(a_{ij})_j}_2} \Big)^{\frac{2}{q+1}}, \left( \frac{t}{\norm{A}_\infty} \right)^{\frac{1}{q}} \right).
\]}
Consequently, for any $x > 0$ we have with probability at least $1 - 2\exp(-x/C)$
{\scriptsize
\begin{equation*}
\abs{\skal{X,AX} - \IE \skal{X,AX}} \le M^2 \max\left( \sqrt{x}\norm{A}_{\mathrm{HS}}, x \norm{A}_{\mathrm{op}}, x^{\frac{q+1}{2}} \max_{i =1,\ldots,n} \norm{(a_{ij})_j}_2, x^{q} \norm{A}_\infty \right).
\end{equation*}
}
\end{proposition}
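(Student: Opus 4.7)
The plan is to decompose the quadratic form into its off-diagonal chaos part and its diagonal part, apply Theorem~\ref{thmch} to the former and a Bernstein-type bound for linear combinations of $\psi_{1/q}$ random variables to the latter, and then show that both contributions are controlled by the four norms appearing in $\eta$. Concretely, since $\mathbb{E} X_i = 0$ and $\mathbb{E} X_i^2 = \sigma_i^2$, one writes
\[
\sum_{i,j} a_{ij} X_i X_j - \sum_i \sigma_i^2 a_{ii}
= \underbrace{\sum_{i\neq j} a_{ij} X_i X_j}_{=: S_1} + \underbrace{\sum_i a_{ii} (X_i^2 - \sigma_i^2)}_{=: S_2},
\]
and uses $\mathbb{P}(|S_1+S_2|\ge t) \le \mathbb{P}(|S_1|\ge t/2) + \mathbb{P}(|S_2|\ge t/2)$.

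For $S_1$, let $\tilde A$ be $A$ with zeroed diagonal. Then $S_1 = f_{2,\tilde A}(X)$ in the sense of \eqref{eqn:chaos}, so Theorem~\ref{thmch} yields a tail of the form $2\exp(-C_q^{-1} \min_{\mathcal{J}\in P_{2q}}(t/(M^2\|\tilde A\|_\mathcal{J}))^{2/|\mathcal{J}|})$. The combinatorial heart of the proof is to identify, for each partition cardinality $k \in \{1, \ldots, 2q\}$, a representative partition giving the smallest norm and then to argue that only the four cardinalities $k\in\{1,2,q+1,2q\}$ contribute. The trivial partition ($k=1$) gives $\|\tilde A\|_{\mathrm{HS}}$; a balanced two-block partition (grouping the ``row'' and ``column'' indices) gives $\|\tilde A\|_{\mathrm{op}}$; a partition consisting of one block of size $q$ together with $q$ singletons gives $\max_i\|(\tilde a_{ij})_j\|_2$; and the all-singleton partition gives $\|\tilde A\|_\infty$. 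For every other partition, a H\"older/interpolation inequality between these four extreme norms should show that the corresponding term is dominated (up to an absolute constant in $C_q$) by one of the four. Finally, one uses $|a_{ii}| \le \|A\|_{\mathrm{op}}$ (valid for symmetric $A$) to replace $\tilde A$ by $A$ in each of the four norms at the price of an absolute constant.

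For $S_2$, the pointwise identity $\exp(|X|^{2/q}/M^{2/q}) = \exp(|X^2|^{1/q}/(M^2)^{1/q})$ gives $\|X_i^2\|_{\psi_{1/q}}\le M^2$, hence by the quasi-triangle inequality $\|X_i^2-\sigma_i^2\|_{\psi_{1/q}} \le C M^2$ (see Appendix~\ref{section:OrliczNorms}). A Bernstein-type bound for sums of independent centered $\psi_{1/q}$ random variables (a $d=1$, $\alpha \in (0,1]$ analogue of Theorem~\ref{thmch}) then yields
\[
\mathbb{P}(|S_2|\ge t) \le 2\exp\Big(-C_q^{-1}\min\Big(\tfrac{t^2}{M^4\|(a_{ii})_i\|_2^2},\,\big(\tfrac{t}{M^2\max_i|a_{ii}|}\big)^{1/q}\Big)\Big),
\]
and the trivial estimates $\|(a_{ii})_i\|_2 \le \|A\|_{\mathrm{HS}}$ and $\max_i|a_{ii}| \le \|A\|_\infty$ show that this bound is absorbed into the first and fourth terms of $\eta(A,q,t/M^2)$. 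The high-probability reformulation follows by solving $\eta(A,q,t/M^2)\ge x$ separately on each of the four regimes: each of $\sqrt{x}\|A\|_{\mathrm{HS}},\,x\|A\|_{\mathrm{op}},\,x^{(q+1)/2}\max_i\|(a_{ij})_j\|_2,\,x^q\|A\|_\infty$ corresponds to one regime, and their maximum exceeds all four simultaneously.

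The main obstacle is the combinatorial step for $S_1$, namely showing that among all partitions in $P_{2q}$ only the four explicit ones contribute to the minimum. This requires unpacking the definition of $\|\cdot\|_\mathcal{J}$ from \eqref{normgen} carefully and establishing the relevant interpolation inequalities between the four extremal norms, which becomes increasingly delicate as $q$ grows.
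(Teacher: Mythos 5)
Your overall route is the paper's: the same split into off-diagonal and diagonal parts, Theorem \ref{thmch} applied to the off-diagonal chaos, a $d=1$ bound for the diagonal sum via $\norm{X_i^2}_{\psi_{1/q}} \le M^2$, and the replacement of $A^{\mathrm{od}}$ by $A$ using $\norm{A^{\mathrm{od}}}_{\mathrm{op}} \le 2\norm{A}_{\mathrm{op}}$; the four representative partitions you name (including the block of size $q$ plus $q$ singletons for the row norm) are exactly the right ones. The gap is in what you yourself flag as the combinatorial heart, and the mechanism you propose for closing it is not the right one. There is no H\"older/interpolation inequality to establish: the paper shows (Lemmas \ref{lemma:1...qdnorm} and \ref{lemma:restlichenormen}, via the ``projection'' $\tilde{\mathcal{J}}$ of \eqref{eqn:setstildeJ}) that for $d=2$ \emph{every} norm $\norm{A}_{\mathcal{J}}$, $\mathcal{J} \in P_{2q}$, is \emph{exactly equal} to one of $\norm{A}_{\mathrm{HS}}$, $\norm{A}_{\mathrm{op}}$, $\max_i\norm{(a_{ij})_j}_2$, $\norm{A}_\infty$, so the whole task is an exact case analysis of how the blocks of $\mathcal{J}$ meet the two $q$-blocks. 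Moreover, your claim that only the cardinalities $k\in\{1,2,q+1,2q\}$ contribute is not literally true: for $2\le k\le q$ there exist partitions whose norm is $\max_i\norm{(a_{ij})_j}_2$ but whose exponent $2/k$ exceeds $2/(q+1)$, and these terms are \emph{not} pointwise dominated by any single one of the four displayed terms. The paper disposes of them by noting that one may assume $t \ge \norm{A}_{\mathcal{J}}$ for every $\mathcal{J}$ (otherwise the Hilbert--Schmidt term already realizes the minimum, since $\norm{A}_{\mathrm{HS}}$ is the largest of the norms), and under that assumption the largest admissible cardinality, i.e.\ the smallest exponent, is the worst --- this is precisely where $2/(q+1)$ and $1/q$ come from, and the same observation is needed to collapse the intermediate exponents in your diagonal bound. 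With Lemma \ref{lemma:restlichenormen} and this monotonicity-in-the-exponent argument supplied, the remainder of your proposal goes through as in the paper.
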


It is possible to replace $2/q$ by a general $\alpha \in (0,1] \cup \{ 2 \}$ (see Section \ref{section:alpha}). In this case, we have to replace $2/(q+1)$ by $2\alpha/(2+\alpha)$ and $1/q$ by $\alpha/2$.

\begin{bemerkung*}
Note that in comparison to the Hanson--Wright inequality \eqref{eqn:HansonWright} and Proposition \ref{proposition:weakFormHansonWright}, the more refined version contains two additional terms. The respective norms $\max_{i = 1,\ldots,n} \norm{(a_{ij})_j}_2$ and $\norm{A}_\infty$ can no longer be written in terms of the eigenvalues of $A$ (in contrast to $\norm{A}_{\mathrm{HS}}$ and $\norm{A}_{\mathrm{op}}$). Indeed, as we see later, we have $\max_{i = 1, \ldots, n} \norm{(a_{ij})_j}_2 = \norm{A}_{2 \to \infty}$, and $\norm{A}_\infty = \max_{i,j} \abs{\skal{e_i, Ae_j}}$ for the standard basis $(e_i)_i$ of $\mathbb{R}^n$. Moreover, the norms might have a very different scaling in $n$. For example, if $e = (1,\ldots,1)$ and $A = ee^T - \mathrm{Id}$, then $\norm{A}_{\mathrm{HS}} \sim \norm{A}_{\mathrm{op}} \sim n$, $\max_i \norm{(a_{ij})_j}_2 \sim n^{1/2}$ and $\norm{A}_\infty = 1$.
\end{bemerkung*}

Finally, let us state the result for general polynomials in random variables with bounded Orlicz norms. To fix some notation, if $f \colon \mathbb{R}^n \to \mathbb{R}$ is a function in $\mathcal{C}^D(\mathbb{R}^n)$, for $d \le D$ we denote by $f^{(d)}$ the (symmetric) $d$-tensor of its $d$-th order partial derivatives.

\begin{satz}\label{thmpol}
Let $X_1, \ldots, X_n$ be a set of independent random variables satisfying $\lVert X_i \rVert_{\psi_{2/q}}\linebreak[3] \le M$ for some $q \in \mathbb{N}$ and $M > 0$. Let $f \colon \mathbb{R}^n \to \mathbb{R}$ be a polynomial of total degree $D \in \mathbb{N}$. Then, for any $t > 0$,
$$\mathbb{P} (|f(X) - \mathbb{E}f(X)| \ge t) \le 2 \exp\Big(- \frac{1}{C_{D,q}} \min_{1 \le d \le D}\min_{\mathcal{J} \in P_{qd}} \Big(\frac{t}{M^d \lVert \mathbb{E} f^{(d)}(X) \rVert_\mathcal{J}} \Big)^{\frac{2}{\abs{\mathcal{J}}}} \Big).$$
\end{satz}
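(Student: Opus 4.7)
The plan is to deduce Theorem~\ref{thmpol} from the chaos bound Theorem~\ref{thmch} by decomposing the centred polynomial into a sum of chaoses and applying Theorem~\ref{thmch} to each. First, substituting $g(y) := f(y+\IE X)$ reduces matters to $\IE X_i = 0$: the Orlicz norms $\norm{X_i - \IE X_i}_{\psi_{2/q}}$ inflate only by an absolute constant (see Appendix~\ref{section:OrliczNorms}), and $\IE g^{(d)}(X - \IE X) = \IE f^{(d)}(X)$, so the right-hand side of the theorem is unchanged.

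Next, I would apply Taylor's formula (exact for polynomials of degree $\le D$),
\[
f(X) - \IE f(X) \;=\; \sum_{d=1}^{D}\frac{1}{d!}\bigl(\langle f^{(d)}(0), X^{\otimes d}\rangle \;-\; \IE \langle f^{(d)}(0), X^{\otimes d}\rangle\bigr),
\]
and for each summand split the contraction by the equivalence classes of the index tuple $(i_1,\ldots,i_d)$: the ``off-diagonal'' part (pairwise distinct indices) is a chaos of order $d$ with vanishing generalised diagonal, to which Theorem~\ref{thmch} applies directly; the ``diagonal'' remainder consists of monomials with repeated indices. Handle the diagonal remainder either by decoupling (Pe\~na--Gin\'e), replacing $X_i^k$ by $X_i^{(1)}\cdots X_i^{(k)}$ from independent copies, or by iterative centering that pushes its content into lower-order chaoses. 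Either realisation yields
\[
f(X) - \IE f(X) \;=\; \sum_{d=1}^{D} h_d(X), \qquad h_d(X) \;=\; \sum_{i_1,\ldots,i_d} B^{(d)}_{i_1,\ldots,i_d}\, X_{i_1}\cdots X_{i_d},
\]
with each $B^{(d)}$ symmetric and vanishing on the generalised diagonal. Theorem~\ref{thmch} applied to each $h_d$ at threshold $t/D$, together with a union bound over $d = 1,\ldots,D$, then yields the claimed estimate --- provided $\norm{B^{(d)}}_\mathcal{J} \le C_{D,q}\,\norm{\IE f^{(d)}(X)}_\mathcal{J}$ holds for every $d$ and every partition $\mathcal{J} \in P_{qd}$.

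The main obstacle is precisely this identification of $B^{(d)}$ with the ``correct'' tensor. Taylor-expanding $f^{(d)}$ about $0$ and taking expectation gives the key algebraic identity
\[
\IE f^{(d)}(X) \;=\; \sum_{e \ge d} \frac{1}{(e-d)!}\, \bigl\langle f^{(e)}(0),\, \IE X^{\otimes(e-d)}\bigr\rangle_{\text{partial}},
\]
whose structure matches the combination that accumulates into $B^{(d)}$ through the diagonal-reduction steps (each contribution from $f^{(e)}(0)$ with $e > d$ carries a mixed moment of $X$, which is bounded by $C_{k,q} M^k$ since $\norm{X_i}_{\psi_{2/q}} \le M$). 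The hard part is tracking these contributions across the three layers of decomposition --- Taylor expansion, off-diagonal/diagonal split, and decoupling or iterative centering --- and verifying that $B^{(d)}$ collapses, uniformly in every norm $\norm{\cdot}_\mathcal{J}$, to a bounded multiple of $\IE f^{(d)}(X)$. Once this algebraic bookkeeping is done, Theorem~\ref{thmch} supplies the entire analytic content of the concentration bound.
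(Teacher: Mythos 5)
Your overall strategy---decompose the centred polynomial into chaos-like pieces, feed each piece into the multilinear result, and then compare the resulting coefficient tensors to $\mathbb{E}f^{(d)}(X)$---is the same skeleton as the paper's proof. But two of your steps do not work as described, and they are precisely where the actual difficulty lies. First, the diagonal part cannot be turned into a multilinear chaos with vanishing generalized diagonal in the variables $X_i$. Decoupling in the sense of Pe\~na--Gin\'e replaces $X_{i_1}\cdots X_{i_d}$ with \emph{distinct} indices by independent copies; it does not allow you to replace a power $X_i^k$ by $X_i^{(1)}\cdots X_i^{(k)}$ (already $\mathbb{E}X_i^2 \ne \mathbb{E}X_i^{(1)}X_i^{(2)}$), and iterative centering of $X_i^k$ produces $X_i^k - \mathbb{E}X_i^k$, which is still a nonlinear function of a single coordinate, not a lower-order multilinear form. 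Consequently there is no representation $f(X)-\mathbb{E}f(X)=\sum_d h_d(X)$ with each $h_d$ an off-diagonal chaos of order $d$ in the $X_i$, and Theorem \ref{thmch} cannot be applied as a black box. The paper instead keeps the terms $(X_{i_1}^{k_1}-\mathbb{E}X_{i_1}^{k_1})\cdots(X_{i_\nu}^{k_\nu}-\mathbb{E}X_{i_\nu}^{k_\nu})$, observes $\lVert X_i^{k}\rVert_{\psi_{2/(qk)}} = \lVert X_i\rVert_{\psi_{2/q}}^{k}$, and reruns the \emph{proof} of Theorem \ref{thmch}, applying Lemma \ref{comp} with parameter $qk_j$ in the $j$-th slot; since $\sum_j k_j = d$, the Gaussian comparison still lands in $P_{qd}$ and produces the exponents $2/|\mathcal{J}|$ with $\mathcal{J}\in P_{qd}$. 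A direct application of Theorem \ref{thmch} to a chaos of order $\nu<d$ in the variables $X_i^{k_j}$ would give the wrong partition lattice $P_{q\nu}$ and hence the wrong exponents.

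Second, the inequality $\lVert B^{(d)}\rVert_{\mathcal{J}} \le C_{D,q}\lVert \mathbb{E}f^{(d)}(X)\rVert_{\mathcal{J}}$, which you flag as ``algebraic bookkeeping,'' is in fact the bulk of the proof and does not hold term by term in the form you state. One has $\mathbb{E}\partial^d f(X) = \nu!\,l_1!\cdots l_\nu!\, a_{\mathbf{i}} + R^{(d)}_{\mathbf{i}}$, where the remainder collects contributions from higher-degree coefficients weighted by moments of $X$; so the coefficient tensor $A_d$ is only recovered from $\mathbb{E}f^{(d)}(X)$ up to terms that must themselves be controlled by the norms $\lVert A_k\rVert_{\mathcal{K}}$ for $k>d$ with $|\mathcal{K}|=|\mathcal{J}|$ (so that the factor $p^{|\mathcal{J}|/2}$ is preserved), and the argument proceeds by \emph{reverse induction} from $d=D$ (where $R^{(D)}=0$) downward. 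Establishing the remainder bound requires the stability of the norms $\lVert\cdot\rVert_{\mathcal{J}}$ under Hadamard multiplication by indicators of level sets and by bounded rank-one tensors (Lemmas \ref{uncondlemma}, \ref{lemLK} and \ref{Absch}), together with an explicit embedding of the $d$-tensor remainders into $|\mathbf{k}|$-tensors and a matching construction of partitions. None of this is supplied or even reducible to a routine computation, so as it stands the proposal is an outline of the correct strategy rather than a proof.
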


Note that if $f(X) = f_{D,A}(X)$ as in \eqref{eqn:chaos}, only the $D$-th order tensor gives a contribution, i.\,e. we retrieve Theorem \ref{thmch}. We discuss Theorems \ref{thmch} and \ref{thmpol} and compare them to known results in Subsection \ref{section:relatedwork}. A variant of Theorem \ref{thmpol} for polynomials in independent random variables with $\norm{X_i}_{\psi_{\alpha}} \le 1$ for any $\alpha \in (0,1]$ with be derived in Section \ref{section:alpha}.

\begin{bemerkung*}
With the help of these inequalities, it is possible to prove many results on concentration of linear and quadratic forms in independent random variables scattered throughout the literature. For example, \cite[Lemma A.6]{NSU17} is an immediate consequence of Theorem \ref{thmch} (combined with Lemma \ref{lemma:HoelderTypeInequality} for $f(X,X') = \sum_{i = 1}^n a_i X_i X_i'$). In a similar way, one can deduce \cite[Lemma C.4]{YangEtAl17} by applying Theorem \ref{thmch} to the random variable $Z_i \coloneqq X_iY_i$, whenever $(X_i, Y_i)$ is a vector with sub-exponential marginal distributions. More generally, one can consider a linear form (or higher order polynomial chaoses) in a product of $k$ random variables $X_1,\ldots, X_k$ with sub-exponential tails, for which Lemma \ref{lemma:HoelderTypeInequality} provides estimates for the $\Psi_{\frac{1}{k}}$ norm.

Lastly, the results in \cite[Appendix B]{EYY12} can be sharpened for $\alpha \in (0,1] \cup \{2 \}$ by a more general version of Proposition \ref{proposition:QuadraticFormWithDiagonal}, using the same arguments as in \cite[Section 3]{RV13} to treat complex-valued matrices.
\end{bemerkung*}

\subsection{Related work}\label{section:relatedwork}
Inequalities for the $L^p$-norms of polynomial chaos have been established in various works. From these $L^p$ norm inequalities one can quite easily derive concentration inequalities. For a thorough discussion on inequalities involving linear forms in independent random variables we refer to \cite[Chapter 1]{PG99}.

Starting with linear forms, there have been generalizations to certain classes of random variables as well as multilinear forms of higher degree (also called polynomial chaoses). Among these are the two classes of random variables with either log-convex or log-concave tails (i.\,e. $t \mapsto -\log \mathbb{P}(\abs{X} \ge t)$ is convex respectively concave). Two-sided $L^p$ norm estimates for the log-convex case were derived in \cite{HMO97} for linear forms and in \cite{KL15} for chaoses of all orders. On the other hand, for measures with log-concave tails similar two-sided estimates have been derived in \cite{GK95, La96, La99, LL03, AL12} under different conditions. Moreover, two-sided estimates for non-negative random variables have been derived in \cite{Me16} and for chaos of order two in symmetric random variables satisfying the inequality $\norm{X}_{2p} \le A \norm{X}_p$ in \cite{Me17}. 

Our approach is closer to the work of Adamczak and Wolff, \cite{AW15}, where the case of polynomials in sub-Gaussian random variables has been treated. Lastly, let us mention the two results \cite[Lemma B.2, Lemma B.3]{EYY12} and \cite[Corollary 1.6]{VW15}, where concentration inequalities for quadratic forms in independent random variables with $\alpha$-sub-exponential tails have been proven.

To be able to compare our results to the results listed above, let us discuss their conditions. Firstly, the conditions of a bounded Orlicz norm and log-convex or log-concave tails cannot be compared in general. It is known that random variables with log-convex tails satisfy $\norm{X}_{\Psi_1} < \infty$. On the other hand, the tail function of any discrete random variable $X$ is a step function (for example, if $X$ has the geometric distribution, then $-\log \IP(X \ge t) = \lfloor x \rfloor \log(1/(1-p))$), which is neither log-convex nor log-concave but can still have a finite $\Psi_{\alpha}$ norm for some $\alpha$. For example, a Poisson-distributed random variable $X$ satisfies $\norm{X}_{\Psi_1} < \infty$. 

The condition $\norm{X}_{2p} \le \alpha \norm{X}_p$ for all $p \ge 1$ and some $\alpha > 1$ used in the works of Meller implies the existence of the $\Psi_{\tilde{\alpha}}$-norm for $\tilde{\alpha} \coloneqq (\log_2\alpha)^{-1}$.
Especially in the case $\alpha = 2^d$ this yields the existence of the $\Psi_{1/d}$ norm. However, we want to stress that the results in \cite{AL12, KL15, Me16, Me17} are two-sided and require very different tools.

Moreover, the two works of Schudy and Sviridenko \cite{SS12b, SS12a} contain concentration inequalities for polynomials in so-called \emph{moment bounded random} variables. Therein, a random variable $Z$ is called moment bounded with parameter $L > 0$, if for all $i \ge 1$
$
\IE \abs{Z}^i \le i L \IE \abs{Z}^{i-1}.
$
Actually, using Stirling's formula, it is easy to see that moment-boundedness implies $\norm{Z}_{\Psi_1} < \infty$, but it is not clear whether the converse implication also holds. However, there is no inequality of the form $L \le C \norm{X}_{\Psi_1}$, as can be seen by $X \sim \mathrm{Ber}(p)$.

Considering quadratic forms in random variables $X$ which are moment bounded and centered, one can easily see that (apart from the constants) the bound in Proposition \ref{proposition:QuadraticFormWithDiagonal} is sharper than the corresponding inequality in \cite[Theorem 1.1]{SS12a}. Since for log-convex distributions there are two-sided estimates, Proposition \ref{proposition:QuadraticFormWithDiagonal} is sharp in this class. Apart from quadratic forms, due to the different conditions and quantities, it is difficult to compare \cite{SS12a} and Theorem \ref{thmpol} in general.

\subsection{Outline}
In Section \ref{section:applications} we formulate and prove several applications which can be deduced from the main results. Section \ref{section:ProofThm13} contains the proof for the concentration inequalities for multilinear forms (Theorem \ref{thmch}). Thereafter, we provide the proof of Proposition \ref{proposition:QuadraticFormWithDiagonal} in Section \ref{section:QuadraticForms} and of Theorem \ref{thmpol} in Section \ref{section:ProofThm16}. Section \ref{section:alpha} is devoted to some extensions of the main results for random variables with finite Orlicz-norms for any $\alpha \in (0,1]$. Lastly, we finish this note by collecting some elementary properties of the Orlicz-norms in the Appendix \ref{section:OrliczNorms}.

\section{Applications}\label{section:applications}
In the following, we provide some applications of our main results. In particular, all the results in this section follow from either Proposition \ref{proposition:weakFormHansonWright} or \ref{proposition:QuadraticFormWithDiagonal}. For any random variables $X_1, \ldots, X_n$ we write $X = (X_1, \ldots, X_n)$.

\subsection{Concentration of the Euclidean norm of a vector with independent components}
As a start, Proposition \ref{proposition:weakFormHansonWright} can be used to give concentration properties of the Euclidean norm of a linear transformation of $X$ consisting of independent, normalized random variables with sub-exponential tails. We give two different forms thereof. The first form is inspired by the results in \cite{RV13} for sub-Gaussian random variables.

\begin{proposition}\label{proposition:EuclideanNormVector}
Let $X_1, \ldots, X_n$ be independent random variables satisfying $\IE X_i = 0, \IE X_i^2 = 1, \norm{X_i}_{\Psi_\alpha} \le M$ for some $\alpha \in (0,1] \cup \{2 \}$ and let $B \neq 0$ be an $m \times n$ matrix. For any $c > 0$ and any $t \ge c \norm{B}_{\mathrm{HS}}$ we have
\begin{equation}
	\IP \Big( \abs{\norm{BX}_2 - \norm{B}_{\mathrm{HS}}} \ge t \Big) \le 2 \exp \Big( - \frac{\min(c^{2-\alpha},1)}{C M^4 \norm{B}_{\mathrm{op}}^\alpha} t^\alpha \Big).
\end{equation}
\end{proposition}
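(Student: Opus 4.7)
The approach is to reduce concentration of $\norm{BX}_2$ to a Hanson--Wright bound for the quadratic form $\norm{BX}_2^2 = \skal{X, AX}$ with $A \coloneqq B^T B$, to which Proposition \ref{proposition:weakFormHansonWright} applies directly.

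The first step is the standard quadratic delinearisation. Since $\IE\norm{BX}_2^2 = \mathrm{tr}(A) = \norm{B}_{\mathrm{HS}}^2$ and the factorisation $\norm{BX}_2^2 - \norm{B}_{\mathrm{HS}}^2 = (\norm{BX}_2 - \norm{B}_{\mathrm{HS}})(\norm{BX}_2 + \norm{B}_{\mathrm{HS}})$ combines with the elementary bound $\norm{BX}_2 + \norm{B}_{\mathrm{HS}} \ge \max(\norm{B}_{\mathrm{HS}}, \abs{\norm{BX}_2 - \norm{B}_{\mathrm{HS}}})$, the event $\{\abs{\norm{BX}_2 - \norm{B}_{\mathrm{HS}}} \ge t\}$ is contained in $\{\abs{\norm{BX}_2^2 - \norm{B}_{\mathrm{HS}}^2} \ge \max(t^2, t\norm{B}_{\mathrm{HS}})\}$. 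I then apply Proposition \ref{proposition:weakFormHansonWright} to $A = B^T B$ at this deviation level, using $\norm{A}_{\mathrm{op}} = \norm{B}_{\mathrm{op}}^2$ and, via the singular values $s_i(B)$ of $B$, $\norm{A}_{\mathrm{HS}}^2 = \sum_i s_i(B)^4 \le \norm{B}_{\mathrm{op}}^2 \norm{B}_{\mathrm{HS}}^2$.

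It remains to simplify the resulting Hanson--Wright minimum to the form stated in the proposition. I would split into the regimes $t \ge \norm{B}_{\mathrm{HS}}$ (where $\max(t^2, t\norm{B}_{\mathrm{HS}}) = t^2$) and $c\norm{B}_{\mathrm{HS}} \le t < \norm{B}_{\mathrm{HS}}$ (only possible when $c < 1$, with $\max = t\norm{B}_{\mathrm{HS}}$), and verify in each that both terms inside the minimum dominate the target quantity $\min(c^{2-\alpha},1)\, t^\alpha / (CM^4 \norm{B}_{\mathrm{op}}^\alpha)$. The two key monotonicity inequalities are $(t/\norm{B}_{\mathrm{op}})^{2-\alpha} \ge c^{2-\alpha}$, which follows from $t \ge c\norm{B}_{\mathrm{HS}} \ge c\norm{B}_{\mathrm{op}}$ and accounts for the factor $c^{2-\alpha}$ in the small-$t$ regime, and $\norm{B}_{\mathrm{HS}} \ge t$ in the second regime, which upgrades $(t\norm{B}_{\mathrm{HS}})^{\alpha/2}$ to at least $t^\alpha$.

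No single step is analytically deep; the principal obstacle is simply keeping the four (regime, Hanson--Wright term) comparisons in order and reconciling the $M^\alpha$ appearing in the operator-norm term of Proposition \ref{proposition:weakFormHansonWright} with the $M^4$ in the target denominator. The hypothesis $\IE X_i^2 = 1$ forces $\norm{X_i}_{\psi_\alpha}$ to be bounded below by an $\alpha$-dependent constant, so one may assume $M \ge 1$ without loss of generality and absorb the $\alpha$-dependence into $C$.
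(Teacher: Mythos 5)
Your proposal is correct and follows essentially the same route as the paper's proof: apply Proposition \ref{proposition:weakFormHansonWright} to $A = B^TB$ with the bounds $\norm{A}_{\mathrm{HS}}^2 \le \norm{B}_{\mathrm{op}}^2\norm{B}_{\mathrm{HS}}^2$ and $\norm{A}_{\mathrm{op}} \le \norm{B}_{\mathrm{op}}^2$, delinearise via the factorisation of $\norm{BX}_2^2 - \norm{B}_{\mathrm{HS}}^2$ (the paper's $\abs{z-1}\le\min(\abs{z^2-1},\abs{z^2-1}^{1/2})$), use $t \ge c\norm{B}_{\mathrm{HS}} \ge c\norm{B}_{\mathrm{op}}$ for the factor $\min(c^{2-\alpha},1)$, and absorb the $M^\alpha$ versus $M^4$ mismatch via the lower bound $M \ge C_\alpha$ from $\IE X_i^2 = 1$. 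The only cosmetic difference is that the paper first normalises to $\norm{B}_{\mathrm{HS}} = 1$ rather than carrying out your explicit two-regime case split.
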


Note that in the case $\alpha = 2$ the constant is not present on the right hand side and thus we can choose any $t > 0$, which is exactly \cite[Theorem 2.1]{RV13}. In the general case, we need to restrict $t$ to be of the order $\norm{B}_{\mathrm{HS}}$. 

The assumption of unit variance can be weakened, with some minor modifications, i.\,e. $\norm{B}_{\mathrm{HS}}$ has to be replaced by $(\sum_{i = 1}^n \sigma_i^2 \sum_{j = 1}^n b_{ij}^2)^{1/2}$ and the constant $C$ will depend on $\min_{i =1,\ldots,n} \sigma_i^2$. We omit the details.

\begin{proof}
First off, note that it suffices to prove the inequality for a matrix $B$ such that $\norm{B}_{\mathrm{HS}} = 1$ and $t \ge c$, since the general case follows by considering $\tilde{B} \coloneqq B \norm{B}_{\mathrm{HS}}^{-1}$.

Let us apply Proposition \ref{proposition:weakFormHansonWright} to the matrix $A \coloneqq B^T B$. An easy calculation shows that $\mathrm{trace}(A) = \mathrm{trace}(B^T B) = \norm{B}_{\mathrm{HS}}^2 = 1$, so that we have
\begin{align}\label{eqn:eqn1}
\begin{split}
	\IP\Big( \abs{ \norm{BX}_2^2 - 1} \ge t \Big) &\le 2 \exp \Big( - \frac{1}{C M^4} \min \Big( \frac{t^2}{\norm{B}_{\mathrm{op}}^2}, \Big( \frac{t}{\norm{B}_{\mathrm{op}}^2}\Big)^{\frac{\alpha}{2}}\Big) \Big) \\
	&= 2 \exp \Big( - \frac{1}{CM^4 \norm{B}_{\mathrm{op}}^\alpha} \min \Big( \frac{t^{2-\alpha}}{\norm{B}^{2-\alpha}_{\mathrm{op}}} t^\alpha, t^{\frac \alpha 2} \Big)\Big) \\
	&\le 2 \exp \Big( -\frac{\min(c^{2-\alpha} t^\alpha, t^{\frac \alpha 2})}{CM^4 \norm{B}_{\mathrm{op}}^\alpha} \Big) \\
	&\le 2 \exp \Big( - \frac{\min(c^{2-\alpha},1)}{CM^4 \norm{B}_{\mathrm{op}}^\alpha} \min(t^\alpha, t^{\frac{\alpha}{2}}) \Big).
\end{split}
\end{align}
Here, in the first step we have used the estimates  $\norm{A}_{\mathrm{HS}}^2 \le \norm{B}_{\mathrm{op}}^2 \norm{B}_{\mathrm{HS}}^2 = \norm{B}_{\mathrm{op}}^2$ and $\norm{A}_{\mathrm{op}} \le \norm{B}_{\mathrm{op}}^2$ as well as the fact that by Lemma \ref{lemma:PsiAlphaAndGrowth}, $\mathbb{E} X_i^2=1$ for any $i$ implies $M \ge C_\alpha > 0$. The second inequality follows from $t \ge c \ge c \norm{B}_{\mathrm{op}}$ and the third inequality is a consequence of $\min(c^{2-\alpha}t^\alpha, t^{\frac \alpha 2}) \ge \min(c^{2-\alpha},1) \min(t^\alpha, t^{\frac \alpha 2}).$

Now, as in \cite{RV13}, we use the inequality $\abs{z - 1} \le \min( \abs{z^2 -1}, \abs{z^2 - 1}^{1/2})$, giving for any $t > 0$
\begin{equation}\label{eqn:eqn2}
\IP \Big( \abs{\norm{BX}_2 - 1} \ge t \Big) \le \IP \Big( \abs{\norm{BX}_2^2 - 1} \ge \max(t,t^2) \Big).
\end{equation}
Hence, a combination of \eqref{eqn:eqn1}, \eqref{eqn:eqn2} and $\min(\max(r,r^2), \max(r^{1/2},r)) = r$ yields for $t > c$
\begin{equation}
\IP \Big( \abs{\norm{BX}_2 - 1} \ge t \Big) \le 2 \exp\Big( - \frac{\min(c^{2-\alpha},1)}{CM^4 \norm{B}_{\mathrm{op}}^\alpha } t^\alpha \Big).
\end{equation}
\end{proof}

The next corollary provides an alternative estimate for $\norm{BX}_2$:

\begin{korollar}\label{corollary:ConcentrationNorm}
Let $X_1, \ldots, X_n$ be independent, centered random variables satisfying $\norm{X_i}_{\Psi_1} \le M$ and $\IE X_i^2 = \sigma_i^2$. For an $n \times n$ matrix $B$ with real entries let $A = B^T B = (a_{ij})$. Then, for any $x > 0$, with probability at least $1 - 2\exp(- x/C)$ we have
{\footnotesize
\[
\norm{BX}_2^2 \le \sum_{i = 1}^n \sigma_i^2 \sum_{j = 1}^n b_{ji}^2 + M^2 \max \Big( \sqrt{x} \norm{A}_{\mathrm{HS}}, x \norm{A}_{\mathrm{op}}, x^{3/2} \max_{i = 1,\ldots,n} \norm{(a_{ij})_j}_2, x^2 \norm{A}_\infty \Big).
\]
}
\end{korollar}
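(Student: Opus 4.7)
The plan is to reduce the statement to a direct application of Proposition \ref{proposition:QuadraticFormWithDiagonal}. The starting observation is the identity
\[
\norm{BX}_2^2 = \langle BX, BX\rangle = \langle X, B^T B X\rangle = \langle X, AX\rangle = \sum_{i,j} a_{ij} X_i X_j,
\]
so the quantity of interest is exactly a quadratic form of the type treated in Proposition \ref{proposition:QuadraticFormWithDiagonal}, with the matrix $A = B^T B$ (which is automatically symmetric).

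Next I would compute the centering term. Since the $X_i$ are independent and centered with $\mathbb{E} X_i^2 = \sigma_i^2$,
\[
\IE \langle X, AX\rangle = \sum_{i=1}^n \sigma_i^2 a_{ii},
\]
and from $A = B^T B$ one has $a_{ii} = \sum_{j=1}^n b_{ji}^2$, so the centering term equals $\sum_{i=1}^n \sigma_i^2 \sum_{j=1}^n b_{ji}^2$, which is the deterministic quantity appearing on the right-hand side of the corollary.

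Finally, since $\norm{X_i}_{\Psi_1} = \norm{X_i}_{\Psi_{2/q}}$ with $q=2$, Proposition \ref{proposition:QuadraticFormWithDiagonal} applies. Using the ``consequently'' form stated there with $q = 2$, the exponents appearing in the maximum read $\sqrt{x}$, $x$, $x^{(q+1)/2} = x^{3/2}$ and $x^q = x^2$, paired respectively with $\norm{A}_{\mathrm{HS}}$, $\norm{A}_{\mathrm{op}}$, $\max_i \norm{(a_{ij})_j}_2$ and $\norm{A}_\infty$. Combining this with the identity for the mean gives the claim on the event of probability at least $1 - 2\exp(-x/C)$, with only the upper tail being used (which is why the absolute value is dropped).

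There is essentially no genuine obstacle here: the corollary is a straightforward reformulation of Proposition \ref{proposition:QuadraticFormWithDiagonal} in the special case $A = B^T B$, once one has recognized $\norm{BX}_2^2$ as a quadratic form in $X$ and identified its expectation. The only point to be careful about is matching the parameter $q = 2$ in the sub-exponential case $\alpha = 1$ so that the exponents $x^{3/2}$ and $x^2$ appear correctly.
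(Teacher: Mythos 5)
Your proposal is correct and follows exactly the paper's own argument: rewrite $\norm{BX}_2^2 = \skal{X,AX}$ with $A = B^TB$, identify $\IE\skal{X,AX} = \sum_i \sigma_i^2 a_{ii} = \sum_i \sigma_i^2 \sum_j b_{ji}^2$, and apply the ``consequently'' form of Proposition \ref{proposition:QuadraticFormWithDiagonal} with $q=2$, keeping only the upper tail. Nothing is missing.
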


Corollary \ref{corollary:ConcentrationNorm} can be compared to various bounds on the norms of $\norm{BX}_2$ in the case that $X$ is a sub-Gaussian vector (see for example \cite{HKZ12} or \cite{Ad15}). For sub-Gaussian vectors with sub-Gaussian constant $1$, we have with probability at least $1 - \exp(-x)$
\[
  \norm{BX}_2^2 \le \mathrm{trace}(B^T B) + 2 \norm{B^T B}_{\mathrm{HS}} \sqrt{x} + 2 \norm{B^T B}_{\mathrm{op}} x,
\]
so that we have similar terms corresponding to $\sqrt{x}$ and $x$, whereas in the sub-exponential case we need two additional terms to account for the heavier tails of its components.

\begin{proof}
Define the quadratic form
\[
Z \coloneqq \norm{BX}_2^2 = \skal{BX,BX} = \skal{X,B^TB X} = \skal{X,AX}.
\]
Using Proposition \ref{proposition:QuadraticFormWithDiagonal} with the matrix $A$ gives with probability $1 - 2 \exp(- x/C)$ 
\begin{align*}
\abs{Z - \IE Z} \le \max \Big( \sqrt{x} \norm{A}_{\mathrm{HS}}, x \norm{A}_{\mathrm{op}}, x^{3/2} \max_{i = 1,\ldots,n} \norm{A_{i\cdot}}_2, x^2 \norm{A}_\infty \Big).
\end{align*}
From these inequalities and $\abs{x} \ge x$ the claim easily follows by taking the square root. Note that $\IE Z = \IE \skal{X,AX} = \sum_{i = 1}^n \sigma_i^2 \sum_{j = 1}^n b_{ji}^2$.
\end{proof}

\subsection{Projections of a random vector and distance to a fixed subspace}
It is possible to apply Proposition \ref{proposition:QuadraticFormWithDiagonal} to any matrix $A$  associated to an orthogonal projection. In these cases, the norms can be explicitly calculated. Moreover, these norms do not depend on the structure of the subspace onto which one projects, but merely on its dimension. This leads to the following application, where we replace a fixed projection by a random one.

\begin{korollar}\label{corollary:randomProjection}
	Let $X_1, \ldots, X_n$ be independent random variables satisfying $\IE X_i = 0, \IE X_i^2 = \sigma_i^2$ and $\norm{X_i}_{\Psi_1} \le M$. Furthermore, let $m < n$ and $P$ be the (random) orthogonal projection onto an $m$-dimensional subspace of $\mathbb{R}^n$, distributed according to the Haar measure on the Grassmanian manifold $G_{m,n}$. For any $x > 0$, with probability at least $1 - 2 \exp (- x/C)$, we have
	\begin{equation}
		\Big\lvert \norm{PX}_2^2 - \frac{m}{n} \sum_{i = 1}^n \sigma_i^2 \Big\rvert \le M^2 \max \Big( \sqrt{xm}, x^2 \Big).
	\end{equation}
\end{korollar}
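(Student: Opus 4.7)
The plan is to condition on the random projection $P$ and apply Proposition \ref{proposition:QuadraticFormWithDiagonal} to the quadratic form $\norm{PX}_2^2 = \skal{X, PX}$ treating $A = P$ as a fixed matrix, then separately handle the deviation of the conditional mean $\sum_i \sigma_i^2 P_{ii}$ from the target value $(m/n)\sum_i \sigma_i^2$.

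For the first step, observe that since $P$ is an orthogonal projection of rank $m$, its spectrum is $\{1^{(m)}, 0^{(n-m)}\}$, so deterministically $\norm{P}_{\mathrm{op}} = 1$ and $\norm{P}_{\mathrm{HS}}^2 = \mathrm{trace}(P) = m$. The idempotency $P^2 = P$ yields $\sum_j P_{ij}^2 = P_{ii} \le 1$, so $\max_i \norm{(P_{ij})_j}_2 \le 1$; positive semidefiniteness gives $|P_{ij}|^2 \le P_{ii}P_{jj} \le 1$, hence $\norm{P}_\infty \le 1$. These bounds are independent of the realization of $P$. Applying Proposition \ref{proposition:QuadraticFormWithDiagonal} with $q = 2$ (so that $\psi_{2/q} = \psi_1$) conditionally on $P$ gives, with conditional probability at least $1 - 2\exp(-x/C)$,
\begin{equation*}
\Big| \skal{X, PX} - \sum_{i=1}^n \sigma_i^2 P_{ii} \Big| \le M^2 \max\big(\sqrt{xm},\, x,\, x^{3/2},\, x^2\big) \le M^2 \max\big(\sqrt{xm},\, x^2\big),
\end{equation*}
where $x$ and $x^{3/2}$ are absorbed into $\max(\sqrt{xm}, x^2)$ by a case split on $x \le 1$ versus $x \ge 1$. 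By Fubini this bound holds unconditionally as well.

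It remains to bound $|\sum_i \sigma_i^2 P_{ii} - (m/n)\sum_i \sigma_i^2|$. Representing $P = U P_0 U^T$ with $U$ Haar-distributed on $O(n)$ and $P_0 = \mathrm{diag}(I_m, 0)$, the map $f(U) := \mathrm{trace}(\Sigma U P_0 U^T)$ with $\Sigma = \mathrm{diag}(\sigma_i^2)$ has Frobenius gradient $\norm{\nabla f}_F \le C M^2 \sqrt{m}$, where one uses $\sigma_i^2 \le C \norm{X_i}_{\psi_1}^2 \le C M^2$ (cf.\ Appendix \ref{section:OrliczNorms}) together with $\sum_i \sigma_i^2 U_{ij}^2 = (U^T \Sigma U)_{jj} \le \norm{\Sigma}_{\mathrm{op}}$. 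The classical concentration inequality for Lipschitz functions under Haar measure on $O(n)$ (log-Sobolev constant of order $1/n$) yields
\begin{equation*}
\Big| \sum_i \sigma_i^2 P_{ii} - \frac{m}{n}\sum_i \sigma_i^2 \Big| \le C M^2 \sqrt{xm/n} \le C M^2 \sqrt{xm}
\end{equation*}
with probability at least $1 - 2 \exp(-x/C)$. A union bound of both estimates, followed by absorbing constants into $C$, completes the proof.

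The main obstacle is the second step: it invokes Haar concentration on the orthogonal group, a tool outside the polynomial-concentration machinery of the paper. An alternative that avoids this uses the rotational-invariance identity $\norm{PX}_2^2 \stackrel{d}{=} B \norm{X}_2^2$ with $B \sim \mathrm{Beta}(m/2, (n-m)/2)$ independent of $X$, writes
\begin{equation*}
\norm{PX}_2^2 - \frac{m}{n}\sum_i \sigma_i^2 = (B - m/n)\norm{X}_2^2 + \frac{m}{n}\Big(\norm{X}_2^2 - \sum_i \sigma_i^2\Big),
\end{equation*}
and controls the two summands by Beta/chi-squared concentration (combined with a tail bound on $\norm{X}_2^2$) respectively by Proposition \ref{proposition:QuadraticFormWithDiagonal} applied to $A = (m/n)I$.
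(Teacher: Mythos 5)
Your proposal is correct, and its first half coincides exactly with the paper's proof: condition on $P$, note that $\norm{P}_{\mathrm{HS}}^2=m$ and $\norm{P}_{\mathrm{op}}=\max_i\norm{(P_{ij})_j}_2=\norm{P}_\infty=1$ deterministically, apply Proposition \ref{proposition:QuadraticFormWithDiagonal} with $q=2$, and absorb the terms $x$ and $x^{3/2}$ into $\max(\sqrt{xm},x^2)$. Where you genuinely depart from the paper is your second step. The paper's proof stops after verifying $\IE\norm{PX}_2^2=\frac{m}{n}\sum_i\sigma_i^2$ over the \emph{joint} randomness of $(P,X)$, silently identifying this with the conditional centering $\sum_i\sigma_i^2 P_{ii}$ that Proposition \ref{proposition:QuadraticFormWithDiagonal} actually produces; these agree only when all $\sigma_i^2$ are equal, and a crude deterministic bound on $\abs{\sum_i\sigma_i^2(P_{ii}-m/n)}$ is of order $M^2 m$, which does not suffice for small $x$. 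So your extra step is not redundant pedantry but fills a real gap in the heterogeneous-variance case. Your Haar-concentration argument for it is sound (the gradient computation and $\sigma_i^2\le CM^2$ via Lemma \ref{lemma:PsiAlphaAndGrowth} check out), at the price of importing Lipschitz concentration on $O(n)$, a tool entirely outside the paper's polynomial machinery; your Beta-representation alternative stays closer to elementary facts and to Proposition \ref{proposition:QuadraticFormWithDiagonal} itself, but you leave it as a sketch. Either route yields the claim after a union bound and rescaling of $x$ to absorb constants, consistent with the paper's conventions.
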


\begin{proof}[Proof of Corollary \ref{corollary:randomProjection}]
	This is an application of Proposition \ref{proposition:QuadraticFormWithDiagonal}. To see
	\[
		\IE \norm{PX}^2_2 = \frac{m}{n} \sum_{i = 1}^n \sigma_i^2,
	\] 
	we use \cite[Lemma 5.3.2]{Ver18} conditionally on $X$, i.\,e. we have
	\[
	\IE \norm{PX}_2^2 = \IE \IE \left( \norm{PX}_2^2 \mid X \right) = \frac{m}{n} \IE \norm{X}_2^2 = \frac{m}{n} \sum_{i = 1}^n \sigma_i^2.
	\]
	Moreover, for any projection $P$ onto an $m$-dimensional subspace, one can see that $\norm{P}_{\mathrm{HS}}^2 = \sum_{i = 1}^n \lambda_i(P)^2 = m$. Moreover, it is clear that $\norm{P}_\infty \le \max_{i = 1,\ldots,n} \norm{(p_{ij})_j}_2 \le \norm{P}_{2 \to 2} = 1$.
\end{proof}

A very similar result which follows from Proposition \ref{proposition:EuclideanNormVector} is the following variant of \cite[Corollary 3.1]{RV13}. We use the notation $d(X,E) = \inf_{e \in E} d(X,e)$ for the distance between an element $X$ and a subset $E$ of a metric space $(M,d)$.

\begin{korollar}\label{corollary:distanceToSubspace}
Let $X_1, \ldots, X_n$ be independent random variables satisfying $\IE X_i = 0, \IE X_i^2 = 1$ and $\norm{X_i}_{\Psi_\alpha} \le M$ for some $\alpha \in (0,1] \cup \{2 \}$, and let $E$ be a subspace of $\mathbb{R}^n$ of dimension $d$. For any $t \ge \sqrt{n-d}$ we have
\[
\mathbb{P} \Big( \abs{d(X,E) - \sqrt{n-d}} \ge t \Big) \le 2 \exp \Big( - \frac{t^\alpha}{ CM^4} \Big).
\]
\end{korollar}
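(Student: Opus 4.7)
The plan is to realize $d(X,E)$ as $\norm{PX}_2$ where $P$ is the orthogonal projection onto the orthogonal complement $E^\perp$, and then apply Proposition \ref{proposition:EuclideanNormVector} to the matrix $B = P$. Since $E^\perp$ has dimension $n-d$, the key identities for $P$ are $\norm{P}_{\mathrm{op}} = 1$ (orthogonal projections are contractions with operator norm one on the nonzero subspace) and $\norm{P}_{\mathrm{HS}}^2 = \mathrm{trace}(P^T P) = \mathrm{trace}(P) = n-d$, using $P^T P = P^2 = P$.

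With these in hand, Proposition \ref{proposition:EuclideanNormVector} applied to $B = P$ yields, for any $c > 0$ and any $t \ge c\,\norm{P}_{\mathrm{HS}} = c\sqrt{n-d}$,
\[
\mathbb{P}\bigl(\abs{\norm{PX}_2 - \sqrt{n-d}} \ge t\bigr) \le 2\exp\Big( - \frac{\min(c^{2-\alpha},1)}{CM^4 \norm{P}_{\mathrm{op}}^\alpha}\, t^\alpha \Big).
\]
Choosing $c = 1$ makes the prefactor $\min(c^{2-\alpha},1) = 1$, and the hypothesis $\norm{P}_{\mathrm{op}} = 1$ removes the operator-norm dependence, giving the claimed inequality for $t \ge \sqrt{n-d}$.

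There is no real obstacle in this argument; essentially all the work has already been done in Proposition \ref{proposition:EuclideanNormVector}. The only thing to verify carefully is the identification $d(X,E) = \norm{PX}_2$, which is a standard consequence of the orthogonal decomposition $X = P_E X + P X$ with $P_E X \in E$ being the closest point in $E$ to $X$, together with the Hilbert--Schmidt and operator norm computations for $P$. The restriction $t \ge \sqrt{n-d}$ in the statement matches precisely the range $t \ge c\norm{P}_{\mathrm{HS}}$ required by Proposition \ref{proposition:EuclideanNormVector} with $c = 1$.
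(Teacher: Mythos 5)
Your proposal is correct and is exactly the argument the paper intends: the paper's proof simply defers to \cite[Corollary 3.1]{RV13}, whose proof is precisely the identification $d(X,E)=\norm{PX}_2$ for the orthogonal projection $P$ onto $E^\perp$ followed by an application of Proposition \ref{proposition:EuclideanNormVector} with $\norm{P}_{\mathrm{HS}}=\sqrt{n-d}$, $\norm{P}_{\mathrm{op}}=1$ and $c=1$.
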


\begin{proof}
This follows exactly as in \cite[Corollary 3.1]{RV13} by using Proposition \ref{proposition:EuclideanNormVector}.
\end{proof}

\subsection{Spectral bound for a product of a fixed and a random matrix}
We can also extend the second application in \cite{RV13} to any $\alpha$-sub-exponential random vector as follows.

\begin{proposition}\label{proposition:randomMatricesRV13}
Let $B$ be a fixed $m \times N$ matrix and let $G$ be a $N \times n$ random matrix with independent entries satisfying $\IE g_{ij} = 0, \IE g_{ij}^2 = 1$ and $\norm{g_{ij}}_{\Psi_\alpha} \le M$ for some $\alpha \in (0,1]$. For any $u, v \ge 1$ with probability at least $1 - 2 \exp(- u^\alpha r(B)^\alpha - v^\alpha n)$ we have
\[
\norm{BG}_{\mathrm{op}} \le 4 C_\alpha M^{4/\alpha} \Big( u \norm{B}_{\mathrm{HS}} + v n^{1/\alpha} \norm{B}_{\mathrm{op}} \Big).
\]
\end{proposition}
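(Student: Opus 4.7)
The strategy is a standard $\epsilon$-net argument on the unit sphere $S^{n-1}$, combined with a fixed-direction concentration estimate obtained from Proposition \ref{proposition:EuclideanNormVector}. First, I fix a $\tfrac{1}{2}$-net $\mathcal{N} \subseteq S^{n-1}$ with $|\mathcal{N}| \le 5^n$, so that $\norm{BG}_{\mathrm{op}} \le 2 \max_{x \in \mathcal{N}} \norm{BGx}_2$.

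Next I analyze $\norm{BGx}_2$ for a single $x \in S^{n-1}$. Writing $y := Gx \in \mathbb{R}^N$, the components $y_i = \sum_{j=1}^{n} g_{ij} x_j$ are independent across rows, centered, and satisfy $\mathbb{E} y_i^2 = \norm{x}_2^2 = 1$. Crucially, applying Corollary \ref{wHSnc} in the case $d=1$ (i.e.\ to the linear form $\sum_j g_{ij} x_j$ with $A = (x_j)$, $\norm{A}_{\mathrm{HS}} = 1$) yields $\norm{y_i}_{\Psi_\alpha} \le C_\alpha M$. I can then invoke Proposition \ref{proposition:EuclideanNormVector} (applied to the matrix $B$ and the vector $y$, with constant $M$ replaced by $C_\alpha M$ and e.g.\ $c=1$): for any $s \ge \norm{B}_{\mathrm{HS}}$,
\[
\mathbb{P}\bigl(\norm{BGx}_2 \ge \norm{B}_{\mathrm{HS}} + s\bigr) \;\le\; 2\exp\Bigl(-\tfrac{s^\alpha}{C_\alpha M^4 \norm{B}_{\mathrm{op}}^\alpha}\Bigr).
\]

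A union bound over $\mathcal{N}$ then gives
\[
\mathbb{P}\bigl(\norm{BG}_{\mathrm{op}} \ge 2(\norm{B}_{\mathrm{HS}} + s)\bigr) \;\le\; 2\exp\Bigl(n\log 5 - \tfrac{s^\alpha}{C_\alpha M^4 \norm{B}_{\mathrm{op}}^\alpha}\Bigr).
\]
To match the claimed form, I will choose $s := C_\alpha' M^{4/\alpha}\bigl(u \norm{B}_{\mathrm{HS}} + v n^{1/\alpha} \norm{B}_{\mathrm{op}}\bigr)$. For $\alpha \le 1$ one has $(a+b)^\alpha \le a^\alpha + b^\alpha$, but since I only need a lower bound on $s^\alpha$ it suffices to use $s^\alpha \ge \max\bigl((C_\alpha')^\alpha M^4 u^\alpha \norm{B}_{\mathrm{HS}}^\alpha,\ (C_\alpha')^\alpha M^4 v^\alpha n \norm{B}_{\mathrm{op}}^\alpha\bigr) \ge \tfrac{1}{2}(C_\alpha')^\alpha M^4 \bigl(u^\alpha \norm{B}_{\mathrm{HS}}^\alpha + v^\alpha n \norm{B}_{\mathrm{op}}^\alpha\bigr)$. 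Choosing $(C_\alpha')^\alpha$ large enough (depending on $C_\alpha$ and $\log 5$) and using $v^\alpha \ge 1$ to absorb the $n\log 5$ term into $v^\alpha n$, the exponent becomes at most $-u^\alpha r(B)^\alpha - v^\alpha n$, with $r(B) = \norm{B}_{\mathrm{HS}}/\norm{B}_{\mathrm{op}}$. Since $u \ge 1$ forces $s \ge \norm{B}_{\mathrm{HS}}$ (validating the application of Proposition \ref{proposition:EuclideanNormVector}), I get $\norm{BG}_{\mathrm{op}} \le 2(\norm{B}_{\mathrm{HS}} + s) \le 4s$, which is exactly the stated bound.

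The only genuinely delicate step is obtaining the $\Psi_\alpha$ bound for the linear combination $y_i = \sum_j g_{ij} x_j$ when $\alpha \in (0,1]$, where $\psi_\alpha$ fails to be a norm; this is resolved cleanly by Corollary \ref{wHSnc}. The remaining work is bookkeeping: tracking how constants, the $\alpha$-power, and the factor $5^n$ from the net combine, and verifying that $u, v \ge 1$ make all the steps (the hypothesis $s \ge \norm{B}_{\mathrm{HS}}$, the absorption of $n \log 5$, and the $\norm{B}_{\mathrm{HS}} \le s$ used at the very end) go through with an absolute factor of $4$ in front.
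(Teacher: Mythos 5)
Your proof is correct and follows essentially the same route as the paper: a fixed-direction tail bound obtained from Proposition \ref{proposition:EuclideanNormVector}, followed by a $1/2$-net union bound with $\abs{\mathcal{N}} \le 5^n$ and a choice of $s$ that absorbs the $n\log 5$ term using $v \ge 1$. The only (harmless) difference is that you first form $y = Gx$ and bound $\norm{y_i}_{\Psi_\alpha} \le C_\alpha M$ via Corollary \ref{wHSnc}, whereas the paper applies Proposition \ref{proposition:EuclideanNormVector} directly to the $m \times Nn$ matrix of the map $G \mapsto BGx$ acting on the independent entries of $G$ (using $\norm{T}_{\mathrm{HS}} = \norm{B}_{\mathrm{HS}}$ and $\norm{T}_{\mathrm{op}} \le \norm{B}_{\mathrm{op}}$), thereby avoiding the intermediate Orlicz-norm estimate for the linear combinations.
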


\begin{proof}
We mimic the proof of \cite[Theorem 3.2]{RV13}. For any fixed $x \in S^{n-1}$ consider the linear operator $T: \mathbb{R}^{Nn} \to \mathbb{R}^m$ given by $T(G) = BGx$, and (by abuse of notation) write $T$ for the matrix corresponding to this linear map in the standard basis. Using Proposition \ref{proposition:EuclideanNormVector} applied to the matrix $T$ we have
\[
\IP\Big( \abs{\norm{BGx}_2 - \norm{T}_{\mathrm{HS}}} \ge t \Big) \le 2 \exp \Big( - \frac{t^\alpha}{CM^4 \norm{T}_{\mathrm{op}}^\alpha} \Big).
\]
Now, since $\norm{T}_{\mathrm{HS}} = \norm{B}_{\mathrm{HS}}$ and $\norm{T}_{\mathrm{op}} \le \norm{B}_{\mathrm{op}}$, this yields for any $t \ge \norm{B}_{\mathrm{HS}}$
\[
\IP \Big( \norm{BGx}_2 > \norm{B}_{\mathrm{HS}} + t \Big) \le 2 \exp \Big(- \frac{t^\alpha}{CM^4 \norm{B}_{\mathrm{op}}^\alpha} \Big).
\]
If we define $t = (2 CM^4)^{1/\alpha}\Big( u \norm{B}_{\mathrm{HS}} + (\log(5)+1)^{1/\alpha} v n^{1/\alpha} \norm{B}_{\mathrm{op}} \Big)$ for arbitrary $u,v \ge 1$ and use the inequality $2(r+s)^\alpha \ge (r^\alpha + s^\alpha)$ valid for all $r, s \ge 0$, we obtain
\begin{align*}
\IP \Big( \norm{BGx}_2 > \norm{B}_{\mathrm{HS}} + t \Big) &\le 2 \exp \Big( - \frac{u^\alpha \norm{B}_{\mathrm{HS}}^\alpha + v^\alpha n(\log(5)+1) \norm{B}_{\mathrm{op}}^\alpha}{\norm{B}_{\mathrm{op}}^\alpha} \Big) \\
&\le 2 \exp \Big( - u^\alpha r(B)^\alpha - v^\alpha n - v^\alpha n \log(5) \Big) \\
&\le 5^{-n} 2 \exp\Big( - u^\alpha r(B)^\alpha - v^\alpha n \Big).
\end{align*}
The last step is again a covering argument as in \cite{RV13}. Choose a $1/2$-covering $\mathcal{N}$ (satisfying $\abs{\mathcal{N}} \le 5^n$, see \cite[Lemma 5.2]{Ver12}) of the unit sphere in $\mathbb{R}^n$, and note that a union bound gives
\begin{align*}
\mathbb{P}\Big( \bigcap_{x \in \mathcal{N}} \norm{BGx}_2 \le \norm{B}_{\mathrm{HS}} + t \Big) &\ge 1 - \sum_{x \in \mathcal{N}} \IP\Big( \norm{BGx}_2 > \norm{B}_{\mathrm{HS}} + t \Big) \\
&\ge 1 - 2 \exp \Big( -u^\alpha r(B)^\alpha - v^\alpha n \Big).
\end{align*}
Lemma 5.3 in \cite[Lemma 5.3]{Ver12} yields
\[
	\norm{BG}_{\mathrm{op}} \le 2 \max_{x \in \mathcal{N}} \norm{BGx}_2 \le 2(\norm{B}_{\mathrm{HS}} + t),
\] 
from which the assertion easily follows by upper bounding and simplifying the expression $2 \norm{B}_{\mathrm{{HS}}} + 2t$.
\end{proof}

\subsection{Special cases} 
It is possible to apply all results to random variables having a Poisson distribution, i.\,e. $X_i \sim \mathrm{Poi}(\lambda_i)$ for some $\lambda_i \in (0,\infty)$. By using the moment generating function of the Poisson distribution, it is easily seen that
\[
\norm{X_i}_{\Psi_1} = \frac{1}{\log\Big( \log(2) \lambda_i^{-1} + 1 \Big)} \eqqcolon g(\lambda_i).
\]
The function $g$ is increasing and satisfies $g(x) \sim \log(1/x)$ (for $x \to 0$) and $g(x) \sim x/\log(2)$ (for $x \to \infty$).
More generally, if the random variable $\abs{X}$ has a moment generating function $\phi_{\abs{X}}$ in a neighborhood of $0$, it can be used to explicitly calculate the $\Psi_1$-norm. Indeed, we have $\IE \exp(\abs{X}/t) = \phi_{\abs{X}}(t^{-1})$, and so $\norm{X}_{\Psi_1} = 1/\phi^{-1}_{\abs{X}}(2)$.

Thus, as a special case of Proposition \ref{proposition:QuadraticFormWithDiagonal}, we obtain the following corollary.

\begin{korollar}
Let $X_i \sim \mathrm{Poi}(\lambda_i)$, $B \coloneqq g(\max_{i = 1,\ldots,n} \lambda_i)$ and $A = (a_{ij})$ be a symmetric $n \times n$ matrix. We have for any $t > 0$
\begin{align*}
&\IP\Big( \Big\lvert \sum_{i,j} a_{ij} X_i X_j - \sum_{i = 1}^n a_{ii} \lambda_i \Big\rvert \ge B^2 t \Big) \\
&\le 2 \exp \left( - \frac{1}{C} \min \left( \frac{t^2}{\norm{A}^2_{\mathrm{HS}}}, \frac{t}{\norm{A}_{\mathrm{op}}}, \Big( \frac{t}{\max_{i} \norm{(a_{ij})_j}_2} \Big)^{\frac{2}{3}}, \left( \frac{t}{\norm{A}_\infty} \right)^{\frac{1}{2}} \right) \right) \\
&\le 2 \exp \left(- \frac{1}{C} \min\left( \frac{t^2}{\norm{A}^2_{\mathrm{HS}}}, \left( \frac{t}{\norm{A}_{\mathrm{op}}} \right)^{\frac{1}{2}} \right) \right).
\end{align*}
\end{korollar}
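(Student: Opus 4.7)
The plan is to recognize this as a direct specialization of Proposition \ref{proposition:QuadraticFormWithDiagonal} with $q=2$ (so that $\Psi_{2/q} = \Psi_1$), combined with an elementary simplification of the four-term minimum in a second step.

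For the first displayed bound, I would use the computation immediately preceding the statement: the Poisson MGF yields $\norm{X_i}_{\Psi_1} = g(\lambda_i)$, and since $g$ is visibly monotone increasing on $(0,\infty)$, this gives $\norm{X_i}_{\Psi_1} \le B$ uniformly in $i$. Noting $\mathrm{Var}(X_i) = \lambda_i = \sigma_i^2$ for Poisson, I would then apply Proposition \ref{proposition:QuadraticFormWithDiagonal} to the centered variables $Y_i := X_i - \lambda_i$ (whose $\Psi_1$-norms remain bounded by a constant multiple of $B$ by the standard centering estimate in Appendix \ref{section:OrliczNorms}), with $M \asymp B$, and rescale $t \to B^2 t$ in the tail parameter $\eta(A,2,\cdot)$. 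After absorbing all multiplicative constants into the outer $C$, this yields the first inequality.

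For the second, cruder inequality, the task reduces to the algebraic comparison
\[
\min\!\Big(\tfrac{t^2}{\norm{A}_{\mathrm{HS}}^2},\, \tfrac{t}{\norm{A}_{\mathrm{op}}},\, \big(\tfrac{t}{\max_i \norm{(a_{ij})_j}_2}\big)^{2/3},\, \big(\tfrac{t}{\norm{A}_\infty}\big)^{1/2}\Big) \;\ge\; \min\!\Big(\tfrac{t^2}{\norm{A}_{\mathrm{HS}}^2},\, \big(\tfrac{t}{\norm{A}_{\mathrm{op}}}\big)^{1/2}\Big),
\]
which I would verify by splitting into the regimes $t \ge \norm{A}_{\mathrm{op}}$ and $t < \norm{A}_{\mathrm{op}}$, using the inclusions $\norm{A}_\infty \le \max_i \norm{(a_{ij})_j}_2 \le \norm{A}_{\mathrm{op}} \le \norm{A}_{\mathrm{HS}}$. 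In the first regime, $t/\norm{A}_{\mathrm{op}} \ge 1$ forces each of the three operator-type terms on the left to dominate $(t/\norm{A}_{\mathrm{op}})^{1/2}$, since raising a quantity $\ge 1$ to a larger exponent only enlarges it. In the second regime, $t^2/\norm{A}_{\mathrm{HS}}^2 \le (t/\norm{A}_{\mathrm{op}})^2 \le (t/\norm{A}_{\mathrm{op}})^{1/2}$ reduces the right-hand minimum to the Hilbert--Schmidt term, and a direct check shows each of the four left-hand terms exceeds this quantity.

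The main point requiring care is the passage from $X_i$ to $Y_i$: expanding $X_iX_j = (Y_i+\lambda_i)(Y_j+\lambda_j)$ produces, in addition to the pure chaos in $Y$, a linear term $2\skal{A\lambda,Y}$ and the deterministic shift $\lambda^{\top}\!A\lambda$, so the quadratic bound for $Y$ must be combined with first-order tail estimates (for instance Theorem \ref{wHSn} at $D=1$, i.e.\ a sub-exponential Bernstein inequality applied to $2\skal{A\lambda,Y}$) to re-center the quadratic chaos in $X$ at $\sum_i a_{ii}\lambda_i$. Because the linear contribution is governed by the same norms of $A$ already appearing in $\eta(A,2,\cdot)$, it fits cleanly into the same exponential regime and may be absorbed into the outer constant.
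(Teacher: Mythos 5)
Your core reduction is the right one and is exactly what the paper intends (the corollary is stated without proof as a specialization of Proposition \ref{proposition:QuadraticFormWithDiagonal} with $q=2$): the identity $\norm{X_i}_{\Psi_1}=g(\lambda_i)$ and the monotonicity of $g$ give $M\le B$ up to the centering constant, $\sigma_i^2=\lambda_i$, and replacing $t$ by $B^2t$ only changes the exponent by a constant since every term of $\eta(A,2,\cdot)$ is a fixed power of $t$. Your case analysis for the passage from the four-term to the two-term minimum is also correct in both regimes, using $\norm{A}_\infty\le\max_i\norm{(a_{ij})_j}_2\le\norm{A}_{\mathrm{op}}\le\norm{A}_{\mathrm{HS}}$ (in the regime $t<\norm{A}_{\mathrm{op}}$ one needs the extra observation that terms with base $\ge 1$ trivially dominate $t^2/\norm{A}_{\mathrm{HS}}^2<1$, which your ``direct check'' covers).

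The genuine problem is your final paragraph. The linear term $2\skal{A\lambda,Y}$ arising from de-centering is \emph{not} governed by the norms of $A$ appearing in $\eta(A,2,\cdot)$: its scale is determined by $\lvert A\lambda\rvert_2$ and $\norm{A\lambda}_\infty$, which grow with $\lambda$, and the deterministic shift $\skal{\lambda,A\lambda}$ does not vanish either. Neither can be ``absorbed into the outer constant''. Concretely, for $A=\mathrm{Id}$ and $\lambda_i=\lambda$ large, $\sum_i X_i^2-\sum_i a_{ii}\lambda_i$ has mean $n\lambda^2$ while $B^2\norm{A}_{\mathrm{HS}}\sim\lambda^2\sqrt{n}$, so the stated tail bound fails outright if $X_i$ denotes the raw Poisson variable; the literal uncentered reading of the corollary is false, and no patching of constants can save it. The correct resolution is that there is nothing to patch: Proposition \ref{proposition:QuadraticFormWithDiagonal} is formulated for \emph{centered} variables, and the corollary's centering term $\sum_i a_{ii}\lambda_i=\sum_i\sigma_i^2a_{ii}=\IE\skal{Y,AY}$ for $Y_i=X_i-\lambda_i$ shows it is to be read as the quadratic form in the centered Poisson variables. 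With that reading, $\norm{Y_i}_{\Psi_1}\le C\norm{X_i}_{\Psi_1}\le CB$ by Corollary \ref{corollary:CenteringAndPsiAlpha}, no cross terms appear, and your first two paragraphs already constitute the whole proof.
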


For Poisson chaos of arbitrary order $d \in \mathbb{N}$, one may derive similar results by evaluating Theorem \ref{thmch} or Corollary \ref{corollary:PsiAlphaKL15} (both for $\alpha=1$). Note though that already for $d=1$, we lose a logarithmic factor in the exponent. However, we are not aware of any more refined fluctuation estimates for $d \ge 2$.

Another interesting example of a sub-exponential random variable arises in stochastic geometry. If $K \subseteq \mathbb{R}^n$ is an isotropic, convex body and $X$ is distributed according to the cone measure on $K$, then $\norm{\skal{X,\theta}}_{\Psi_1} \le c$ for some constant $c$ and any $\theta \in S^{n-1}$. For the details and the proof we refer to \cite[Lemma 5.1]{PTT18}.

\subsection{Concentration properties for fixed design linear regression}
It is possible to extend the example of the fixed design linear regression in \cite{HKZ12} to the situation of a sub-exponential noise (instead of sub-Gaussian).

To this end, let $y_1, \ldots, y_n \in \mathbb{R}^d$ be fixed vectors (commonly called the \emph{design vectors}), $Y = (y_1, \ldots, y_n)$ (the $d \times n$ \emph{design matrix}) and assume that the $d \times d$ matrix $\Sigma = n^{-1} \sum_{i = 1}^n y_i y_i^T$ is invertible; in this case, define $B \coloneqq n^{-1} \Sigma^{-1/2} Y \in M(d \times n)$. Let $X_1, \ldots, X_n$ be independent random variables with $\norm{X_i}_{\Psi_1} \le M$ and define
\begin{align*}
\beta &\coloneqq n^{-1} \sum_{i = 1}^n \IE X_i \Sigma^{-1}y_i \\
\hat{\beta}(X) &\coloneqq n^{-1} \sum_{i = 1}^n X_i \Sigma^{-1}y_i.
\end{align*}
$\beta$ is the coefficient vector of the least expected squared error and $\hat{\beta}(X)$ is its ordinary least squares estimator (given the observation $X$). The quality of the estimator $\hat{\beta}$ can be judged by the excess loss
\begin{equation}
R(X) = \norm{\Sigma^{1/2}(\hat{\beta}(X) - \beta)}^2 = \sum_{i,j} a_{ij} (X_i - \IE X_i)(X_j - \IE X_j), 
\end{equation}
where $A = (a_{ij}) = B^T B = n^{-2} Y^T \Sigma^{-1} Y$, as can be shown by elementary calculations. Observe that this is a quadratic form in $X_i$ with coefficients depending on the vectors $y_i$. Thus, Proposition \ref{proposition:QuadraticFormWithDiagonal} yields the following corollary.

\begin{korollar}
In the above setting, for any $x > 0$ the inequality
\[
  \abs{R(X) - \IE R(X)} \le 4M^2 \max \left( \sqrt{x} \norm{A}_{\mathrm{HS}}, x \norm{A}_{\mathrm{op}} , x^{3/2} \max_{i=1,\ldots,n} \norm{(a_{ij})_j}_2, x^2 \norm{A}_\infty \right)
\]
holds with probability at least $1 - 2\exp(- x/C)$.
\end{korollar}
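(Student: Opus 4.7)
The proof will be essentially a direct application of Proposition \ref{proposition:QuadraticFormWithDiagonal}, so the plan is to set everything up so that the proposition can be invoked cleanly. First I would introduce the centered variables $Z_i \coloneqq X_i - \IE X_i$ and observe that, by construction,
\[
R(X) = \sum_{i,j=1}^n a_{ij} Z_i Z_j, \qquad \IE R(X) = \sum_{i=1}^n a_{ii}\,\IE Z_i^2,
\]
so that $R(X) - \IE R(X)$ is precisely the kind of centered quadratic form to which Proposition \ref{proposition:QuadraticFormWithDiagonal} applies, provided the $Z_i$ are independent, centered, and have a uniformly bounded $\Psi_1$-norm.

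Next I would verify the Orlicz-norm condition for the $Z_i$. Since $\norm{\,\cdot\,}_{\Psi_1}$ is a norm and a constant $c$ satisfies $\norm{c}_{\Psi_1} \le |c|/\log 2 \le |c|$ (up to an elementary adjustment, with the full justification deferred to the appendix on Orlicz norms), one obtains the standard centering estimate $\norm{Z_i}_{\Psi_1} \le 2\norm{X_i}_{\Psi_1} \le 2M$. Thus the hypotheses of Proposition \ref{proposition:QuadraticFormWithDiagonal} hold for $(Z_i)_{i=1}^n$ with the matrix $A = B^T B = n^{-2} Y^T \Sigma^{-1} Y$, in the regime $q = 2$ (so that $\Psi_{2/q} = \Psi_1$), and with $M$ replaced by $2M$.

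With $q = 2$ the exponents appearing in $\eta(A,q,\,\cdot\,)$ specialise to $2,\,1,\,2/3,\,1/2$, corresponding to the four norms $\norm{A}_{\mathrm{HS}}$, $\norm{A}_{\mathrm{op}}$, $\max_i \norm{(a_{ij})_j}_2$ and $\norm{A}_\infty$. Invoking the ``consequently'' part of Proposition \ref{proposition:QuadraticFormWithDiagonal} directly yields, with probability at least $1 - 2\exp(-x/C)$,
\[
\abs{R(X) - \IE R(X)} \le (2M)^2 \max\bigl(\sqrt{x}\,\norm{A}_{\mathrm{HS}},\; x\norm{A}_{\mathrm{op}},\; x^{3/2}\!\!\max_{i=1,\ldots,n}\norm{(a_{ij})_j}_2,\; x^2\norm{A}_\infty\bigr),
\]
which is exactly the claimed inequality since $(2M)^2 = 4M^2$.

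There is no real obstacle here beyond bookkeeping: the only non-cosmetic step is the centering bound $\norm{X_i - \IE X_i}_{\Psi_1} \le 2\norm{X_i}_{\Psi_1}$, which is a standard consequence of the properties of exponential Orlicz norms collected in Appendix \ref{section:OrliczNorms}; the rest is simply identifying $R(X)$ as a centered quadratic form in the $Z_i$ with matrix $A$ and reading off the inequality from Proposition \ref{proposition:QuadraticFormWithDiagonal} with $q=2$.
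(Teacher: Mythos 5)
Your proposal is correct and is exactly the paper's (essentially one-line) argument: identify $R(X)$ as a centered quadratic form in $Z_i = X_i - \IE X_i$, note $\norm{Z_i}_{\Psi_1} \le 2M$, and apply Proposition \ref{proposition:QuadraticFormWithDiagonal} with $q=2$, the factor $4 = (2M/M)^2$ coming from the centering. One small slip in your aside: $|c|/\log 2 \ge |c|$, not $\le$; the correct route to $\norm{Z_i}_{\Psi_1}\le 2\norm{X_i}_{\Psi_1}$ is $\norm{\IE X_i}_{\Psi_1}=|\IE X_i|/\log 2$ together with Jensen's inequality, which gives $|\IE X_i|\le \log(2)\,\norm{X_i}_{\Psi_1}$, and then the triangle inequality.
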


Thus the concentration properties of $R(X)$ around its mean depends on the four different norms of the matrix $A$. The factor $4$ appears due to the necessary centering of the $X_i$.

\subsection{Central limit theorems for quadratic forms and random edge weights}
In this section, our aim is to quantify central limit theorems for quadratic forms $Q(X) = Q_{A}(X) = \sum_{i,j} a_{ij} X_i X_j$ in sub-exponential random variables $X_1, \ldots, X_n$ using concentration of measure results. Typically, the first step is finding conditions such that $Q(X)$ can be approximated by a linear form $L(X)$. This reduces the problem to finding conditions such that $L(X)$ is asymptotically normal (e.\,g. using the Lyapunov central limit theorem).

The weak convergence of quadratic forms to a normal distribution is classical, and we refer to \cite{dJ87} and \cite{GT99}, \cite{Cha08} for general statements (and rates of convergence), as well as \cite{PG99} for general statements on central limit theorems for $U$-statistics.

Let us first consider the task of approximating $Q(X)$ by a linear form $L(X)$. To this end, assume that $A$ is symmetric with vanishing diagonal and $\mathbb{E}X_i \ne 0$ for some $i$. Then, we may decompose
\begin{equation*}
Q(X) = \sum_{i,j} a_{ij} (X_i-\mathbb{E}X_i)(X_j-\mathbb{E}X_j) + 2\sum_{i=1}^{n} \big(\sum_{j=1}^{n}a_{ij}\mathbb{E}X_j\big) (X_i-\mathbb{E}X_i) + \mathbb{E}Q(X)
\end{equation*}
(this is in fact the Hoeffding decomposition of $Q(X)$), and we therefore define 
\[
L(X) \coloneqq \sum_{i=1}^{n} \big(\sum_{j=1}^{n}a_{ij}\mathbb{E}X_j\big) (X_i-\mathbb{E}X_i) \eqqcolon \sum_{i=1}^{n} c_{A,i} (X_i-\mathbb{E}X_i). \] 
Obviously, $\mathrm{Var}(L(X)) = \sum_{i = 1}^n c_{A,i}^2\mathrm{Var}(X_i)$. Thus, under the condition
\begin{equation}\label{eqn:MeanFieldApproxCondition}
\lim_{n \to \infty} \frac{\sum_{i = 1}^n c_{A,i}^2 \mathrm{Var}(X_i)}{\norm{A}_{\mathrm{HS}}^2} = \infty,
\end{equation}
the asymptotic behavior of the properly normalized quadratic form is dominated by the linear term. Under additional assumptions of the tail behavior of the $X_i$, the approximation can also be quantified.

\begin{lemma}\label{proposition:CLTsubexp}
Let $X = (X_n)_{n \in \IN}$ be a sequence of independent random variables with $\norm{X_i}_{\Psi_1} \le M$ for a constant $M > 0$ and assume $\IE X = (\IE X_n)_{n \in \IN} \neq 0$, $\mathrm{Var}(X) \coloneqq (\mathrm{Var}(X_n))_{n \in \IN} \neq 0$. Furthermore, let $A = A^{(n)}$ be a sequence of symmetric matrices satisfying \eqref{eqn:MeanFieldApproxCondition}. Then for any $t > 0$
{\footnotesize
\[
\IP \left( \abs{Q(X) - \IE Q(X) - 2L(X)} \ge t \right) \le 2 \exp \left( -\min \left( \frac{t^2 \mathrm{Var}(L(X))^2}{\norm{A}_{\mathrm{HS}}^2}, \frac{t^{1/2} \mathrm{Var}(L(X))^{1/2}}{\norm{A}_{\mathrm{HS}}^{1/2}} \right) \right).
\]
}
\end{lemma}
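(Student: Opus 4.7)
The strategy is to recognize the random variable on the left hand side as a centered, degenerate quadratic chaos in the recentered variables, so that the bound follows from a direct application of Proposition \ref{proposition:weakFormHansonWright} with $\alpha = 1$.

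Set $\tilde X_i \coloneqq X_i - \IE X_i$. The Hoeffding decomposition recalled just above the lemma, combined with the hypothesis that $A$ has vanishing diagonal, yields
\[
Q(X) - \IE Q(X) - 2 L(X) = \sum_{i,j} a_{ij} \tilde X_i \tilde X_j = f_{2,A}(\tilde X),
\]
which is exactly the chaos of \eqref{eqn:chaos} in the centered variables. Since $\norm{X_i}_{\Psi_1} \le M$, the standard centering estimate for Orlicz norms (Appendix \ref{section:OrliczNorms}) gives $\norm{\tilde X_i}_{\Psi_1} \le C M$, so the $\tilde X_i$ are independent and centered with a uniform $\Psi_1$ bound, and $\IE \tilde X_i^2 = \mathrm{Var}(X_i)$.

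Applying Proposition \ref{proposition:weakFormHansonWright} with $\alpha = 1$ to the symmetric matrix $A$ (whose vanishing diagonal annihilates the correction $\sum_i \sigma_i^2 a_{ii}$) yields, for every $t > 0$,
\[
\IP\bigl( |Q(X) - \IE Q(X) - 2 L(X)| \ge t \bigr) \le 2 \exp\Bigl( - \frac{1}{C} \min\Bigl( \frac{t^2}{\norm{A}_{\mathrm{HS}}^2},\; \Bigl(\frac{t}{\norm{A}_{\mathrm{op}}}\Bigr)^{1/2} \Bigr) \Bigr),
\]
with $M$ absorbed into $C$. Using the elementary inequality $\norm{A}_{\mathrm{op}} \le \norm{A}_{\mathrm{HS}}$, one may replace the operator norm by the Hilbert--Schmidt norm throughout. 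The form of the bound claimed in the lemma is then obtained by rescaling $t$ by the appropriate power of $\mathrm{Var}(L(X))^{1/2}$; this is precisely the standardization suggested by the Lyapunov CLT for the linear principal part $L(X)$. Combined with hypothesis \eqref{eqn:MeanFieldApproxCondition}, which asserts $\norm{A}_{\mathrm{HS}}^2 / \mathrm{Var}(L(X)) \to 0$, the rescaled bound forces the exponent to diverge at the CLT scale, confirming that the quadratic remainder is negligible relative to $L(X)$.

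The main obstacle is bookkeeping rather than probabilistic: all the heavy lifting is done by Proposition \ref{proposition:weakFormHansonWright}, and what remains is simply to track the powers of $\mathrm{Var}(L(X))$ and $\norm{A}_{\mathrm{HS}}$ produced by the rescaling in order to match the exact form of the minimum in the statement. Up to this bookkeeping, there is nothing further to do: the key probabilistic input is the two-regime Hanson--Wright estimate for sub-exponential variables, and the role of hypothesis \eqref{eqn:MeanFieldApproxCondition} is only to calibrate the quadratic remainder against the CLT scale of $L(X)$.
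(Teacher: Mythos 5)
Your proposal is correct and follows essentially the same route as the paper: identify $Q(X)-\IE Q(X)-2L(X)$ via the Hoeffding decomposition as the degenerate chaos $\langle A(X-\IE X),X-\IE X\rangle$, apply a sub-exponential Hanson--Wright bound (the paper invokes Theorem \ref{thmch} where you invoke Proposition \ref{proposition:weakFormHansonWright} with $\alpha=1$, which is immaterial), and weaken all norms to $\norm{A}_{\mathrm{HS}}$. Your final ``rescaling'' step is exactly what the paper does as well, namely substituting $t\,\mathrm{Var}(L(X))$ for the deviation threshold in the chaos bound.
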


\begin{proof}
Rewrite the Hoeffding decomposition of $Q$ as
{\footnotesize
  \begin{equation}
  \skal{A^{(n)}(X-\IE X), X - \IE X} = \skal{A^{(n)}X,X} - \IE \skal{A^{(n)}X,X} - 2 \skal{X, A \IE X} + 2 \IE \skal{X, A \IE X},
  \end{equation}}
and recall $c_n = \mathrm{Var}(L(X))$. An application of Theorem \ref{thmch} yields
  {\footnotesize
  \begin{equation}
    \IP\Big( \abs{\skal{A^{(n)}(X - \IE X), (X - \IE X)}} \ge c_n t \Big) \le 2 \exp \Big( -\frac{1}{C_d} \min \Big( \Big(\frac{c_n t}{\norm{A}_{\mathrm{HS}}}\Big)^2, \Big( \frac{c_n t}{\norm{A}_{\mathrm{HS}}} \Big)^{1/2} \Big) \Big).
  \end{equation}}
\end{proof}

In the case that the $X_i$ are also identically distributed, \eqref{eqn:MeanFieldApproxCondition} is equivalent to $\norm{A^{(n)}}_{\mathrm{HS}}^2 = o(\sum_{i = 1}^n ( \sum_{j = 1}^n A^{(n)}_{ij})^2)$. For example it is satisfied for $A^{(n)} = ee^T - \mathrm{Id}$, where $e = (1,\ldots,1)^T \in \mathbb{R}^n$.

We may apply these results to sequences of graphs. Here we always assume that the $X_i$ are identically distributed. For each $n$, let $G_n = (V_n,E_n)$ be some undirected graph on $n$ nodes (which we may consider as a kind of ``base graph''). If $A = A^{(n)}$ denotes its adjacency matrix, then \eqref{eqn:MeanFieldApproxCondition} can be rewritten as
\begin{equation}\label{eqn:conditionForGraphs}
\frac{\sum_{v \in V_n} \mathrm{deg}(v)^2}{2\abs{E_n}} \to \infty.
\end{equation}
Sequences of graphs satisfying \eqref{eqn:conditionForGraphs} are the complete graph, the complete bipartite graph $G_n = K_{m_1(n),m_2(n)}$ for parameters $m_1(n), m_2(n)$ satisfying $m_1(n) + m_2(n) \to \infty$ and $d_n$-regular graphs for $d_n \to \infty$.

The example of the $n$-stars shows that \eqref{eqn:conditionForGraphs} is not sufficient for a central limit theorem of the quadratic form. Indeed, in this case we have $Q(X) = X_1 \sum_{i = 2}^N X_i$, where $1$ is the vertex with degree $(n-1)$. As is easily seen, $Q(X) = 0$ on $\{ X_1 = 0 \}$, and thus if $X$ are Bernoulli distributed, the distribution has an atom which does not vanish for $n \to \infty$.

Finally, let us provide an example of a sequence of graphs for which a central limit theorem can be shown by imposing additional conditions. Here we assume that the random variables $X_i$ are non-negative. In this case, they can be used to define edge weights $w_n(X): E_n \to \mathbb{R}_+$ by $w_n(\{i,j\})(X) = X_i X_j$. Also let $W_n(X) \coloneqq \sum_{e \in E_n} w_n(e)(X)$ be the total edge weight. Note that $W_n(X) = \skal{AX,X}$ for the adjacency matrix $A$ of $G$.

\begin{proposition}\label{proposition:CLTinGraphs}
Let $X$ be a non-negative random variable with $\norm{X}_{\Psi_1} \le M$ and $\IE X = \lambda > 0, \mathrm{Var}(X) = \sigma^2 > 0$, and let $(X_n)_{n \in \IN}$ be a sequence of independent copies. Consider a sequence $G_n = (V_n, E_n)$ of graphs with $V_n = \{1,\ldots,n\}$ such that \eqref{eqn:conditionForGraphs} and
\begin{equation}\label{eqn:degreeToZero}
\frac{\left( \sum_{v \in V_n} \mathrm{deg}(v)^3 \right)^2}{\left( \sum_{v \in V_n} \mathrm{deg}(v)^2 \right)^3} \to 0
\end{equation}
hold. 
Then, for the total edge weight $W_n(X)$, we have
\[
\frac{W_n(X) - \IE W_n(X)}{2 \lambda \sigma \Big( \sum_{v \in V_n} \mathrm{deg}(v)^2 \Big)^{1/2}} \Rightarrow \mathcal{N}(0,1).
\]
\end{proposition}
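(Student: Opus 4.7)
The plan is to apply the Hoeffding decomposition to $Q(X) := W_n(X) = \skal{AX,X}$ (where $A$ is the adjacency matrix of $G_n$, symmetric with vanishing diagonal), show that a classical Lyapunov CLT handles the linear part, use Corollary \ref{wHSnc} to show that the quadratic remainder is negligible after normalization, and combine via Slutsky's theorem.

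First I would write out the decomposition from the discussion preceding Lemma \ref{proposition:CLTsubexp}. Since the $X_v$ are i.i.d.\ with $\IE X_v = \lambda$, the coefficients $c_{A,v} = \sum_w a_{vw} \IE X_w = \lambda\, \mathrm{deg}(v)$, hence
\[
W_n(X) - \IE W_n(X) = R_n + 2 L(X), \qquad L(X) = \lambda \sum_{v \in V_n} \mathrm{deg}(v)(X_v - \lambda),
\]
with $R_n := \skal{A(X-\IE X), X - \IE X}$. By independence $\mathrm{Var}(2L(X)) = 4\lambda^2 \sigma^2 \sum_v \mathrm{deg}(v)^2$, so the normalisation in the statement is exactly $\sqrt{\mathrm{Var}(2L(X))}$.

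Next I would prove the CLT for the linear part via Lyapunov's theorem with $\delta = 1$. Since $\lVert X_v \rVert_{\Psi_1} \le M$ implies $m_3 := \IE \abs{X_1 - \lambda}^3 < \infty$, the Lyapunov ratio is
\[
\frac{\sum_v \lambda^3 \mathrm{deg}(v)^3 m_3}{\bigl(\lambda^2 \sigma^2 \sum_v \mathrm{deg}(v)^2 \bigr)^{3/2}} = \frac{m_3}{\sigma^3} \cdot \frac{\sum_v \mathrm{deg}(v)^3}{\bigl(\sum_v \mathrm{deg}(v)^2\bigr)^{3/2}},
\]
which is the square root of the quantity in \eqref{eqn:degreeToZero} and therefore tends to $0$. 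This yields
\[
\frac{2 L(X)}{2 \lambda \sigma \bigl(\sum_v \mathrm{deg}(v)^2 \bigr)^{1/2}} \Rightarrow \mathcal{N}(0,1).
\]

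For the quadratic remainder $R_n$, I would apply Corollary \ref{wHSnc} with $d = 2$ and $\alpha = 1$ to the symmetric $2$-tensor $A$ (which has vanishing diagonal) and use that $\lVert A \rVert_{\mathrm{HS}}^2 = \sum_{i,j} a_{ij}^2 = \sum_v \mathrm{deg}(v)$. This gives
\[
\lVert R_n \rVert_{\Psi_{1/2}} \le C M^2 \lVert A \rVert_{\mathrm{HS}} = C M^2 \Bigl(\sum_v \mathrm{deg}(v)\Bigr)^{1/2},
\]
so $R_n = O_P\bigl((\sum_v \mathrm{deg}(v))^{1/2}\bigr)$. Hypothesis \eqref{eqn:conditionForGraphs} says precisely $\sum_v \mathrm{deg}(v) / \sum_v \mathrm{deg}(v)^2 \to 0$, hence the ratio $R_n / \bigl(2 \lambda \sigma (\sum_v \mathrm{deg}(v)^2)^{1/2}\bigr)$ tends to $0$ in probability. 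Slutsky's theorem then yields the claimed convergence.

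The main point to watch is simply matching scales: the single quantity $\sum_v \mathrm{deg}(v)^2$ appearing in the normalisation must play two roles simultaneously, and one verifies that \eqref{eqn:conditionForGraphs} is exactly what is needed to make the quadratic remainder negligible against $\lVert A \rVert_{\mathrm{HS}}$, while \eqref{eqn:degreeToZero} is exactly the Lyapunov condition making $L(X)$ asymptotically Gaussian. No machinery beyond Corollary \ref{wHSnc} and the classical Lyapunov CLT is required.
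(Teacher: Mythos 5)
Your proof is correct and follows essentially the same route as the paper: Hoeffding decomposition, Lyapunov CLT for the linear part (with \eqref{eqn:degreeToZero} giving exactly the $\delta=1$ Lyapunov ratio), negligibility of the degenerate quadratic chaos at scale $\lVert A \rVert_{\mathrm{HS}}$ via \eqref{eqn:conditionForGraphs}, and Slutsky. The only cosmetic difference is that you bound the remainder by the $\Psi_{1/2}$-estimate of Corollary \ref{wHSnc} where the paper invokes Lemma \ref{proposition:CLTsubexp}; both are consequences of Theorem \ref{thmch} and yield the same $O_P(\lVert A\rVert_{\mathrm{HS}})$ bound.
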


Note that $W_n(X)$ is neither a sum of independent random variables, nor can be it written as a sum of an $m$-dependent sequence, since $w(e)(X)$ and $w(f)(X)$ are dependent whenever $e \cap f \neq \emptyset$.

In the case that $X \sim \mathrm{Ber}(p)$, the quantity $W_n(X)$ has a nice interpretation. If we interpret $X_v = 0$ as a failed vertex in the ``base graph'' $G_n$, $W_n(X)$ is the number of edges in the subgraph that is induced by the (random) vertex set $\{v \in V_n : X_{v} = 1 \}$.

\begin{proof}
Consider the linear approximation given in Lemma \ref{proposition:CLTsubexp}
\[
L(X) \coloneqq \left(\sum_i c_i(A^{(n)})^2 \right)^{-1/2} \sum_{i = 1}^n c_i(A^{(n)}) X_i.
\]
It is also easy to see that condition \eqref{eqn:degreeToZero} implies Lyapunov's condition with $\delta = 1$. Consequently, by Lindeberg's central limit theorem
\[
L(X) \Rightarrow \mathcal{N}(0,1).
\]
The claim now easily follows by combining Lemma \ref{proposition:CLTsubexp} and Slutsky's theorem.
\end{proof}

It should be possible to extend the result to any sequence of random graphs satisfying \eqref{eqn:conditionForGraphs} and \eqref{eqn:degreeToZero} by conditioning. Moreover, with appropriately modified conditions, by a more refined analysis it is possible to vary the sub-exponential constant $M$ with $n$. We omit the details.

\section{The multilinear case: Proof of Theorem \ref{thmch}}\label{section:ProofThm13}
To begin with, let us introduce some notation. Define $[n] \coloneqq \{1, \ldots, n \}$, and let $\mathbf{i} = (i_1, \ldots, i_d) \in [n]^d$ be a multiindex. For any subset $C \subseteq [d]$ with cardinality $\abs{C} > 1$, we may introduce the ``generalized diagonal'' of $[n]^d$ with respect to $C$ by
\begin{equation}\label{gendiag}
\{\mathbf{i} \in [n]^d : i_k = i_l \ \text{for all} \ k,l \in C \}.
\end{equation}
This notion of generalized diagonals naturally extends to $d$-tensors $A = (a_\mathbf{i})_{\mathbf{i} \in [n]^d}$ (obviously, the generalized diagonal of $A$ with respect to $C$ is the set of coefficients $a_\mathbf{i}$ such that $\mathbf{i}$ lies on the generalized diagonal of $[n]^d$ with respect to $C$).
If $d = 2$ and $C = \{1,2\}$, this gives back the usual notion of the diagonal of an $n \times n$ matrix. Moreover, write
$$[n]^{\underline{d}} \coloneqq \{\mathbf{i} \in [n]^d : i_1, \ldots, i_d \ \text{are pairwise different} \}.$$

If $A, B$ are $d$-tensors, we define $\langle A, B \rangle = \sum_{\mathbf{i} \in [n]^d} a_\mathbf{i} b_\mathbf{i}$. Given a set of $d$ vectors $v^1, \ldots, v^d \in \mathbb{R}^n$, we write $v^1 \otimes \ldots \otimes v^d$ for the outer product
$$(v^1 \otimes \ldots \otimes v^d)_{i_1 \ldots i_d} \coloneqq \prod_{j=1}^{d} v^j_{i_j}.$$
In fact, $v^1 \otimes \ldots \otimes v^d$ is a $d$-tensor. In particular, we may regard $A$ as a multilinear form by setting $A(v^1, \ldots, v^d) \coloneqq \langle A, v^1 \otimes \ldots \otimes v^d \rangle$ for any $v^1, \ldots, v^d \in \mathbb{R}^n$.

The latter idea may be generalized by noting that any partition $\mathcal{J} = \{J_1,\ldots, J_k\}$ of $[d]$ induces a partition of the space of $d$-tensors as follows. Identify the space of all $d$-tensors with $\mathbb{R}^{n^d}$ and decompose
\begin{equation}\label{idf}
\mathbb{R}^{n^d} \cong \bigotimes_{i = 1}^k \mathbb{R}^{n^{J_i}} \cong  \bigotimes_{i = 1}^k \bigotimes_{j \in J_i} \mathbb{R}^n.
\end{equation}
For any $x = x^{(1)} \otimes \ldots \otimes x^{(k)}$, the identification with a $d$-tensor is given by $x_{J_1,\ldots,J_d} = \prod_{l = 1}^k x^{(l)}_{J_{I_l}}$. For example, for $d = 4$ and $\mathcal{I} = \{ \{1,4\}, \{2,3\} \}$ we have two matrices $x,y$ and $x_{J_1,J_2,J_3,J_4} = x_{J_1 J_4} y_{J_2 J_3}$. Using this representation, any $d$-tensor $A$ can be trivially identified with a linear functional on $\mathbb{R}^{n^d}$ via the standard scalar product, i.\,e.
\[
	Ax = A\left(x^{(1)} \otimes \ldots \otimes x^{(k)}\right) = \skal{A, x^{(1)} \otimes \ldots \otimes x^{(k)}} = \sum_{\mathbf{i} \in [n]^d} a_{\mathbf{i}} \prod_{l = 1}^d x^{(l)}_{\mathbf{i}_{J_l}}.
\]

These identifications give rise to a family of tensor-product matrix norms: for any partition $\mathcal{J} \in P_d$, define a norm on the space \eqref{idf} by
\[
	\norm{x}_{\mathcal{J}} \coloneqq \norm{x^{(1)} \otimes \ldots \otimes x^{(k)}}_{\mathcal{J}} \coloneqq \max_{i = 1, \ldots, k} \norm{x^{(i)}}_2.
\]
Now, we may define $\norm{A}_{\mathcal{J}}$ as the the operator norm with respect to $\norm{\cdot}_{\mathcal{J}}$:
\begin{equation}\label{tensornorms}
	\norm{A}_{\mathcal{J}} = \sup_{\norm{x}_{\mathcal{J}} \le 1} \abs{Ax}.
\end{equation}
This family of tensor norms agrees with the definitions in \cite{La06} and \cite{AW15} (among others).

Next we extend these definitions to a family of norms $\norm{A}_{\mathcal{J}}$ where $A$ is a $d$-tensor but $\mathcal{J} \in P_{qd}$ for some $q \in \mathbb{N}$. To this end, we first embed $A$ into the space of $qd$-tensors. Indeed, denote by $e_q(A)$ the $qd$-tensor given by
\begin{equation}\label{embed}
(e_q(A))_\mathbf{i} \coloneqq \begin{cases}
a_{i_{1}i_{q+1}i_{2q+1} \ldots i_{(k-1)q+1}} & \text{if} \ i_{kq+j} = i_{kq+1} \ \forall k = 0, \ldots, d-1 \ \forall j = 2, \ldots, q \\
0 & \text{else.}
\end{cases}
\end{equation}
In other words, we divide $\mathbf{i} \in [n]^{qd}$ into $d$ consecutive blocks with $q$ indices in each block $(i_1, \ldots, i_q), (i_{q+1}, \ldots, \linebreak[3] i_{2q}), \ldots$ and only consider such indices for which all elements of these blocks take the same value. In fact, this is an intersection of $d$ ``generalized diagonals''. Now we set
\begin{equation}\label{normgen}
\lVert A \rVert_\mathcal{J} \coloneqq \lVert e_q(A) \rVert_{\mathcal{J}}.
\end{equation}
For $q=1$, this definition trivially agrees with \eqref{tensornorms}.

\begin{bemerkung}\label{monotone}
The norms \eqref{normgen} are monotone with respect to the underlying partition in the following sense. For any two partitions $\mathcal{I} = \{I_1, \ldots, I_\mu \}$ and $\mathcal{J} = \{J_1, \ldots, J_\nu \}$ of $[qd]$, we say that $\mathcal{I}$ is finer than $\mathcal{J}$ (and write $\mathcal{I} \preccurlyeq \mathcal{J}$) if for any $j = 1, \ldots, \mu$ there is a $k \in \{1, \ldots, \nu \}$ such that $I_j \subseteq J_k$. If $\mathcal{I} \preccurlyeq \mathcal{J}$, we have $\lVert A \rVert_\mathcal{I} \le \lVert A \rVert_\mathcal{J}$.
In particular, we always have
\begin{equation}\label{normsmon}
\lVert A \rVert_{\{\{1\}, \ldots, \{qd\} \}} \le \lVert A \rVert_\mathcal{J} \le \lVert A \rVert_{\{1, \ldots, qd \}}.
\end{equation}
\end{bemerkung}

In view of \eqref{normsmon}, the two ``extreme'' norms corresponding to the coarsest and the finest partition of $[qd]$ deserve special attention. Firstly, it is elementary that 
\begin{equation}\label{pHS}
\lVert A \rVert_{\{1,\ldots,qd\}} = \lVert e_q(A) \rVert_{\mathrm{HS}} = \lVert A \rVert_{\mathrm{HS}} = \Big(\sum_{\mathbf{i} \in [n]^d} a_\mathbf{i}^2\Big)^{1/2}.
\end{equation}
Here, $\lVert \cdot \rVert_{\mathrm{HS}}$ denotes the Hilbert--Schmidt norm. Secondly, we have
$$\lVert A \rVert_{\{\{1\}, \ldots, \{qd\} \}} = \lVert e_q(A) \rVert_{\mathrm{op}} = \begin{cases}\norm{A}_\mathrm{op} & q=1\\ \max_{i,j} \abs{a_{ij}} & q \ge 2\end{cases},$$
see Lemma \ref{lemma:1...qdnorm}.

To prove Theorem \ref{thmch}, we furthermore need some auxiliary results. The first one compares the moments of sums of random variables with finite Orlicz norms to moments of Gaussian polynomials and the second one provides the estimates for multilinear forms in Gaussian random variables.

\begin{lemma}[\cite{AW15}, Lemma 5.4]\label{comp}
For any positive integer $k$ and any $p \ge 2$, if $Y_1, \ldots, Y_n$ are independent symmetric random variables with $\lVert Y_i \rVert_{\psi_{2/k}} \le M$, then
$$\big\lVert \sum_{i=1}^n a_iY_i \big\rVert_p \le C_k M \big\lVert \sum_{i=1}^n a_i g_{i1} \cdots g_{ik}\big\rVert_p,$$
where $g_{ij}$ are i.i.d. $\mathcal{N}(0,1)$ variables.
\end{lemma}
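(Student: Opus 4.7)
The plan is to reduce this to a moment comparison and then match both sides term by term using Rosenthal-type moment estimates on the left and Latał{}a-type chaos lower bounds on the right. By the equivalence of the $\psi_{2/k}$-norm with moment growth (cf.\ Appendix \ref{section:OrliczNorms}; essentially Lemma \ref{lemma:PsiAlphaAndGrowth}), the hypothesis $\lVert Y_i \rVert_{\psi_{2/k}} \le M$ translates into
\[
\lVert Y_i \rVert_p \le C_k M \, p^{k/2} \qquad \text{for all } p \ge 2,
\]
which is (up to the constant $C_k$) the same growth rate as the moments of a single product $g_1 \cdots g_k$ of $k$ independent standard Gaussians, since $\lVert g_1 \cdots g_k \rVert_p = \lVert g \rVert_p^k \asymp p^{k/2}$.

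First I would upper bound the left-hand side. Since the $Y_i$ are symmetric and independent, a Rosenthal/Hoffmann--J{\o}rgensen type inequality (applied with the moment estimates above) yields, for some constant $C_k$,
\[
\Big\lVert \sum_{i=1}^n a_i Y_i \Big\rVert_p \le C_k M \Big( \sqrt{p}\, \Big(\sum_{i=1}^n a_i^2\Big)^{1/2} + p^{k/2} \max_i |a_i| \Big).
\]
The $\sqrt{p}$-term comes from the $L^2$ part of Rosenthal, while the $p^{k/2}$-term reflects the $\psi_{2/k}$-tail of each summand (and in particular dominates the $\ell^p$-contribution $(\sum |a_i|^p \lVert Y_i\rVert_p^p)^{1/p}$, which is bounded by $p^{k/2} M \max_i |a_i|$).

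Next I would lower bound the right-hand side using the classical Latał{}a estimate for Gaussian chaos of order $k$: since the chaos $Z = \sum_i a_i g_{i1} \cdots g_{ik}$ has a purely off-diagonal structure (each Gaussian appears in at most one product), one has
\[
\lVert Z \rVert_p \ge c_k \Big( \lVert Z \rVert_2 + p^{k/2} \max_i |a_i|\Big) = c_k \Big( \Big(\sum_i a_i^2\Big)^{1/2} + p^{k/2} \max_i |a_i|\Big),
\]
by orthogonality of the chaos terms and the Latał{}a--Lehec two-sided moment bounds for Gaussian polynomials. Dividing the two inequalities gives $\lVert \sum a_i Y_i \rVert_p \le C_k M \lVert Z \rVert_p$ with a constant $C_k$ depending only on $k$, as desired.

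The main obstacle is choosing the Rosenthal-type inequality precisely enough so that the two terms $\sqrt{p} \lVert a \rVert_2$ and $p^{k/2} \lVert a \rVert_\infty$ match the Latał{}a lower bound term by term; an equally clean alternative is induction on $k$, where $k=1$ is just Khintchine's inequality and the inductive step uses decoupling together with conditional Gaussian comparison, essentially replacing each $Y_i$ by an independent Gaussian factor at a time and tracking the constant. Either route matches the strategy of \cite[Lemma~5.4]{AW15}, which is the result being quoted here.
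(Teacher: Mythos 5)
The paper does not prove this statement itself; it is quoted from \cite[Lemma 5.4]{AW15}, where the argument is an elementary multinomial moment comparison: for even $p = 2m$ one expands $\mathbb{E}\big(\sum_i a_iY_i\big)^{2m}$, observes that by independence and symmetry only terms in which every exponent is even survive, dominates each factor via $\mathbb{E}Y_i^{2j} \le (C_kM)^{2j}\,\mathbb{E}(g_{i1}\cdots g_{ik})^{2j}$ (both sides grow like $(2j)^{jk}$ by the moment characterization of $\lVert \cdot \rVert_{\psi_{2/k}}$, cf.\ Lemma \ref{lemma:PsiAlphaAndGrowth}), and resums, using that the Gaussian products are themselves independent and symmetric. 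Your route --- matching a Rosenthal-type upper bound for the left-hand side against a Lata{\l}a-type lower bound for the right-hand side --- is genuinely different and can be made to work, but as written it has a concrete gap.

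The gap is in the lower bound: $\lVert Z \rVert_p \ge c_k\big(\lVert a\rVert_2 + p^{k/2}\max_i|a_i|\big)$ is too weak for your final ``divide the two inequalities'' step, because the upper bound contains the term $\sqrt{p}\,\lVert a\rVert_2$, and $\sqrt{p}\,\lVert a\rVert_2$ is \emph{not} controlled by $C_k\big(\lVert a\rVert_2 + p^{k/2}\max_i|a_i|\big)$: take $a_i = n^{-1/2}$ and $n = p^k$, so the former is $\sqrt{p}$ while the latter is $O(1)$. The repair is available, since Theorem \ref{Lat} applied to the $k$-tensor $e_k(a)$ yields the stronger bound $\lVert Z\rVert_p \ge c_k\big(p^{1/2}\lVert a\rVert_2 + p^{k/2}\max_i|a_i|\big)$ (coarsest and finest partitions; cf.\ \eqref{pHS} and Lemma \ref{lemma:1...qdnorm}), which does dominate both terms of the upper bound. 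A second, softer issue is the upper bound itself: the inequality $\lVert \sum_i a_iY_i\rVert_p \le C_kM(\sqrt{p}\,\lVert a\rVert_2 + p^{k/2}\max_i|a_i|)$ for $\psi_{2/k}$ variables is true but is essentially of the same depth as the lemma being proved --- within this paper it is \emph{deduced from} Lemma \ref{comp} together with Theorem \ref{Lat} --- and your derivation of the $p^{k/2}\max_i|a_i|$ term from the $\ell^p$ part of Rosenthal is not literal, since $(\sum_i|a_i|^p)^{1/p}$ is not bounded by $\max_i|a_i|$ (only by $\max_i|a_i|^{1-2/p}\lVert a\rVert_2^{2/p}$). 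To avoid circularity one should either establish that bound independently (e.g.\ by truncation plus Bernstein) or fall back on the multinomial comparison, which sidesteps both inputs and is the proof actually given in \cite{AW15}.
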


\begin{satz}[\cite{La06}, Theorem 1]     \label{Lat}
Let $A = (a_\mathbf{i})_{\mathbf{i} \in [n]^d}$ be a $d$-tensor, and let $G_1, \ldots, G_d$ be i.i.d. standard Gaussian random variables in $\mathbb{R}^n$. Then, for every $p \ge 2$,
$$C_d^{-1} \sum_{\mathcal{J} \in P_{d}} p^{|\mathcal{J}|/2} \lVert A \rVert_\mathcal{J} \le \lVert \langle A, G_1 \otimes \ldots \otimes G_d \rangle \rVert_p \le C_d \sum_{\mathcal{J} \in P_{d}} p^{|\mathcal{J}|/2} \lVert A \rVert_\mathcal{J}.$$
\end{satz}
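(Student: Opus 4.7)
I would prove both inequalities by induction on the tensor order $d$, treating the lower and upper bounds by quite different techniques. The base case $d=1$ is classical: if $a \in \mathbb{R}^n$ then $\skal{a,G_1} \sim \mathcal{N}(0,\norm{a}_2^2)$, so $\norm{\skal{a,G_1}}_p$ is comparable to $\sqrt{p}\,\norm{a}_2$. The only partition of $[1]$ is $\mathcal{J} = \{\{1\}\}$ with $\norm{a}_\mathcal{J} = \norm{a}_2$ and $\abs{\mathcal{J}} = 1$, so both bounds reduce to this classical estimate.

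For the lower bound at general $d$, it suffices to show the one-sided estimate $\norm{\skal{A, G_1 \otimes \cdots \otimes G_d}}_p \ge c_d\, p^{|\mathcal{J}|/2} \norm{A}_\mathcal{J}$ for every single partition $\mathcal{J}$, and then absorb the sum into $C_d$ (there are only $B_d$ partitions, $B_d$ being the $d$-th Bell number). Fix $\mathcal{J} = \{J_1,\ldots,J_k\}$ and choose unit tensors $v^{(l)} \in \bigotimes_{j \in J_l} \mathbb{R}^n$ achieving $\norm{A}_\mathcal{J} = A(v^{(1)},\ldots,v^{(k)})$. The strategy is to condition on the orthogonal complements of appropriate lower-dimensional projections of each $G_j$; contracting $A$ against these projections produces a $k$-linear form in $k$ independent standard Gaussians $h_1,\ldots,h_k$ whose $L^p$-norm is of order $p^{k/2} \norm{A}_\mathcal{J}$ by the scalar Gaussian computation applied iteratively (or by Khintchine--Kahane), and Gaussian anti-concentration shows this cannot be cancelled by the orthogonal contribution.

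For the upper bound, the plan is induction on $d$ via a Fubini conditioning argument. Define the random $(d-1)$-tensor $B = B(G_d)$ with entries $B_{i_1 \ldots i_{d-1}} = \sum_{i_d} a_{i_1 \ldots i_d} (G_d)_{i_d}$. Then $\skal{A, G_1 \otimes \cdots \otimes G_d} = \skal{B, G_1 \otimes \cdots \otimes G_{d-1}}$, and applying the induction hypothesis conditionally on $G_d$ gives
\begin{equation*}
\norm{\skal{A,G_1 \otimes \cdots \otimes G_d}}_p \le C_{d-1} \sum_{\mathcal{J}' \in P_{d-1}} p^{|\mathcal{J}'|/2}\, \bigl\lVert \norm{B(G_d)}_{\mathcal{J}'} \bigr\rVert_{L^p(G_d)}.
\end{equation*}
For each $\mathcal{J}' \in P_{d-1}$, the partitions of $[d]$ whose restriction to $[d-1]$ equals $\mathcal{J}'$ are exactly those obtained either by inserting $d$ as a singleton, yielding $\mathcal{J}' \cup \{\{d\}\}$, or by attaching $d$ to some $J_l \in \mathcal{J}'$. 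The core task is therefore to prove, for each $\mathcal{J}'$, an estimate of the form
\begin{equation*}
\bigl\lVert \norm{B(G_d)}_{\mathcal{J}'} \bigr\rVert_{L^p(G_d)} \le C_d \Bigl( \sqrt{p}\,\norm{A}_{\mathcal{J}'\cup\{\{d\}\}} + \sum_{J_l \in \mathcal{J}'} \norm{A}_{(\mathcal{J}' \setminus \{J_l\}) \cup \{J_l \cup \{d\}\}} \Bigr),
\end{equation*}
after which summing over $\mathcal{J}'$ reproduces a sum over $P_d$ of the right form.

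The main obstacle is precisely this last estimate, since $\norm{B(G_d)}_{\mathcal{J}'}$ is the supremum of a Gaussian process indexed by the unit ball of the norm $\norm{\cdot}_{\mathcal{J}'}$, and there is no closed formula for its $L^p$-norm. I would handle it by writing $\norm{B(G_d)}_{\mathcal{J}'} = \sup_{x \in \mathcal{B}_{\mathcal{J}'}} Z_x$ where $Z_x$ is linear in $G_d$ with variance $\norm{A(x,\cdot)}_2^2$, decomposing $Z_x = \mathbb{E}\sup Z + (Z_x - \mathbb{E}\sup Z)$, controlling the deterministic piece $\mathbb{E}\sup_x Z_x$ by the very norms $\norm{A}_{(\mathcal{J}' \setminus \{J_l\}) \cup \{J_l \cup \{d\}\}}$ via a chaining/Gaussian-concentration argument applied to the product structure of $\mathcal{B}_{\mathcal{J}'}$, and estimating the centred supremum by Borell--TIS, which contributes the $\sqrt{p}\,\norm{A}_{\mathcal{J}'\cup\{\{d\}\}}$ term after taking $L^p$. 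Executing this conditioning-plus-Gaussian-process argument cleanly (and keeping track of the $d$-dependent constants so that they remain finite after iterating $d$ times) is the genuine difficulty; the combinatorial bookkeeping that the generated partitions exhaust $P_d$ is straightforward once the analytic estimate is in hand.
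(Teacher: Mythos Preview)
The paper does not prove this statement: it is quoted verbatim as \cite{La06}, Theorem~1, and used as a black box in the proof of Theorem~\ref{thmch}. There is therefore no in-paper proof to compare your proposal against.

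That said, your outline is broadly in the spirit of Lata\l{}a's original argument: the upper bound there is also obtained inductively by conditioning on one Gaussian factor and controlling the $L^p$-norm of the resulting random partition norm $\lVert B(G_d)\rVert_{\mathcal{J}'}$, and the lower bound is indeed obtained one partition at a time. Two points are worth flagging. First, your lower-bound sketch (``condition on orthogonal complements \ldots\ Gaussian anti-concentration shows this cannot be cancelled'') is too vague to be a proof: the difficulty is precisely that the contribution from the orthogonal pieces is not small, and Lata\l{}a handles this by a careful decoupling/comparison argument rather than a direct anti-concentration step. Second, for the upper bound, the estimate $\mathbb{E}\sup_{x\in\mathcal{B}_{\mathcal{J}'}} Z_x \lesssim \sum_l \lVert A\rVert_{(\mathcal{J}'\setminus\{J_l\})\cup\{J_l\cup\{d\}\}}$ that you need is itself nontrivial: Borell--TIS only gives you concentration around the mean, not the mean itself, and a generic chaining bound over the product ball $\mathcal{B}_{\mathcal{J}'}$ does not automatically produce these specific norms. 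Lata\l{}a's proof supplies exactly this step via a separate analytic lemma; without it your inductive scheme does not close.
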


In the proof of Theorem \ref{thmch}, we actually show $L^p$-estimates for $f_{d,A}(X)$. The following proposition provides the link to concentration inequalities. It was originally proven by Adamczak in \cite{Ad06} and \cite{AW15}, while at this point we cite it in the form given in \cite{SS18}, with a small modification to adjust the constant in front of the exponential.

\begin{proposition}\label{normtoconc}
Assume that a random variable $Z$ satisfies for every $p \ge 2$
$$\lVert Z - \mathbb{E}Z \rVert_p \le \sum_{k=1}^{d} (C_kp)^{k/2}$$
for some constants $C_1, \ldots, C_d \ge 0$. Let $L \coloneqq |\{l \colon C_l > 0 \}|$ and $r \coloneqq \min \{l \in \{1, \ldots, d \} \colon C_l > 0\}$. Then, for any $t > 0$
$$\mathbb{P}(\abs{Z - \mathbb{E}Z} \ge t) \le 2 \exp\left(-\frac{\log(2)}{2(L e)^{2/r}} \min_{k=1, \ldots, d} \left\{\frac{t^{2/k}}{C_k}\right\} \right).$$
\end{proposition}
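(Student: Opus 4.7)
The approach is the standard conversion from moment bounds to tail bounds via Markov's inequality. For any $p \ge 2$,
\[
\IP(\abs{Z - \IE Z} \ge t) \le t^{-p}\, \IE \abs{Z - \IE Z}^p \le \Big(t^{-1} \sum_{k=1}^{d} (C_k p)^{k/2}\Big)^p.
\]
Since only $L$ of the summands on the right are nonzero, the first step is to bound the sum by $L \max_{k: C_k > 0} (C_k p)^{k/2}$. In order to obtain an estimate of the form $e^{-p}$, it then suffices to choose $p$ such that $eL \max_k (C_k p)^{k/2} \le t$, which is equivalent to demanding
\[
p \le (eL)^{-2/k}\, \frac{t^{2/k}}{C_k} \qquad \text{for every } k \text{ with } C_k > 0.
\]

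The key observation which makes this constraint uniform in $k$ is the monotonicity of $k \mapsto (eL)^{-2/k}$ (increasing, since $eL \ge e > 1$). Consequently, for every $k \ge r$ one has $(eL)^{-2/k} \ge (eL)^{-2/r}$, and with $\eta \coloneqq \min_{k: C_k > 0} t^{2/k}/C_k$ the choice
\[
p^* \coloneqq (eL)^{-2/r}\, \eta
\]
satisfies the constraint for every relevant $k$ simultaneously. Provided $p^* \ge 2$, Markov's inequality together with the elementary estimate $(eL)^{-2/r} \ge \log(2)/(2(eL)^{2/r})$ (which is equivalent to the trivial bound $\log 2 \le 2$) yields
\[
\IP(\abs{Z - \IE Z} \ge t) \le e^{-p^*} \le 2 \exp\Big(-\frac{\log 2}{2(eL)^{2/r}}\, \eta\Big),
\]
as required.

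The remaining case $p^* < 2$ is covered by the factor of $2$ in the bound: then $\eta < 2(eL)^{2/r}$, hence $\frac{\log 2}{2(eL)^{2/r}}\eta < \log 2$, and therefore $2\exp(-\frac{\log 2}{2(eL)^{2/r}}\eta) > 2 e^{-\log 2} = 1 \ge \IP(\abs{Z - \IE Z} \ge t)$. I do not anticipate any analytic obstacle; the only real bookkeeping is tracking the factor $L$ when collapsing the heterogeneous sum to a single maximum, and invoking the monotonicity of $(eL)^{-2/k}$ in $k$ to reduce the $k$-dependent constraint on $p$ to the single quantity $p^* = (eL)^{-2/r}\eta$.
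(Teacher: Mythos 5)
Your proof is correct: the Markov bound $\IP(\abs{Z-\IE Z}\ge t)\le (t^{-1}\sum_k (C_kp)^{k/2})^p$, the collapse of the sum to $L\max_k(C_kp)^{k/2}$, the choice $p^*=(eL)^{-2/r}\eta$ justified by the monotonicity of $k\mapsto (eL)^{-2/k}$, and the separate treatment of $p^*<2$ via the prefactor $2$ all check out. The paper itself does not prove this proposition but cites \cite{Ad06, AW15, SS18}, and your argument is essentially the standard one from those references.
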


Now we are able to prove Theorem \ref{thmch}.

\begin{proof}[Proof of Theorem \ref{thmch}]
For simplicity, we always write $f(X) \coloneqq f_{d,A}(X)$. Moreover, without loss of generality, we may assume the $X_i$ to be centered.

Let $X^{(1)}, \ldots, X^{(d)}$ be independent copies of the random vector $X$. Take a set of i.i.d. Rademacher variables $(\varepsilon_i^{(j)})$, $i \le n$, $j \le d$, which are independent of the $(X^{(j)})_j$. By standard decoupling and symmetrization inequalities (see \cite[Theorem 3.1.1]{PG99} and \cite[Lemma 1.2.6]{PG99}),
\begin{align*}
\lVert f(X) \rVert_p
\le C_d \Big\|\sum_{\textbf{i} \in [n]^{\underline{d}}} a_{i_1,\ldots,i_d} X_{i_1}^{(1)} \cdots X_{i_d}^{(d)}\Big\|_p 
\le C_d \Big\|\sum_{\textbf{i} \in [n]^{\underline{d}}} a_{i_1,\ldots,i_d} \varepsilon_{i_1}^{(1)}X_{i_1}^{(1)} \cdots \varepsilon_{i_d}^{(d)}X_{i_d}^{(d)}\Big\|_p.
\end{align*}
An iteration of Lemma \ref{comp} together with $\lVert X_i \rVert_{\psi_{2/q}} \le M$ hence leads to
$$\lVert f(X) \rVert_p
\le \ C_d M^d \Big\|\sum_{\textbf{i} \in [n]^{\underline{d}}} a_{i_1,\ldots,i_d} (g_{i_1,1}^{(1)} \cdots g_{i_1,q}^{(1)}) \cdots (g_{i_d,1}^{(d)} \cdots g_{i_d,q}^{(d)})\Big\|_p.$$
Here, $(g_{i,k}^{(j)})$ is an array of i.i.d. standard Gaussian random variables. Rewriting (recall \eqref{embed}) and applying Theorem \ref{Lat} yields
\begin{align*}
\lVert f(X) \rVert_p
\le \ C_d M^d \lVert \langle e_q(A), \otimes_{j=1}^d \otimes_{k=1}^{q} (g_{i,k}^{(j)})_{i\le n} \rangle \rVert_p
\le \ C_d M^d \sum_{\mathcal{J} \in P_{qd}} p^{|\mathcal{J}|/2} \lVert A \rVert_{\mathcal{J}}.
\end{align*}
The proof is now easily completed by applying Proposition \ref{normtoconc}.
\end{proof}

\section{Hanson--Wright-type inequality: Proof of Proposition \ref{proposition:QuadraticFormWithDiagonal}}\label{section:QuadraticForms}
The main task in the proof of Proposition \ref{proposition:QuadraticFormWithDiagonal} is explicitly calculating the norms.

\begin{lemma}\label{lemma:1...qdnorm}
For any $d$-tensor $A$ and $q \ge 2$ we have
\[
\norm{A}_{\{\{1\}, \ldots, \{qd\}\}} = \norm{A}_\infty = \max_{i_1, \ldots, i_d} \abs{a_{i_1,\ldots, i_d}}.
\]
\end{lemma}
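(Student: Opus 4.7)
By definition \eqref{normgen} together with \eqref{tensornorms}, the quantity to compute is
\[
\lVert A \rVert_{\{\{1\},\ldots,\{qd\}\}} = \sup \Bigl\{ \langle e_q(A), x^{(1)} \otimes \cdots \otimes x^{(qd)} \rangle : x^{(l)} \in \mathbb{R}^n, \ \lVert x^{(l)} \rVert_2 \le 1 \Bigr\}.
\]
Since $e_q(A)$ is supported on the intersection of generalized diagonals forced by \eqref{embed}, we can rewrite the pairing as
\[
\langle e_q(A), x^{(1)} \otimes \cdots \otimes x^{(qd)} \rangle = \sum_{\mathbf{j} \in [n]^d} a_{j_1 \ldots j_d} \prod_{k=1}^{d} y^{(k)}_{j_k}, \qquad y^{(k)}_{j} \coloneqq \prod_{l=1}^{q} x^{((k-1)q+l)}_{j}.
\]
The whole proof then reduces to estimating this last expression from above and below by $\lVert A \rVert_\infty$.

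For the lower bound, I would pick $(i_1^*,\ldots,i_d^*)$ attaining $\lVert A \rVert_\infty = \max |a_{i_1 \ldots i_d}|$, set $x^{((k-1)q+l)} = \mathrm{sgn}(a_{i_1^* \ldots i_d^*})^{\mathbbm{1}(k=l=1)} \, e_{i_k^*}$, and observe that all $y^{(k)}$ become indicators of single coordinates, so the sum collapses to $|a_{i_1^* \ldots i_d^*}|$.

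For the upper bound, the key point --- and the place where the assumption $q \ge 2$ enters --- is the crude estimate
\[
\Bigl| \sum_{\mathbf{j}} a_{j_1 \ldots j_d} \prod_{k=1}^{d} y^{(k)}_{j_k} \Bigr| \le \lVert A \rVert_\infty \prod_{k=1}^{d} \lVert y^{(k)} \rVert_1.
\]
So it suffices to show each $\lVert y^{(k)} \rVert_1 \le 1$. By Hölder's inequality applied with exponent $q$ to the $q$ factors defining $y^{(k)}$,
\[
\lVert y^{(k)} \rVert_1 = \sum_{j} \prod_{l=1}^{q} \bigl| x^{((k-1)q+l)}_{j} \bigr| \le \prod_{l=1}^{q} \Bigl( \sum_j \bigl| x^{((k-1)q+l)}_j \bigr|^q \Bigr)^{1/q} = \prod_{l=1}^{q} \lVert x^{((k-1)q+l)} \rVert_q.
\]
Since $q \ge 2$, the monotonicity $\lVert \cdot \rVert_q \le \lVert \cdot \rVert_2$ for counting measure gives $\lVert x^{((k-1)q+l)} \rVert_q \le \lVert x^{((k-1)q+l)} \rVert_2 \le 1$, and therefore $\lVert y^{(k)} \rVert_1 \le 1$ as required.

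The only real subtlety is the $q \ge 2$ hypothesis, which is essential: for $q=1$ one has only $\lVert x^{(l)} \rVert_2 \le 1$ and no reduction of the $qd$ indices into $d$ blocks, so the finest-partition norm is the genuine multilinear/operator norm of $A$, not its entrywise sup. Apart from this, both bounds are quite short, so I do not expect any further obstacle.
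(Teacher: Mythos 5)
Your proof is correct and follows essentially the same route as the paper: Dirac deltas for the lower bound, and a block-wise H\"older argument for the upper bound. The only cosmetic difference is that the paper bounds $q-2$ of the factors in each block by $1$ and applies Cauchy--Schwarz to the remaining two, whereas you apply the $q$-fold H\"older inequality together with $\lVert \cdot \rVert_q \le \lVert \cdot \rVert_2$; both correctly isolate where $q \ge 2$ is needed.
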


\begin{proof}
Write $\mathcal{J} = \{\{1\},\ldots, \{qd \}\}$. We have
\begin{align*}
\norm{A}_{\mathcal{J}} &= \sup \Big\lbrace \babs{\sum_{i_1 \ldots i_{qd}} (e_q(A))_{i_1, \ldots, i_{qd}} x^1_{i_1} \cdots x^{qd}_{i_{qd}} } : \abs{x^j} \le 1 \text{ for all } j = 1,\ldots,qd \Big\rbrace \\
&= \sup \Big \lbrace \babs{\sum_{i_1,\ldots,i_d} a_{i_1,\ldots, i_d} x^1_{i_1} \cdots x^q_{i_1} x^{q+1}_{i_2} \cdots x^{2q}_{i_1} \cdots x^{(d-1)q+1}_{i_d} \cdots x^{qd}_{i_d}} : \abs{x^j} \le 1 \Big \rbrace \\
&\le \norm{A}_\infty \sup \Big\lbrace \sum_{i_1, \ldots, i_d} \abs{x^1_{i_1} x^2_{i_1}} \cdots \abs{x^{(d-1)q+1}_{i_d} x^{(d-1)q+2}_{i_d}} : \abs{x^j} \le 1 \Big\rbrace \\
&\le \norm{A}_\infty.
\end{align*}
In the third step, we have iteratively used that for $x^{j}$ with $\abs{x^j} \le 1$ we also have $\abs{x_i^j} \le 1$, and applied the Cauchy--Schwarz inequality $d$ times. \par
To obtain the lower bound, let $l_1, \ldots, l_d$ be the index which achieves the maximum. Let $x^1 = \ldots = x^{q} = \delta_{l_1}$, $x^{q+1} = \ldots = x^{2q} = \delta_{l_2}$ and so on, so that
$$\norm{A}_{\mathcal{J}} \ge \abs{a_{l_1 \cdots l_d}} = \norm{A}_\infty.$$
\end{proof}

The following easy observation helps in calculating the norms $\norm{\cdot}_{\mathcal{J}}$. For any partition $\mathcal{J} = \{ J_1, \ldots, J_k \} \in P_{[qd]}$ we write $\tilde{\mathcal{J}} = \{ \tilde{J}_1, \ldots, \tilde{J}_k \}$ for
\begin{equation}\label{eqn:setstildeJ}
\tilde{J}_j = \{ i \in \{1,\ldots, d\} : J_j \cap \{q(i-1)+1,\ldots,qi\} \neq \emptyset \}.
\end{equation}
That is, the sets $\tilde{J}_j$ indicate which of the $d$ $q$-blocks intersect $J_j$. Note that $\cup_j \tilde{J}_j = [d]$, but $\tilde{\mathcal{J}}$ need not be a partition of $[d]$. In fact, some sets $I$ may even appear more than once (with a slight abuse of notation, we choose to keep the set notation in this case anyway). Note that Remark \ref{monotone} extends from partitions to decompositions (all definitions remain valid, even in case of some sets appearing multiple times). Nevertheless, we have by definition
\begin{equation}\label{eqn:ProjectionEquality}
\norm{A}_{\mathcal{J}} = \norm{A}_{\tilde{J}} \coloneqq \sup \bbrace{\sum_{i_1, \ldots, i_d} a_{i_1 \ldots i_d} \prod_{j = 1}^k x_{\mathbf{i}_{\tilde{J}_j}}^{(j)} : \norm{x_{\mathbf{i}_{\tilde{J}_j}}^{(j)}}_2 \le 1},
\end{equation}
i.\,e. the norm does not depend on $\mathcal{J}$, but on its ``projection'' $\tilde{\mathcal{J}}$. We will use this observation in the next lemma to calculate the norms $\norm{A}_\mathcal{J}$ for quadratic forms (i.\,e. $d=2$) and any $q \ge 2$.

\begin{lemma}\label{lemma:restlichenormen}
	Let $A$ be a symmetric matrix, $q \ge 2$ and $\mathcal{J}$ be a partition of $[2q]$. 
	\begin{enumerate}
		\item If $\tilde{\mathcal{J}}$ contains $\{1,2\}$ two or more times, then $\norm{A}_{\mathcal{J}} = \norm{A}_\infty$.
		\item If $\tilde{\mathcal{J}}$ contains $\{1,2\}$ and $\{1\}$ and $\{2\}$, then $\norm{A}_{\mathcal{J}} = \norm{A}_\infty$.
		\item If $\tilde{\mathcal{J}} = \{ \{1,2\}, \{1\}, \ldots, \{1\} \}$ or $\tilde{\mathcal{J}} = \{ \{1,2\}, \{2\}, \ldots, \{2\} \}$, then $\norm{A}_{\mathcal{J}} = \max_i \norm{A_{i\cdot}}_2$.
		\item If $\tilde{\mathcal{J}}$ comprises $l$ times $\{1 \}$ and $k$ times $\{2 \}$ for $k \ge 2, l \ge 2$, then $\norm{A}_{\mathcal{J}} = \norm{A}_\infty$. On the other hand, if $l = 1, k \ge 2$ or $k = 1, l \ge 2$ we have $\norm{A}_{\mathcal{J}} = \max_i \norm{A_{i \cdot}}_2$.
    \item If $\tilde{\mathcal{J}} = \{ \{1\}, \{2\} \}$, then $\norm{A}_{\mathcal{J}} = \norm{A}_{\mathrm{op}}$. 
    \item We have $\norm{A}_{\{[qd]\}} = \norm{A}_{\mathrm{HS}}$.
	\end{enumerate}
	
\end{lemma}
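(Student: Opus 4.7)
My plan is to verify the six items by a case-by-case computation, using the projected reformulation from \eqref{eqn:ProjectionEquality}: one writes $\norm{A}_{\mathcal{J}} = \sup \sum_{i_1, i_2} a_{i_1 i_2} \prod_j x^{(j)}_{\mathbf{i}_{\tilde{J}_j}}$ with the supremum taken over $\norm{x^{(j)}}_2 \le 1$, where $x^{(j)}$ is a vector in $\mathbb{R}^n$ when $\tilde{J}_j$ is a singleton and an $n \times n$ matrix with unit Frobenius norm when $\tilde{J}_j = \{1,2\}$. Item (vi) is precisely \eqref{pHS}, and item (v) reduces immediately to the identity $\sup_{\norm{y}_2, \norm{z}_2 \le 1} y^T A z = \norm{A}_{\mathrm{op}}$, so the real work lies in (i)--(iv).

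Two elementary tools will drive the upper bounds. First, every single factor $|x^{(j)}_{\mathbf{i}_{\tilde{J}_j}}|$ is at most $1$, since it is a coordinate of a unit Euclidean or Frobenius vector. Second, if $w = z^{(1)} \odot \cdots \odot z^{(m)}$ denotes the componentwise product of $m$ vectors with $\norm{z^{(t)}}_2 \le 1$, then H\"older's inequality yields $\norm{w}_{2/m} \le 1$, and hence $\norm{w}_1 \le 1$ as soon as $m \ge 2$. With these in hand, I would dispose of (i)--(iv) as follows. For (i), bound all but two of the $\{1,2\}$-factors pointwise by $1$, extract $\norm{A}_\infty$, and apply Cauchy--Schwarz in $(i_1,i_2)$ to the surviving two matrix factors. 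For (ii), the same pointwise reduction leaves $\sum |a_{i_1 i_2}| |X_{i_1 i_2}| |y_{i_1}| |z_{i_2}|$, which one bounds by $\norm{A}_\infty \norm{X}_{\mathrm{HS}} \norm{y}_2 \norm{z}_2$ via a single Cauchy--Schwarz in $(i_1,i_2)$. For (iii), apply Cauchy--Schwarz to the inner $i_2$-sum to obtain the factor $\norm{A_{i_1\cdot}}_2$, then pull out $\max_{i_1} \norm{A_{i_1\cdot}}_2$ and bound the residual $\sum_{i_1} \norm{X_{i_1\cdot}}_2 |y^{(1)}_{i_1}| \cdots |y^{(k-1)}_{i_1}|$ by $\norm{X}_{\mathrm{HS}} \norm{y^{(1)}}_2 \le 1$, using Cauchy--Schwarz in $i_1$ together with the pointwise bound on the remaining $y^{(s)}$. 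For (iv), the sub-case $l,k \ge 2$ pairs two factors on each side and applies Cauchy--Schwarz separately over $i_1$ and over $i_2$ to yield $\norm{A}_\infty$; the sub-case $l = 1, k \ge 2$ (and its mirror $k=1, l \ge 2$) instead invokes the $\ell^1$-bound on $w := \prod_t z^{(t)}$ to compress the inner $i_2$-sum, giving $\left| \sum_{i_1,i_2} a_{i_1 i_2} y_{i_1} w_{i_2} \right| \le \max_{i_2} |y^T A_{\cdot i_2}| \le \max_i \norm{A_{i\cdot}}_2$.

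The matching lower bounds are straightforward constructions: for $\norm{A}_\infty$ I would take every factor to be a standard basis vector (or its natural matrix analogue) aligned with a maximizing pair $(i_1^*, i_2^*)$; for $\max_i \norm{A_{i\cdot}}_2$ I would fix a row index $i^*$ attaining the maximum and set the $\{1,2\}$-matrix (or, in case (iv), the lone $\{1\}$-vector $y$) proportional to $A_{i^*\cdot}$, with the remaining factors equal to $e_{i^*}$; for $\norm{A}_{\mathrm{op}}$ I would use the leading eigenvectors of the symmetric matrix $A$. The only place requiring genuine care is the $l=1$ (or $k=1$) sub-case of (iv): a naive Cauchy--Schwarz in $i_1$ would only deliver the looser bound $\norm{A}_{\mathrm{HS}}$, and sharpening to $\max_i \norm{A_{i\cdot}}_2$ genuinely requires exploiting the product structure of $w$ through the H\"older-based $\ell^1$-bound. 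I expect this to be the sole non-routine obstacle; everything else is bookkeeping.
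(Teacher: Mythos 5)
Your proposal is correct and follows essentially the same route as the paper: a case-by-case verification via the projected norms \eqref{eqn:ProjectionEquality}, with upper bounds obtained by bounding surplus factors pointwise by $1$ and applying Cauchy--Schwarz (your H\"older-based $\ell^1$-bound on the product $w$ in case (4), $l=1$, is exactly the step the paper uses there), and lower bounds via Dirac deltas, row-proportional test vectors and eigenvectors. The only cosmetic difference is that the paper dispatches case (2) and the lower bound in case (1) by the monotonicity of $\norm{\cdot}_{\mathcal{J}}$ in the partition (Remark \ref{monotone} and Lemma \ref{lemma:1...qdnorm}) rather than by direct computation.
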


\begin{proof}
	To see (1), write $\tilde{\mathcal{J}} = \{\tilde{J}_1, \ldots, \tilde{J}_k \}$, use the triangle inequality and the fact that $\norm{x}_\infty \le \norm{x}_{\mathrm{HS}}$ for any tensor $x$:
\[
  \norm{A}_{\mathcal{J}} = \sup \bbrace{\sum_{i,j} a_{ij} \prod_{k =1}^l x_{\mathbf{i}_{\tilde{J}_k}}^{(k)}} \le \norm{A}_\infty \sup \bbrace{\sum_{i,j} \abs{x_{ij}} \abs{y_{ij}}} \le \norm{A}_\infty,
\]
where the supremum is taken over all unit vectors $x^{(k)}$. The lower bound follows from \eqref{normsmon} and Lemma \ref{lemma:1...qdnorm}.\par
	(2) follows immediately from $\tilde{\mathcal{J}} \preccurlyeq \{\{1,2\}, \{1,2\} \}$. \par
	(3) follows from the triangle and Cauchy--Schwarz inequality:
	\begin{align*}
	\norm{A}_{\mathcal{J}} &\le \sup \bbrace{\sum_i \abs{\prod_{k = 1}^l y^k_i} \abs{\sum_j a_{ij} x_{ij}}} \le \sup \bbrace{\sum_i \abs{\prod_{k = 1}^l y_i^k} \norm{(a_{ij})_j}_2 \norm{x_{i\cdot}}_2} \\
	&\le \max_i \norm{(a_{ij})_j}_2 \sup \bbrace{\abs{\prod_{k = 1}^l y_i^k} \norm{x_{i \cdot}}_2} \le \max_i \norm{(a_{ij})_j}_2.
	\end{align*}
	The lower bound is obtained by choosing $y^1, \ldots, y^l$ as a Dirac delta on the row for which $\max_i \norm{A_{i \cdot}}$ is attained. \par
	To see (4), note that the case $k \ge 2, l \ge 2$ is very similar to the second part. If $l = 1, k \ge 2$ or $k = 1, l \ge 2$, similar arguments as in the third part give for any $x, y^1, \ldots, y^l$ with norm at most one
	\[
	\abs{\sum_{i,j} a_{ij} x_i \prod_k y^k_j} \le \sum_j \abs{\prod_k y_j^k} \abs{\sum_i a_{ij} x_i} \le \sum_j \abs{\prod_k y^k_j \norm{(a_{ij})_j}_2} \le \max_i \norm{(a_{ij})_j}_2.
	\]
	The lower bound again follows by choosing suitable Dirac deltas. \par
  $(5)$ and $(6)$ are obvious from the definitions.
\end{proof}

Actually, we have the equality
\[
\max_{i = 1,\ldots,n} \norm{(a_{ij})_j}_2 = \norm{A}_{2 \to \infty},
\]
where $\left\lVert A \right\rVert_{p \to q} \coloneqq \sup \left \lbrace \norm{Ax}_q : \norm{x}_p \le 1 \right\rbrace.$ For the proof, see \cite[Proposition 6.1]{CTP17}. Especially this yields $\max_{i = 1,\ldots,n} \norm{(a_{ij})_j}_2 \le \norm{A}_{\mathrm{op}}$.

We are now ready to prove Proposition \ref{proposition:QuadraticFormWithDiagonal}. Throughout the rest of this section, for a matrix $A$ let us denote by $A^{\mathrm{od}}$ its off-diagonal and by $A^{\mathrm{d}}$ the diagonal part.

\begin{proof}[Proof of Proposition \ref{proposition:QuadraticFormWithDiagonal}]
Lemma \ref{lemma:restlichenormen} shows that we only need to consider the four norms $\norm{A}_{\mathrm{HS}}, \norm{A}_{\mathrm{op}}, \max_{i  = 1,\ldots,n} \norm{(a_{ij})_j}_2$ and $\norm{A}_\infty$. 
It is easy to see that $\norm{A}_{\mathrm{HS}} \ge \norm{A}_{\mathrm{op}} \ge \max_i \norm{(a_{ij})_j}_2 \ge \norm{A}_\infty$. Thus, we need to determine which partitions give rise to which norms. 

The only partition producing the Hilbert--Schmidt norm is $\mathcal{J}_1 = \{ [qd] \}$, with $\abs{\mathcal{J}_1} = 1$. The operator norm appears for the decomposition $\mathcal{J}_2 = \{ \{1,\ldots,q\}, \{q+1, \ldots, 2q\} \}$ with $\abs{\mathcal{J}_2} = 2$. Moreover, it is easy to see that all partitions $\mathcal{J}_3$ of $[2q]$ giving rise to $\max_{i = 1,\ldots,n} \norm{(a_{ij})_j}_2$ satisfy $\abs{\mathcal{J}_3} \in \{2,\ldots,q+1\}$. Finally, for all $k = 2,\ldots, 2q$ there are partitions $\mathcal{J}_4$ such that $\norm{A}_{\mathcal{J}_4} = \norm{A}_\infty$.

Hence for a diagonal-free matrix $A$ we have by simply plugging in the norms calculated in Lemmas \ref{lemma:1...qdnorm} and \ref{lemma:restlichenormen} into Theorem \ref{thmch}
\begin{align}\label{eqn:inequalityForDiagonalFree}
\mathbb{P}\Big( \Big \lvert \sum_{i,j} a_{ij}(X_i X_j - \IE X_i \IE X_j) \Big\rvert \ge t \Big) \le 2 \exp \Big( - \frac{1}{C} \eta(A,q,t/M^2) \Big),
\end{align}
where
\pagebreak[3]
\begin{align*}
\eta(A,q,t) &= \min \left( \frac{t^2}{\norm{A}_{\mathrm{HS}}^2}, \frac{t}{\norm{A}_{\mathrm{op}}}, \min_{l = 2,\ldots, q+1} \Big( \frac{t}{\max_i \norm{(a_{ij})_j}_2} \Big)^{\frac{2}{l}} , \min_{l = 2,\ldots, 2q} \Big( \frac{t}{\norm{A}_{\infty}} \Big)^{\frac{2}{l}} \right) \notag \\
&=  \min \left( \frac{t^2}{\norm{A}_{\mathrm{HS}}^2}, \frac{t}{\norm{A}_{\mathrm{op}}}, \Big( \frac{t}{\max_i \norm{(a_{ij})_j}_2} \Big)^{\frac{2}{q+1}}, \Big( \frac{t}{\norm{A}_{\infty}} \Big)^{\frac{1}{q}} \right).
\end{align*}
In the last two terms, we can choose the largest $l$ since we can assume that $\frac{t}{\norm{A}_{\mathcal{J}}} \ge 1$ for any partition $\mathcal{J}$, as the minimum is achieved in $\frac{t^2}{\norm{A}_{\mathrm{HS}}^2}$ otherwise.

For matrices with non-vanishing diagonal, we divide the quadratic form into an off-diagonal and a purely diagonal part, i.\,e.
\[
\sum_{i,j} a_{ij} X_i X_j = \sum_{i,j} a^{\mathrm{od}}_{ij} X_i X_j + \sum_{i = 1}^n a^{\mathrm{d}}_{ii} X_i^2.
\]
For brevity, let us define $P(t) \coloneqq \mathbb{P}\Big(\big\lvert \sum_{i,j} a_{ij} X_i X_j - \sum_{i = 1}^n \sigma_i^2 a_{ii} \big \rvert \ge t\Big).$ Use the above decomposition and the subadditivity to obtain
\[
P(t) \le \mathbb{P}\Big( \abs{\sum_{i,j} a^{\mathrm{od}}_{ij} X_i X_j} \ge t/2 \Big) + \mathbb{P}\Big( \abs{\sum_{i =1 }^n a^{\mathrm{d}}_{ii} (X_i^2 - \sigma_i^2)} \ge t/2 \Big) \eqqcolon p_1(t) + p_2(t).
\]
Equation \eqref{eqn:inequalityForDiagonalFree} can be used to upper bound $p_1(t)$ as
\begin{align}			\label{eqn:UpperBoundOffDiag}
p_1(t) \le 2 \exp \left( - \frac{1}{C_2} \eta(A^{od},q,t/M^2) \right).
\end{align}
The diagonal term can be treated by applying Theorem \ref{thmch} for $d = 1$, $q = 4$ and $a = (A_{ii})_{i = 1,\ldots,n}$. Moreover, it is easy to see that we have $\norm{a}_{\{1,2,3,4\}} = \sum_i (a^{\mathrm{d}}_{ii})^2$ (cf. \eqref{pHS}) and $\norm{a}_{\mathcal{J}} = \norm{A^{\mathrm{d}}}_\infty$ for any other decomposition $\mathcal{J}$. Consequently,
\begin{align}			\label{eqn:UpperBoundDiag}
p_2(t) &\le 2 \exp \Big( - \frac{1}{C_1} \min\Big( \frac{t^2}{\norm{A^{\mathrm{d}}}^2_{\mathrm{HS}}}, \frac{t}{\norm{A^{\mathrm{d}}}_\infty}, \Big(\frac{t}{\norm{A^{\mathrm{d}}}_\infty} \Big)^{2/3}, \Big( \frac{t}{\norm{A^{\mathrm{d}}}_\infty} \Big)^{1/2} \Big) \Big) \\
&= 2 \exp \Big( - \frac{1}{C_1} \eta_{1,A^{\mathrm{d}}}(t) \Big).
\end{align}
Thus, by combining \eqref{eqn:UpperBoundOffDiag} and \eqref{eqn:UpperBoundDiag} we have
\begin{align*}
	P(t) 
  \le 4 \exp \left( -C \min(\eta(A^{\mathrm{od}},q,t), \eta_{1,A^{\mathrm{d}}}(t) \right).
\end{align*}
Now it remains to lower bound the minimum by grouping the terms according to the different powers of $t$. This gives
\[
p(t) \le 4 \exp\Big( - \frac{1}{C} \tilde{\eta}(A,q,t/M) \Big),
\]
where
{\scriptsize
\[
\tilde{\eta}(A,q,t) \coloneqq \min \left( \frac{t^2}{\norm{A}^2_{\mathrm{HS}}}, \frac{t}{\max(\norm{A^{\mathrm{od}}}_{\mathrm{op}}, \norm{A^\mathrm{d}}_\infty)}, \Big( \frac{t}{\max_{i = 1,\ldots,n} \norm{(a_{ij})_j}_2} \Big)^{2/(q+1)}, \left( \frac{t}{\norm{A}_\infty} \right)^{1/q} \right).
\]}Lastly, from the characterization $\norm{A}_{\mathrm{op}} \coloneqq \sup_{x \in S^{n-1}} \abs{\skal{x,Ax}}$ it can be easily seen that the inequalities $\norm{A^{\mathrm{d}}}_{\infty} \le \norm{A}_{\mathrm{op}} $ and $\norm{A^{\mathrm{od}}}_{\mathrm{op}} \le 2 \norm{A}_{\mathrm{op}}$ hold, and the constant $4$ can be changed to $2$ by adjusting the constant in the exponent.
\end{proof}

\section{The polynomial case: Proof of Theorem \ref{thmpol}}\label{section:ProofThm16}
Let us now treat the case of general polynomials $f(X)$ of total degree $D \in \mathbb{N}$. Before we start, we need to discuss some more properties of the norms $\lVert A \rVert_{\mathcal{J}}$. To this end, recall the Hadamard product of two $d$-tensors $A, B$ given by $A \circ B \coloneqq (a_\mathbf{i} b_\mathbf{i})_{\mathbf{i} \in [n]^d}$ (pointwise multiplication). If we interpret a $d$-tensor as a function $[n]^d \to \mathbb{R}$, we may define ``indicator matrices'' $1_C$ for a set $C \subseteq [n]^d$ by setting $1_C = (a_{\textbf{i}})_{\textbf{i}}$ with $a_{\textbf{i}} = 1$ if $\textbf{i} \in C$ and $a_{\textbf{i}} = 0$ otherwise. If $|\mathcal{J}| > 1$, we do not have
\begin{equation}\label{uncond}
\lVert A \circ 1_C \rVert_{\mathcal{J}} \le \lVert A \rVert_{\mathcal{J}}
\end{equation}
in general. However, \cite[Lemma 5.2]{AW15} shows a number of situations in which such an inequality does hold.

\begin{lemma}\label{uncondlemma}
	Let $A = (a_{\textbf{i}})_{\mathbf{i} \in [n]^d}$ be a $d$-tensor.
	\begin{enumerate}
		\item If $C = \{\mathbf{i} \colon i_{k_1} = j_1, \ldots, i_{k_l} = j_l \}$ for some $1 \le k_1 < \ldots < k_l \le d$ (``generalized row''), then \eqref{uncond} holds.
		\item If $C = \{\mathbf{i} \colon i_k = i_l \ \forall k, l \in K \}$ for some $K \subset [d]$ (``generalized diagonal''), then \eqref{uncond} holds.
		\item If $C_1, C_2 \subset [n]^d$ are such that \eqref{uncond} holds, then so is $C_1 \cap C_2$.
	\end{enumerate}
\end{lemma}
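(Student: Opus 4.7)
The plan is to handle the three claims in the order (3), (1), (2), since (3) gives a useful reduction for (2).

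For part (3), I would use the pointwise identity $1_{C_1 \cap C_2} = 1_{C_1} \circ 1_{C_2}$, yielding $A \circ 1_{C_1 \cap C_2} = (A \circ 1_{C_1}) \circ 1_{C_2}$. Applying the hypothesis for $C_2$ to the tensor $B \coloneqq A \circ 1_{C_1}$, and then the hypothesis for $C_1$ to $A$, immediately gives $\norm{A \circ 1_{C_1 \cap C_2}}_{\mathcal{J}} \le \norm{A \circ 1_{C_1}}_{\mathcal{J}} \le \norm{A}_{\mathcal{J}}$.

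For part (1), write $\mathcal{J} = \{J_1, \ldots, J_k\}$ and fix test vectors $x^{(m)}$ with $\norm{x^{(m)}}_2 \le 1$ nearly attaining the supremum in $\norm{A \circ 1_C}_{\mathcal{J}}$. The idea is to zero out those components of each $x^{(m)}$ indexed by tuples violating the fixed constraints $i_{k_s} = j_s$ at those positions $k_s \in J_m$. The resulting vectors $y^{(m)}$ satisfy $\norm{y^{(m)}}_2 \le \norm{x^{(m)}}_2$ but produce the same pairing, so $\langle A \circ 1_C, x^{(1)} \otimes \cdots \otimes x^{(k)}\rangle = \langle A, y^{(1)} \otimes \cdots \otimes y^{(k)}\rangle$. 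Taking a supremum on the right then bounds it by $\norm{A}_{\mathcal{J}}$.

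For part (2), I would first use (3) to reduce to pairs: if $K = \{k_1, \ldots, k_r\}$, writing the generalized diagonal as $\bigcap_{s=2}^r \{\mathbf{i}: i_{k_1} = i_{k_s}\}$ shows that it suffices to establish \eqref{uncond} for $K = \{k, l\}$. Two subcases arise. If $k, l$ lie in the same block $J_m$, the same zero-out technique as in part (1) applies, restricting $x^{(m)}$ to the subspace of tensors whose entries at positions corresponding to $k$ and $l$ coincide. If instead $k \in J_{m_1}$, $l \in J_{m_2}$ with $m_1 \neq m_2$, I would split along the common value via $1_C = \sum_{j \in [n]} 1_{\{i_k = i_l = j\}}$ and obtain
\[
\langle A \circ 1_C, x^{(1)} \otimes \cdots \otimes x^{(k)}\rangle = \sum_{j \in [n]} \langle A, x^{(1)} \otimes \cdots \otimes P^{(m_1)}_j x^{(m_1)} \otimes \cdots \otimes P^{(m_2)}_j x^{(m_2)} \otimes \cdots \otimes x^{(k)}\rangle,
\]
where $P^{(m_1)}_j$ (resp.\ $P^{(m_2)}_j$) projects onto the subspace of tensors with the coordinate indexed by $k$ (resp.\ $l$) equal to $j$. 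Bounding each summand by the definition of $\norm{A}_{\mathcal{J}}$ produces a factor of $\norm{P^{(m_1)}_j x^{(m_1)}}_2 \cdot \norm{P^{(m_2)}_j x^{(m_2)}}_2 \cdot \prod_{m' \neq m_1, m_2} \norm{x^{(m')}}_2$. The Cauchy--Schwarz inequality in $j$, combined with the orthogonal decomposition $\sum_j \norm{P^{(m)}_j x}_2^2 = \norm{x}_2^2$, then yields $\sum_j \norm{P^{(m_1)}_j x^{(m_1)}}_2 \norm{P^{(m_2)}_j x^{(m_2)}}_2 \le \norm{x^{(m_1)}}_2 \norm{x^{(m_2)}}_2$, completing the argument.

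The main obstacle will be the cross-block pair case in (2), where the direct zero-out argument breaks down because the diagonal constraint couples different blocks of $\mathcal{J}$. The Cauchy--Schwarz step along the common value $j$ is the essential ingredient; the remaining subcases reduce to bookkeeping around the supremum/restriction correspondence already used in (1).
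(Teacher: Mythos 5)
Your proposal is correct. The paper does not actually prove this lemma --- it imports it by citation from Adamczak--Wolff \cite[Lemma 5.2]{AW15} --- and your argument is the standard one underlying that reference: absorb the indicator into the test vector of the block containing the constrained coordinates (for generalized rows and same-block diagonal pairs), and for a diagonal constraint coupling two different blocks, decompose along the common value $j$ and apply Cauchy--Schwarz together with $\sum_j \lVert P_j^{(m)}x\rVert_2^2 = \lVert x\rVert_2^2$. One pedantic point: to run the reduction of (2) to pairs via (3), and indeed to prove (3) by composing $A \circ 1_{C_1 \cap C_2} = (A\circ 1_{C_1})\circ 1_{C_2}$, you need \eqref{uncond} for $C_2$ to hold for the modified tensor $A\circ 1_{C_1}$, not just for $A$; this is harmless here because your proofs of (1) and the pair case of (2) are uniform in the tensor, but it is worth stating that (3) is used in its ``for all tensors'' form.
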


There is a further situation in which a version of \eqref{uncond} holds. Indeed, for any partition $\mathcal{K} = \{K_1, \ldots, K_a \} \in P_d$ of $[d]$ we define
\begin{equation}\label{LK}
L(\mathcal{K}) = \{\textbf{i} \in [n]^{d} \colon i_k = i_l \Leftrightarrow \exists j \colon k, l \in K_j \}.
\end{equation}
That is, $L(\mathcal{K})$ is the set of those indices for which the partition into level sets is equal to $\mathcal{K}$.

\begin{lemma}\label{lemLK}
	Let $\mathcal{J} \in P_{qd}$, $\mathcal{K} \in P_d$ and $A$ be a $d$-tensor. Then,
	$$\lVert A \circ 1_{L(\mathcal{K})} \rVert_{\mathcal{J}} \le 2^{|\mathcal{K}|(|\mathcal{K}|-1)/2}\lVert A \rVert_{\mathcal{J}}.$$
\end{lemma}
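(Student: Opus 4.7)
The plan is to use an inclusion--exclusion decomposition of $1_{L(\mathcal{K})}$ into indicators of intersections of generalized diagonals, so that Lemma \ref{uncondlemma} applies term-by-term. Write $\mathcal{K} = \{K_1,\ldots,K_a\}$ with $a = |\mathcal{K}|$, and introduce
\[
D(\mathcal{K}) \coloneqq \{\mathbf{i} \in [n]^d \colon i_k = i_l \ \forall j \ \forall k,l \in K_j\},
\]
together with, for each pair $1 \le j < l \le a$, the set $E_{jl} \coloneqq \{\mathbf{i} : i_{k_j^\ast} = i_{k_l^\ast}\}$ obtained from arbitrary fixed representatives $k_j^\ast \in K_j$, $k_l^\ast \in K_l$. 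The defining condition of $L(\mathcal{K})$ is exactly that $\mathbf{i}$ lies in $D(\mathcal{K})$ and fails every $E_{jl}$ for $j<l$, so
\[
1_{L(\mathcal{K})} = 1_{D(\mathcal{K})} \prod_{1 \le j < l \le a}\bigl(1 - 1_{E_{jl}}\bigr) = \sum_{S \subseteq \binom{[a]}{2}} (-1)^{|S|}\, 1_{D(\mathcal{K}) \cap \bigcap_{(j,l) \in S} E_{jl}}.
\]

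The key observation is that every indicator on the right is the indicator of an intersection of generalized diagonals in the sense of Lemma \ref{uncondlemma}(2): the set $D(\mathcal{K})$ is itself an intersection of such diagonals (one for each block $K_j$ with $|K_j|>1$), and each $E_{jl}$ is the generalized diagonal associated with $\{k_j^\ast, k_l^\ast\}$. Hence by parts (2) and (3) of Lemma \ref{uncondlemma},
\[
\bigl\lVert A \circ 1_{D(\mathcal{K}) \cap \bigcap_{(j,l) \in S} E_{jl}} \bigr\rVert_{\mathcal{J}} \le \lVert A \rVert_{\mathcal{J}}
\]
for every $S \subseteq \binom{[a]}{2}$. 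Applying the triangle inequality for the norm $\lVert\cdot\rVert_{\mathcal{J}}$ to the inclusion--exclusion expansion yields
\[
\lVert A \circ 1_{L(\mathcal{K})} \rVert_{\mathcal{J}} \le \sum_{S \subseteq \binom{[a]}{2}} \lVert A \rVert_{\mathcal{J}} = 2^{\binom{a}{2}} \lVert A \rVert_{\mathcal{J}} = 2^{|\mathcal{K}|(|\mathcal{K}|-1)/2}\, \lVert A \rVert_{\mathcal{J}},
\]
which is the desired bound.

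The main obstacle is that the condition defining $L(\mathcal{K})$ is not monotone (it asks for an \emph{exact} equality pattern, not merely "at least these equalities"), so Lemma \ref{uncondlemma}(2) cannot be invoked on $1_{L(\mathcal{K})}$ directly. The inclusion--exclusion step is what converts the exact-pattern condition into a signed combination of at-least-these-equalities conditions, each of which is a genuine generalized diagonal. Once this reformulation is in place, the counting of terms is immediate and gives precisely the stated exponent $\binom{a}{2}$; no bound on individual Möbius coefficients is needed since each term contributes with unit weight after the triangle inequality.
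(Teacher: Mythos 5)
Your proof is correct and follows essentially the same route as the paper: both decompose the exact level-set condition $1_{L(\mathcal{K})}$ into an intersection of generalized diagonals together with $\binom{|\mathcal{K}|}{2}$ negated equalities, and both convert each negation via $1_{\{i\ne j\}}=1-1_{\{i=j\}}$ plus the triangle inequality and Lemma \ref{uncondlemma}(2),(3); your single inclusion--exclusion expansion is just the fully unrolled form of the paper's iterated factor-of-$2$ argument (which the paper carries out on the embedded $qd$-tensor $e_q(A)$ rather than on $A$ directly, a purely cosmetic difference).
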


\begin{proof}
This is a generalization of \cite[Corollary 5.3]{AW15} which corresponds to the case $q = 1$. First note that by definition,
	$$\lVert A \circ 1_{L(\mathcal{K})} \rVert_{\mathcal{J}} = \lVert e_q(A \circ 1_{L(\mathcal{K})}) \rVert_{\mathcal{J}} = \lVert e_q(A) \circ e_q(1_{L(\mathcal{K})}) \rVert_{\mathcal{J}}.$$
	Therefore, it suffices to prove that for any $qd$-tensor $B$,
	$$\lVert B \circ e_q(1_{L(\mathcal{K})}) \rVert_{\mathcal{J}} \le 2^{|\mathcal{K}|(|\mathcal{K}|-1)/2}\lVert B \rVert_{\mathcal{J}}.$$
	To see this, observe that $e_q(1_{L(\mathcal{K})})$ is the indicator matrix of a set $C$ which can be written as an intersection of $|\mathcal{K}|$ generalized diagonals (with the cardinality of the underlying sets of indices in \eqref{gendiag} always being an integer multiple of $q$) and $|\mathcal{K}|(|\mathcal{K}|-1)/2$ sets of the form $\{\mathbf{i}: i_{kq +1} \ne i_{lq +1} \}$ for $k < l$. Recall that
	$$\lVert B \circ 1_{\{i_{kq+1} \ne i_{lq+1} \}} \rVert_\mathcal{J} = \lVert B - B \circ 1_{\{i_{kq+1} = i_{lq+1} \}} \rVert_\mathcal{J} \le 2 \lVert B \rVert_\mathcal{J},$$
	using Lemma \ref{uncondlemma} (2) in the last step. As a consequence, the claim follows by applying Lemma \ref{uncondlemma} (2) again and a generalization of Lemma \ref{uncondlemma} (3).
\end{proof}

Finally, it remains to note that \cite[Lemma 5.1]{AW15} can be generalized as follows.

\begin{lemma}\label{Absch}
Let $A$ be a $d$-tensor, and let $v_1, \ldots, v_d \in \mathbb{R}^n$ be any vectors. Then, for any partition $\mathcal{J} \in P_{qd}$,
$\lVert A \circ \otimes_{i=1}^d v_i \rVert_\mathcal{J} \le \lVert A \rVert_\mathcal{J} \prod_{i=1}^d \lVert v_i \rVert_\infty.$
\end{lemma}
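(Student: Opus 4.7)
The plan is to work with the equivalent supremum description of $\lVert A \rVert_\mathcal{J}$ given in \eqref{eqn:ProjectionEquality}, i.\,e.
\[
\lVert A\rVert_\mathcal{J} = \sup\bbrace{\sum_{\mathbf{i}\in[n]^d} a_\mathbf{i}\prod_{j=1}^k x^{(j)}_{\mathbf{i}_{\tilde J_j}} : \lVert x^{(j)}\rVert_2\le 1 \text{ for all } j},
\]
where $\tilde J_j\subseteq[d]$ is the projection of $J_j\subseteq[qd]$ defined in \eqref{eqn:setstildeJ}. Since $(A\circ\bigotimes_{\ell=1}^d v_\ell)_\mathbf{i}$ differs from $a_\mathbf{i}$ only by the scalar factor $\prod_{\ell=1}^d v_{\ell,i_\ell}$, the idea is to absorb each of these $d$ one-dimensional factors into exactly one of the $k$ test tensors $x^{(j)}$.

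The redistribution relies on the covering property $\bigcup_{j=1}^k \tilde J_j=[d]$. Concretely, I pick for every $\ell\in[d]$ some $\pi(\ell)\in\{1,\ldots,k\}$ with $\ell\in\tilde J_{\pi(\ell)}$ and set $I_j\coloneqq \pi^{-1}(\{j\})$, so that $[d]=\bigsqcup_{j=1}^k I_j$ with $I_j\subseteq\tilde J_j$. For any admissible test tensors $x^{(j)}$ I then define
\[
\tilde x^{(j)}_{\mathbf{i}_{\tilde J_j}} \coloneqq x^{(j)}_{\mathbf{i}_{\tilde J_j}}\prod_{\ell\in I_j} v_{\ell,i_\ell},
\]
which is genuinely a function only of the coordinates $\mathbf{i}_{\tilde J_j}$ (this is exactly why the containment $I_j\subseteq\tilde J_j$ matters). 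The pointwise estimate $\abs{v_{\ell,i_\ell}}\le\lVert v_\ell\rVert_\infty$ immediately yields $\lVert\tilde x^{(j)}\rVert_2\le\prod_{\ell\in I_j}\lVert v_\ell\rVert_\infty$.

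By the disjointness of the $I_j$ together with $\bigcup_j I_j=[d]$, one has the telescoping identity
\[
\prod_{j=1}^k \tilde x^{(j)}_{\mathbf{i}_{\tilde J_j}} = \Big(\prod_{\ell=1}^d v_{\ell,i_\ell}\Big)\prod_{j=1}^k x^{(j)}_{\mathbf{i}_{\tilde J_j}},
\]
which rewrites $\sum_\mathbf{i}(A\circ\bigotimes_\ell v_\ell)_\mathbf{i}\prod_j x^{(j)}_{\mathbf{i}_{\tilde J_j}}$ as $\sum_\mathbf{i} a_\mathbf{i}\prod_j \tilde x^{(j)}_{\mathbf{i}_{\tilde J_j}}$. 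Applying the projection formula to the renormalised tensors $\tilde x^{(j)}/\lVert\tilde x^{(j)}\rVert_2$ then gives
\[
\sum_\mathbf{i} a_\mathbf{i}\prod_j \tilde x^{(j)}_{\mathbf{i}_{\tilde J_j}} \le \lVert A\rVert_\mathcal{J}\prod_{j=1}^k \lVert\tilde x^{(j)}\rVert_2 \le \lVert A\rVert_\mathcal{J}\prod_{\ell=1}^d\lVert v_\ell\rVert_\infty,
\]
and taking the supremum over unit-norm $x^{(j)}$ yields the claim. I do not foresee any serious obstacle; the only subtle point is ensuring the assignment $\pi$ produces disjoint $I_j\subseteq\tilde J_j$ covering $[d]$, which is immediate from $\bigcup_j\tilde J_j=[d]$.
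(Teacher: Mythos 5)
Your proposal is correct and follows essentially the same route as the paper: the paper's proof likewise absorbs each factor $v_\ell$ into one chosen test tensor $x^{(j)}$ with $\ell\in\tilde J_j$ and bounds the resulting $\ell^2$-norms by $\prod_\ell\lVert v_\ell\rVert_\infty$. Your write-up merely makes the assignment $\pi$ and the disjointness of the $I_j$ more explicit than the paper does.
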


\begin{proof}
Recall equations \eqref{eqn:setstildeJ} and \eqref{eqn:ProjectionEquality}. We have
\begin{align*}
\norm{A \circ \otimes_{i = 1}^d v_i}_{\mathcal{J}} &= \sup \bbrace{\sum_{i_1, \ldots, i_{qd}} (e_q(A))_{i_1 \ldots i_{qd}} (e_q(\otimes_{i = 1}^d v_i))_{i_1 \ldots i_{qd}} \prod_{j = 1}^k x_{\mathbf{i}_{J_j}}^{(j)} : \norm{x_{\mathbf{i}_{J_j}}^{(j)}}_2 \le 1} \\
&= \sup \bbrace{\sum_{i_1, \ldots, i_d} a_{i_1 \ldots i_d} v_1^{i_1} \cdots v_d^{i_d} \prod_{j = 1}^k x_{\mathbf{i}_{\tilde{J}_j}}^{(j)} : \norm{x_{\mathbf{i}_{\tilde{J}_j}}^{(j)}}_2 \le 1} \\
&\le \sup \bbrace{\sum_{i_1, \ldots, i_d} a_{i_1 \ldots i_d} \prod_{j = 1}^k x_{\mathbf{i}_{\tilde{J}_i}}^{(j)} : \norm{x_{\mathbf{i}_{\tilde{J}_j}}^{(j)}}_2 \le 1} \prod_{i = 1}^d \norm{v_i}_\infty\\
&= \sup \bbrace{\sum_{i_1, \ldots, i_{qd}} (e_q(A))_{i_1 \ldots i_{qd}} \prod_{j = 1}^k x_{\mathbf{i}_{J_j}}^{(j)} : \norm{x_{\mathbf{i}_{J_j}}^{(j)}}_2 \le 1} \prod_{i = 1}^d \norm{v_i}_\infty  \\
&= \norm{A}_{\mathcal{J}} \prod_{i = 1}^d \norm{v_i}_\infty.
\end{align*}
To see the third step, for each $v_l$ we choose a set $\mathcal{J}_j$ such that $l \in \mathcal{J}_j$ and then define vectors $\tilde{x}_{\mathbf{i}_{\tilde{J}_j}}^{(j)}$ by multiplying $x_{\mathbf{i}_{\tilde{J}_j}}^{(j)}$ by the components of the vectors $v_l$ which were attributed to $\mathcal{J}_j$. In particular, this leads to $\norm{\tilde{x}_{\mathbf{i}_{\tilde{J}_j}}^{(j)}}_2 \le \prod_{l} \norm{v_l}_\infty \norm{x_{\mathbf{i}_{\tilde{J}_j}}^{(j)}}_2$, where the product is taken over all the vectors $v_l$ which were attributed to $x_{\mathbf{i}_{\tilde{J}_j}}^{(j)}$.
\end{proof}

Before we begin with the proof of the concentration results for general polynomials, let us give some definitions. Boldfaced letters will always represent a vector (mostly a multiindex with integer components), and for any vector $\mathbf{i}$ let $\abs{\mathbf{i}} \coloneqq \sum_j i_j$. For the sake of brevity we define
\begin{align*}
I_{m,d} &\coloneqq \{ (i_1, \ldots, i_m) \in \IN^m : \abs{\mathbf{i}} = d \}, \\
I_{m,\le d} &\coloneqq \{ (i_1, \ldots, i_m) \in \IN^m : \abs{\mathbf{i}} \le d \}.
\end{align*}
Given two vectors $\mathbf{i}, \mathbf{k}$ of equal size, we write $\mathbf{k} \le \mathbf{l}$ if $k_j \le l_j$ for all $j$, and $\mathbf{k} < \mathbf{l}$ if $\mathbf{k} \le \mathbf{l}$ and there is at least one index such that $k_j < l_j$. Lastly, by $f \lesssim g$ we mean an inequality of the form $f \le C_{D,q} g$.

\begin{proof}[Proof of Theorem \ref{thmpol}]
We assume $M = 1$. For the general case, given random variables $X_1,\ldots, X_n$ with $\norm{X_i}_{\Psi_{2/q}} \le M$, define $Y_i \coloneqq M^{-1} X_i$. The polynomial $f = f(X)$ can be written as a polynomial $\tilde{f} = \tilde{f}(Y)$ by appropriately modifying the coefficients, i.\,e. multiplying each monomial by $M^{r}$, where $r$ is its total degree. Now it remains to see that $\partial_{i_1 \ldots i_j} \tilde{f}(Y) = M^j \partial_{i_1 \ldots i_j} f(X)$.

\textbf{Step 1.} First, we reduce the problem to generalizations of chaos-type functionals \eqref{eqn:chaos}. Indeed, by sorting according to the total grade, $f$ may be represented as
$$f(x) = \sum_{d=1}^D \sum_{\nu=1}^d \sum_{\textbf{k} \in I_{\nu,d}} \sum_{\textbf{i} \in [n]^{\underline{\nu}}} c_{(i_1,k_1), \ldots, (i_\nu,k_\nu)}^{(d)} x_{i_1}^{k_1} x_{i_2}^{k_2} \cdots x_{i_\nu}^{k_\nu} + c_0,$$
where the constants satisfy $c_{(i_1,k_1), \ldots, (i_\nu,k_\nu)}^{(d)} = c_{(i_{\pi_1},k_{\pi_1}), \ldots, (i_{\pi_\nu},k_{\pi_\nu})}^{(d)}$ for any permutation $\pi \in \mathcal{S}_\nu$. As in \cite{AW15}, by rearranging and making use of the independence of $X_1, \ldots, X_n$, this leads to the estimate
$$|f(X) - \mathbb{E}f(X)| \le \sum_{d=1}^D \sum_{\nu=1}^d \sum_{\textbf{k} \in I_{\nu,d}} \Big|\sum_{\textbf{i} \in [n]^{\underline{\nu}}} a_{\textbf{i}}^{\textbf{k}} (X_{i_1}^{k_1}- \mathbb{E}X_{i_1}^{k_1}) \cdots (X_{i_\nu}^{k_\nu}-\mathbb{E}X_{i_\nu}^{k_\nu})\Big|,$$
where
\[
a_{\textbf{i}}^{\textbf{k}} = \sum_{m=\nu}^{D} \sum_{\substack{k_{\nu+1},\ldots,k_m>0 \\ k_1+\ldots+k_m \le D}} \sum_{\substack{i_{\nu+1}, \ldots, i_m \\ (i_1, \ldots, i_m) \in [n]^{\underline{m}}}} \binom{m}{\nu} c_{(i_1,k_1), \ldots, (i_m,k_m)}^{(k_1+\ldots+k_m)} \prod_{\alpha=1}^m \mathbb{E}X_{i_\alpha}^{k_{i_\alpha}}.
\]

\textbf{Step 2.} Note that $\lVert X_i^k \rVert_{\psi_{2/(qk)}} = \lVert X_i \rVert_{\psi_{2/q}}^k \le 1$. Thus, slightly modifying the proof of Theorem \ref{thmch} (in particular, also using Lemma \ref{comp} for the non-linear terms), we obtain the estimate
\begin{align*}
&\lVert f(X) - \mathbb{E}f(X) \rVert_p \lesssim \sum_{d=1}^D \sum_{\nu=1}^d \sum_{\textbf{k} \in I_{\nu,d}} \Big\|\sum_{\textbf{i} \in [n]^{\underline{\nu}}} a_{\textbf{i}}^{\textbf{k}} (g_{i_1,1}^{(1)} \cdots g_{i_1,q k_1}^{(1)}) \cdots (g_{i_\nu,1}^{(\nu)} \cdots g_{i_\nu,q k_\nu}^{(\nu)})\Big\|_p.
\end{align*}
Here, $(g_{i,k}^{(j)})$ is an array of i.i.d. standard Gaussian random variables.

Moreover, the family $(a_\mathbf{i}^{\mathbf{k}})_{\nu \in \{1,\ldots, d\}, k \in I_{\nu,d}, i \in [n]^{\underline{\nu}}}$ gives rise to a $d$-tensor $A_d$ as follows. Given any index $\mathbf{i} = (i_1, \ldots, i_d)$ there is a unique number $r \in \{1, \ldots, d \}$ of distinct elements $j_1, \ldots, j_r$ with each $j_l$ appearing exactly $k_l$ times in $\mathbf{i}$. Consequently, we set $a_{i_1 \ldots i_d} \coloneqq a_{j_1,\ldots,j_r}^{(l_1,\ldots,l_r)}$, and $A_d = (a_\mathbf{i})_{\mathbf{i} \in [n]^d}$. Note that this is well-defined due to the symmetry assumption.

For any $\mathbf{k} \in I_{\nu,d}$ denote by $\mathcal{K}(\mathbf{k}) = \mathcal{K}(k_1, \ldots, k_\nu) \in P_d$ the partition which is defined by splitting the set $\{1, \ldots, d \}$ into consecutive intervals of length $k_1, \ldots, k_\nu$. In other words, $\mathcal{K}(\mathbf{k}) = \{K_1, \ldots, K_\nu \}$ with $K_l = \{\sum_{i=1}^{l-1} k_i + 1, \sum_{i=1}^{l-1} k_i + 2, \ldots, \sum_{i=1}^{l} k_i \}$, $l = 1, \ldots, \nu$. Now, recalling the definitions of $e_q$ \eqref{embed} and of $L(\mathcal{K})$ \eqref{LK}, by rewriting and applying Lemma \ref{uncondlemma} we obtain
\begin{equation}\label{eqn:LpNormEstimate}
\begin{split}
\lVert f(X) - \mathbb{E}f(X) \rVert_p &\lesssim \sum_{d=1}^D \sum_{\nu=1}^d \sum_{\textbf{k} \in I_{\nu,d}}  \lVert \langle e_q(A_d \circ 1_{L(\mathcal{K}(\mathbf{k}))}), \otimes_{j=1}^\nu \otimes_{k=1}^{q k_j} (g_{i,k}^{(j)})_{i\le n} \rangle \rVert_p\\
&\lesssim \sum_{d=1}^D \sum_{\nu=1}^d \sum_{\textbf{k} \in I_{\nu,d}} \sum_{\mathcal{J} \in P_{qd}} p^{|\mathcal{J}|/2} \lVert A_d \circ 1_{L(\mathcal{K}(k_1, \ldots, k_\nu))} \rVert_{\mathcal{J}}\\
&\lesssim \sum_{d=1}^D \sum_{\mathcal{J} \in P_{qd}} p^{|\mathcal{J}|/2} \lVert A_d \rVert_{\mathcal{J}}.
\end{split}
\end{equation}

\textbf{Step 3.} Next, we replace $\lVert A_d \rVert_\mathcal{J}$ by $\lVert \mathbb{E} f^{(d)}(X) \rVert_\mathcal{J}$. To this end, first note that for $\mathbf{i} \in [n]^d$ with distinct indices $j_1, \ldots, j_\nu$ which are taken $l_1, \ldots, l_\nu$ times, we have
\begin{align*}
&\mathbb{E} \frac{\partial^d f}{\partial x_{i_1} \ldots \partial x_{i_d}}(X) = \sum_{\mathbf{k} : \mathbf{k} \ge \mathbf{l}} \sum_{m=\nu}^D \sum_{\substack{k_{\nu+1}, \ldots, k_m > 0 \\ k_1 + \ldots + k_m \le D}} \sum_{\substack{j_{\nu+1}, \ldots, j_m \\ (j_1, \ldots, j_m) \in [n]^{\underline{m}}}} \\ &\left(\binom{m}{\nu} \nu! c_{(j_1,k_1), \ldots, (j_m,k_m)}^{(k_1+\ldots+k_m)} \prod_{\alpha=1}^\nu \mathbb{E} X_{j_\alpha}^{k_\alpha-l_\alpha} \prod_{\alpha=\nu+1}^m \mathbb{E}X_{j_\alpha}^{k_\alpha} \prod_{\alpha=1}^\nu \frac{k_\alpha!}{(k_\alpha-l_\alpha)!} \right)\\
&= \nu!l_1! \cdots l_\nu!a_{i_1, \ldots, i_d}  + R_\mathbf{i}^{(d)},
\end{align*}
where the ``remainder term'' $R_\mathbf{i}^{(d)}$ corresponds to the set of indices $\mathbf{k}$ satisfying $\mathbf{k} > \mathbf{l}$. If $d = D$, we clearly have $R_\mathbf{i}^{(d)} = 0$, and therefore
\begin{equation}\label{eqn:fDandA}
\mathbb{E} \frac{\partial^D f}{\partial x_{i_1} \ldots \partial x_{i_D}}(X) = \nu!l_1! \cdots l_\nu! a_{i_1 \ldots i_D} = \nu! |I_1|! \cdots |I_\nu|! a_{i_1 \ldots i_D},
\end{equation}
where $\mathcal{I} = \{I_1, \ldots, I_\nu \}$ is the partition given by the level sets of the index $\mathbf{i}$. It follows that for any partition $\mathcal{J} \in P_{qD}$,
$$\lVert A_D \rVert_\mathcal{J} \le \sum_{\mathcal{K} \in P_D} \lVert A_D \circ 1_{L(\mathcal{K})} \rVert_{\mathcal{J}} \le \sum_{\mathcal{K} \in P_D} \lVert \mathbb{E} f^{(D)}(X) \circ 1_{L(\mathcal{K})} \rVert_{\mathcal{J}} \lesssim \lVert \mathbb{E} f^{(D)}(X) \rVert_{\mathcal{J}},$$
using the partition of unity $1 = \sum_{\mathcal{K} \in P_D} 1_{L(\mathcal{K})}$ and the triangle inequality in the first, equation \eqref{eqn:fDandA} in the second and Lemma \ref{lemLK} in the last step.

The proof is now completed by induction. More precisely, in the next step will show that for any $d \in \{1,\ldots, D-1\}$ and any partitions $\mathcal{I} = \{I_1, \ldots, I_\mu \} \in P_d$, $\mathcal{J} = \{J_1, \ldots, J_\nu \} \in P_{qd}$,
\begin{equation}\label{tbp}
\lVert R^{(d)} \circ 1_{L(\mathcal{I})} \rVert_\mathcal{J} \lesssim \sum_{k=d+1}^D \sum_{\substack{\mathcal{K}\in P_{qk} \\ |\mathcal{K}|=|\mathcal{J}|}} \lVert A_k \rVert_\mathcal{K}.
\end{equation}
Having \eqref{tbp} at hand, it follows by reverse induction and Lemma \ref{lemLK} that
$$\sum_{d=1}^D \sum_{\mathcal{J} \in P_{qd}} p^{|\mathcal{J}|/2} \lVert A_d \rVert_\mathcal{J} \lesssim \sum_{d=1}^D \sum_{\mathcal{J} \in P_{qd}} p^{|\mathcal{J}|/2} \lVert \mathbb{E} f^{(d)}(X) \rVert_\mathcal{J}.$$
Plugging this into \eqref{eqn:LpNormEstimate} and applying Proposition \ref{normtoconc} finishes the proof.

\textbf{Step 4:} To show \eqref{tbp}, let us analyze the ``remainder tensors'' $R^{(d)}$ in more detail. To this end, fix $d \in \{1,\ldots,D-1\}$ and partitions $\mathcal{I} = \{I_1, \ldots, I_\nu \} \in P_d$, $\mathcal{J} = \{J_1, \ldots, J_\mu\} \in P_{qd}$, and let $\mathbf{l}$ be the vector with $l_\alpha \coloneqq |I_\alpha|$ (note that this implies $\abs{\mathbf{l}} = d$). For any $\mathbf{k} \in I_{\nu,\le D}$ with $\mathbf{k} > \mathbf{l}$, we define a $d$-tensor $S_\mathcal{I}^{(d,\mathbf{k})} = (s_\mathbf{i}^{(d,k_1, \ldots, k_\nu)})_{\mathbf{i} \in [n]^d} = (s_\mathbf{i}^{(d)})_{\mathbf{i} \in [n]^d}$ as follows: 
{\scriptsize
\[
s^{(d)}_\mathbf{i} = 
1_{\mathbf{i} \in L(\mathcal{I})}\sum_{m=\nu}^D \sum_{\substack{k_{\nu+1}, \ldots, k_m > 0 \\ k_1 + \ldots + k_m \le D}} \sum_{\substack{j_{\nu+1}, \ldots, j_m \\ (j_1, \ldots, j_m) \in [n]^{\underline{m}}}} \binom{m}{\nu} c_{(j_1,k_1), \ldots, (j_m,k_m)}^{(k_1+\ldots+k_m)} \prod_{\alpha=1}^\nu \mathbb{E} X_{j_\alpha}^{k_\alpha-l_\alpha} \prod_{\alpha=\nu+1}^m \mathbb{E}X_{j_\alpha}^{k_\alpha} 
\]
}
Here, we denote by $j_\alpha$ the value of $\mathbf{i}$ on the level set $I_\alpha$. Clearly,
$$R^{(d)} \circ 1_{L(\mathcal{I})} = \sum_{\substack{\mathbf{k} \in I_{\nu,\le D} \\ \mathbf{k} > \mathbf{l}}} \nu!\frac{k_1}{(k_1 - l_1)!} \cdots \frac{k_\nu}{(k_\nu - l_\nu)!} S_\mathcal{I}^{(d,\mathbf{k})}.$$
Therefore, it remains to prove that there is a partition $\mathcal{K} \in P_{q\abs{\mathbf{k}}}$ with $|\mathcal{K}| = |\mathcal{J}|$ such that
\begin{equation}\label{remtp}
\lVert S_\mathcal{I}^{(d,\mathbf{k})} \rVert_\mathcal{J} \lesssim \lVert A_{\abs{\mathbf{k}}} \rVert_{\mathcal{K}}.
\end{equation}

The tensor will be given by an appropriate embedding of the $d$-tensor $S^{(d,\mathbf{k})}_{\mathcal{I}}$. To this end, choose any partition $\tilde{\mathcal{I}} = \{\tilde{I}_1, \ldots, \tilde{I}_\nu \} \in P_{\abs{\mathbf{k}}}$ with $|\tilde{I}_\alpha| = k_\alpha$ and $I_\alpha \subset \tilde{I}_\alpha$ for all $\alpha$. Embedding the $d$-tensor $S^{(d,\mathbf{k})}_{\mathcal{I}}$ into the space of $\abs{\mathbf{k}}$-tensors is done by defining a new tensor $\tilde{S}^{\abs{\mathbf{k}}} = (\tilde{s}^{\abs{\mathbf{k}}}_\mathbf{i})_\mathbf{i}$ given by
\begin{equation}\label{newt}
\tilde{s}^{\abs{\mathbf{k}}}_\mathbf{i} = s_{\mathbf{i}_{[d]}}^{(d)}1_{\mathbf{i} \in L(\tilde{\mathcal{I}})}.
\end{equation}

We choose the partition $\mathcal{K} = \{K_1, \ldots, K_\mu \}$ defined in the following way: for any $j$, we have $J_j \subset K_j$, so that it remains to assign the elements $r \in \{qd+1, \ldots, q\abs{\mathbf{k}}\}$ to the sets $K_j$. Write $r = \eta q + m$ for some $\eta \in \{d, \ldots, \abs{\mathbf{k}} - 1\}$ and $m \in \{1, \ldots, q \}$. Since $\tilde{\mathcal{I}}$ is a partition of $\abs{\mathbf{k}}$, there is a unique $j \in \{1,\ldots, \nu\}$ such that $\eta + 1 \in \tilde{I}_j$. Take the smallest element $t$ in $\tilde{I}_j$ -- since $I_j \subset \tilde{I}_j$, we have $t \in [d]$ -- and add $r$ to the same set as $\pi(r) \coloneqq (t-1)q + m$.

We claim that
\begin{equation}\label{norm2}
\lVert S_\mathcal{I}^{(d,\abs{\mathbf{k}})} \rVert_\mathcal{J} \le \lVert \tilde{S}^{\abs{\mathbf{k}}} \rVert_\mathcal{K}.
\end{equation}
To see this, let $x^{(\beta)} = (x^{(\beta)}_{\mathbf{i}_{J_\beta}})$, $\beta = 1, \ldots, \mu$, be a collection of vectors satisfying $\lVert x^{(\beta)} \rVert_2 \le 1$. This gives rise to a further collection of unit vectors $y^{(\beta)} = (y^{(\beta)}_{\mathbf{i}_{K_\beta}})$, $\beta = 1, \ldots, \mu$, defined by
$$y^{(\beta)}_{\mathbf{i}_{K_\beta}} = x^{(\beta)}_{\mathbf{i}_{K_\beta \cap [qd]}} \prod_{r\in K_\beta\setminus[qd]} 1_{i_r = i_{\pi(r)}}$$
(recall the definition of $\pi(r)$ given in the paragraph above). Now, it follows that
\begin{align*}
\sum_{|\mathbf{i}_{[d]}| \le n} s_{\mathbf{i}_{[d]}}^{(d)}\prod_{\beta=1}^\mu x_{(e_q(\mathbf{i}))_{J_\beta}}^{(\beta)} = \sum_{|\mathbf{i}_{[\abs{\mathbf{k}}]}| \le n}\tilde{s}_{\mathbf{i}_{\abs{\mathbf{k}}}}^{\abs{\mathbf{k}}} \prod_{\beta=1}^\mu x_{(e_q(\mathbf{i}))_{J_\beta}}^{(\beta)} = \sum_{|\mathbf{i}_{[\abs{\mathbf{k}}]}| \le n}\tilde{s}_{\mathbf{i}_{[\abs{\mathbf{k}}]}}^{(\abs{\mathbf{k}})} \prod_{\beta=1}^\mu y_{(e_q(\mathbf{i}))_{K_\beta}}^{(\beta)}.
\end{align*}
These equations follow from the definition of the matrix $\tilde{S}^{\abs{\mathbf{k}}}$ and the fact that if $\mathbf{i} \in e_q(L(\tilde{\mathcal{I}}))$, then for $r > qd$, $i_r = i_{\pi(r)}$, which implies $y_{\mathbf{i}_{K_\beta}}^{(\beta)} = x_{\mathbf{i}_{K_\beta\cap[qd]}}^{(\beta)} = x_{\mathbf{i}_{J_\beta}}^{(\beta)}$. As this holds true for any collection $x^{(\beta)}$, we obtain \eqref{norm2}.

Finally, we prove
\begin{equation}\label{finalstep}
\lVert \tilde{S}^{\abs{\mathbf{k}}} \rVert_{\mathcal{K}} \lesssim \lVert A_{\abs{\mathbf{k}}} \rVert_{\mathcal{K}}
\end{equation}
for any partition $\mathcal{K} \in P_{q\abs{\mathbf{k}}}$. To see this, note that if $\mathbf{i} \in L(\tilde{\mathcal{I}})$, we have
$\tilde{s}_\mathbf{i}^{\abs{\mathbf{k}}} = a_\mathbf{i}^{\abs{\mathbf{k}}} \prod_{\alpha=1}^\nu \mathbb{E} X_{i_\alpha}^{k_\alpha - l_\alpha}$. As a consequence,
$$\tilde{S}^{\abs{\mathbf{k}}} = (A_{\abs{\mathbf{k}}} \circ 1_{L(\tilde{\mathcal{I}})}) \circ \otimes_{\alpha = 1}^{\abs{\mathbf{k}}} v_\alpha,$$
where the vectors $v_\alpha$ are defined by $v_\alpha = (\mathbb{E} X_i^{k_\alpha - l_\alpha})_{i \le n}$ if $\alpha \in \{\min I_1, \ldots, \min I_\nu \}$ and $v_\alpha = (1, \ldots, 1)$, otherwise. In particular, we always have $\lVert v_\alpha \rVert_\infty \lesssim 1$, and therefore, by Lemma \ref{Absch},
$$\lVert \tilde{S}^{\abs{\mathbf{k}}} \rVert_{\mathcal{K}} \lesssim \lVert A_{\abs{\mathbf{k}}} \circ 1_{L(\tilde{\mathcal{I}})} \rVert_{\mathcal{K}},$$
from where we easily arrive at \eqref{finalstep} by applying Lemma \ref{lemLK}.

Now, \eqref{remtp} follows by combining \eqref{norm2} and \eqref{finalstep}, which finishes the proof.
\end{proof}

\section{The general sub-exponential case: \texorpdfstring{$\alpha \in (0,1]$}{alpha}}\label{section:alpha}
Using slightly different techniques than in the proofs of Theorem \ref{thmch} and Theorem \ref{thmpol}, we may obtain concentration results for polynomials in independent random variables with bounded $\psi_{\alpha}$-norms for any $\alpha \in (0,1]$. Here, the key difference is that we will not compare their moments to products of Gaussians but to Weibull variables.

To this end, we need some more notation. Let $A = (a_\mathbf{i})_{\mathbf{i} \in [n]^d}$ be a $d$-tensor and $I \subset [d]$ a set of indices. Then, for any $\mathbf{i}_I \coloneqq (i_j)_{j \in I}$, we denote by $A_{\mathbf{i}_{I^c}} = (a_\mathbf{i})_{\mathbf{i}_{I^c}}$ the $(d-\abs{I})$-tensor defined by fixing $i_j$, $j \in I$. For instance, if $d = 4$, $I = \{1,3\}$ and $i_1 = 1, i_3 = 2$, then $A_{\mathbf{i}_{I^c}} = (a_{1j2k})_{jk}$. We will also need the notation $P(I^c)$ for the set of all partitions of $I^c$.

For $I = [d]$, i.\,e. we fix all indices of $\mathbf{i}$, we interpret $A_{\mathbf{i}_{I^c}} = a_\mathbf{i}$ as the $\mathbf{i}$-th entry of $A$. Moreover, in this case, we assume that there is a single element $\mathcal{J} \in P(I^c)$ (which we may call the ``empty'' partition), and $\lVert A_{\mathbf{i}_{I^c}} \rVert_{\mathcal{J}} = \abs{a_\mathbf{i}}$ is just the Euclidean norm of $a_\mathbf{i}$. Finally, note that if $I = \emptyset$, $\mathbf{i}_I$ does not indicate any specification, and $A_{\mathbf{i}_{I^c}} = A$.

Using the characterization of the $\Psi_\alpha$ norms in terms of the growth of $L^p$ norms (see Appendix \ref{section:OrliczNorms} for details), \cite[Corollary 2]{KL15} now yields a result similar to Theorem \ref{thmch} for all $\alpha \in (0,1]$:

\begin{korollar}\label{corollary:PsiAlphaKL15}
Let $X_1, \ldots, X_n$ be a set of independent, centered random variables with $\norm{X}_{\Psi_{\alpha}} \le M$ for some $\alpha \in (0,1]$ , $A$ be a symmetric $d$-tensor with vanishing diagonal and consider $f_{d,A}$ as in \eqref{eqn:chaos}. We have for any $t > 0$
\[
	\mathbb{P}\Big( \abs{f_{d,A}(X)} \ge t \Big) \le 2 \exp \left( - \frac{1}{C_{d,\alpha}} \min_{I \subset [d]} \min_{\mathcal{J} \in P(I^c)} \Big( \frac{t}{M^d \max_{\mathbf{i}_I} \norm{A_{\mathbf{i}_{I^c}}}_{\mathcal{J}}} \Big)^{\frac{2\alpha}{2\abs{I}+\alpha\abs{\mathcal{J}}}} \right).
\]
\end{korollar}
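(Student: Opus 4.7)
The plan is to reduce the bound to a two-sided $L^p$ moment estimate from \cite{KL15} and then convert it to a tail bound via Proposition \ref{normtoconc}, in exactly the same spirit as the proof of Theorem \ref{thmch}, but with a different comparison distribution.

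First, I would normalize to $M = 1$ by rescaling $Y_i := X_i/M$, absorbing the resulting factor $M^d$ into the coefficients of $A$ (as in Step~1 of the proof of Theorem \ref{thmpol}). By the equivalence between $\psi_\alpha$-norms and $L^p$-growth recorded in Appendix \ref{section:OrliczNorms}, the bound $\lVert X_i \rVert_{\psi_\alpha} \leq 1$ is equivalent to $\lVert X_i \rVert_p \leq C_\alpha p^{1/\alpha}$ for all $p \geq 1$. For $\alpha \in (0,1]$, this places the $X_i$ in the class of random variables with log-convex tails (since $t \mapsto t^\alpha$ is concave, the tail function $t \mapsto \exp(-t^\alpha)$ is log-convex) treated by Kolesnikov--Latała.

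Second, I would invoke \cite[Corollary 2]{KL15}, preceded by the standard symmetrization and, if needed, a Kwapie\'n-type decoupling step in order to match their framework of decoupled chaos. The result is an $L^p$ estimate of the form
\begin{equation*}
\lVert f_{d,A}(X) \rVert_p \leq C_{d,\alpha} \sum_{I \subset [d]} \sum_{\mathcal{J} \in P(I^c)} p^{\frac{|I|}{\alpha} + \frac{|\mathcal{J}|}{2}} \max_{\mathbf{i}_I} \lVert A_{\mathbf{i}_{I^c}} \rVert_{\mathcal{J}}.
\end{equation*}
Heuristically, the heavier-than-Gaussian moment growth $p^{1/\alpha}$ forces the extremal $p$-th moment of the chaos to be attained on ``diagonal'' blocks of $|I|$ coordinates (each contributing a factor $p^{1/\alpha}$), while the remaining $d - |I|$ indices contribute in a Gaussian/sub-Gaussian fashion via the Lata\l a-style partition norms on the slices $A_{\mathbf{i}_{I^c}}$. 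Converting this bound to a tail estimate is then a direct application of Proposition \ref{normtoconc}, in the mild extension where the summation index ranges over pairs $(I, \mathcal{J})$ rather than $k \in \{1,\ldots, d\}$: for each fixed $(I, \mathcal{J})$ the corresponding summand dominates when $p \sim (t / \max_{\mathbf{i}_I} \lVert A_{\mathbf{i}_{I^c}} \rVert_\mathcal{J})^{1/(|I|/\alpha + |\mathcal{J}|/2)}$, and the identity $1/(|I|/\alpha + |\mathcal{J}|/2) = 2\alpha/(2|I| + \alpha|\mathcal{J}|)$ produces precisely the stated exponent.

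The main obstacle I anticipate is the clean translation between frameworks: verifying that the precise norms appearing on the right-hand side of \cite[Corollary 2]{KL15} coincide, up to constants depending only on $d$ and $\alpha$, with the partition norms $\lVert \cdot \rVert_\mathcal{J}$ defined in \eqref{normgen} on the slices $A_{\mathbf{i}_{I^c}}$, and that the profile $t \mapsto t^\alpha$ is indeed an admissible tail function for their corollary. Aside from this translation, the argument is bookkeeping over the finitely many pairs $(I, \mathcal{J})$, plus checking that Proposition \ref{normtoconc} accommodates this extended index set (which it does, since the number of summands depends only on $d$).
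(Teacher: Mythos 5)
Your proposal matches the paper's route exactly: the corollary is obtained there by combining the $L^p$-growth characterization of the $\psi_\alpha$-norm (Lemma \ref{lemma:PsiAlphaAndGrowth}) with the two-sided moment estimates of \cite[Corollary 2]{KL15} and converting moments to tails via Proposition \ref{normtoconc}, which is precisely your plan including the exponent bookkeeping $2/(2\abs{I}/\alpha+\abs{\mathcal{J}}) = 2\alpha/(2\abs{I}+\alpha\abs{\mathcal{J}})$. One small caveat: a bound on $\lVert X_i\rVert_{\psi_\alpha}$ does not literally place the $X_i$ in the log-convex-tail class (the paper itself notes these classes are incomparable); the link to \cite{KL15} runs through moment domination by symmetric Weibull variables (cf.\ Lemmas \ref{comp2} and \ref{wb}), but this does not change your argument.
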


The main goal of this section is to generalize Corollary \ref{corollary:PsiAlphaKL15} to arbitrary polynomials similarly to Theorem \ref{thmpol}. This yields the following result:

\begin{satz}\label{thmpolgen}
Let $X_1, \ldots, X_n$ be a set of independent random variables satisfying $\lVert X_i \rVert_{\psi_{\alpha}}\linebreak[3] \le M$ for some $\alpha \in (0,1]$ and $M > 0$. Let $f \colon \mathbb{R}^n \to \mathbb{R}$ a polynomial of total degree $D \in \mathbb{N}$. Then, for any $t > 0$,
{\scriptsize
\begin{align*}
\mathbb{P} (|f(X) - \mathbb{E}f(X)| \ge t) \le 2 \exp\Big(- \frac{1}{C_{D,\alpha}} \min_{1 \le d \le D}\min_{I\subset[d]}\min_{\mathcal{J} \in P(I^c)} \Big(\frac{t}{M^d \max_{\mathbf{i}_I} \lVert (\mathbb{E} f^{(d)}(X))_{\mathbf{i}_{I^c}} \rVert_\mathcal{J}} \Big)^{\frac{2\alpha}{2\abs{I}+\alpha\abs{\mathcal{J}}}}\Big).
\end{align*}
}
\end{satz}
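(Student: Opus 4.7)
The plan is to mirror the four-step structure of the proof of Theorem \ref{thmpol}, swapping Gaussian tools for their $\psi_\alpha$/Weibull counterparts. First I would perform the Hoeffding-type decomposition from Step 1 of that earlier proof: after rescaling to $M=1$ and sorting monomials by total degree, the problem reduces to bounding the $L^p$-norms of the generalized chaoses
\[
T_{d,\nu,\mathbf{k}}(X) \coloneqq \sum_{\mathbf{i} \in [n]^{\underline{\nu}}} a_\mathbf{i}^\mathbf{k} \prod_{j=1}^{\nu} (X_{i_j}^{k_j} - \mathbb{E} X_{i_j}^{k_j})
\]
with $d \in [D]$, $\nu \in [d]$ and $\mathbf{k} \in I_{\nu,d}$, the coefficients $a_\mathbf{i}^\mathbf{k}$ being as in the proof of Theorem \ref{thmpol}. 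Since each centered factor satisfies $\lVert X_{i_j}^{k_j}-\mathbb{E} X_{i_j}^{k_j}\rVert_{\psi_{\alpha/k_j}} \lesssim 1$, the next step is to decouple and symmetrize and then iteratively replace $X_{i_j}^{k_j}$ by a product of $k_j$ independent symmetric Weibull variables of shape $\alpha$ (the $\psi_\alpha$-analogue of Lemma \ref{comp}). This reshapes each $T_{d,\nu,\mathbf{k}}$ into a Weibull chaos of order $|\mathbf{k}|=d$ with coefficient tensor $A_d \circ 1_{L(\mathcal{K}(\mathbf{k}))}$, defined exactly as in the proof of Theorem \ref{thmpol} but without any $e_q$-embedding.

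Next I would apply the $L^p$-version of Corollary \ref{corollary:PsiAlphaKL15}, i.e.\ the moment inequality of \cite[Corollary 2]{KL15}, to each of these Weibull chaoses. Combined with Lemma \ref{lemLK} to absorb the indicator $1_{L(\mathcal{K}(\mathbf{k}))}$, this yields an estimate of the form
\[
\lVert f(X)-\mathbb{E} f(X)\rVert_p \lesssim \sum_{d=1}^D \sum_{I\subset[d]} \sum_{\mathcal{J}\in P(I^c)} p^{|I|/\alpha + |\mathcal{J}|/2}\, \lVert A_d\rVert_{I,\mathcal{J}},
\]
where $\lVert A\rVert_{I,\mathcal{J}} \coloneqq \max_{\mathbf{i}_I} \lVert A_{\mathbf{i}_{I^c}}\rVert_\mathcal{J}$. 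The remaining task is to replace $\lVert A_d\rVert_{I,\mathcal{J}}$ by $\lVert \mathbb{E} f^{(d)}(X)\rVert_{I,\mathcal{J}}$ up to constants, done by reverse induction on $d$ following Step 3 of the proof of Theorem \ref{thmpol}; the required extensions of Lemmas \ref{uncondlemma}, \ref{lemLK} and \ref{Absch} to the sliced norms $\lVert\cdot\rVert_{I,\mathcal{J}}$ are automatic, since slicing (the operation $A \mapsto A_{\mathbf{i}_{I^c}}$) commutes with Hadamard multiplication against indicators of generalized rows and diagonals, and pulling out a uniformly bounded vector from an outer product only costs a constant. A straightforward variant of Proposition \ref{normtoconc} allowing moment growth of the general form $\sum (C_{d,I,\mathcal{J}}\, p)^{|I|/\alpha + |\mathcal{J}|/2}$ then converts the $L^p$-estimate into the tail bound claimed in Theorem \ref{thmpolgen}.

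The hardest part will be the Weibull comparison in Step 2. Unlike in the proof of Theorem \ref{thmpol}, where all $\nu$ factors of a given $T_{d,\nu,\mathbf{k}}$ carry the same $\psi_{2/q}$-parameter and can be inflated uniformly by a single invocation of Lemma \ref{comp}, here the exponents $k_j$ differ across the $\nu$ factors, so the inflation must be carried out coordinate-by-coordinate, aligning each $\psi_{\alpha/k_j}$-factor with a block of $k_j$ Weibulls of shape $\alpha$. Making this inflation compatible with the generalized-diagonal indicator $1_{L(\mathcal{K}(\mathbf{k}))}$, so that the subsequent KL15 moment bound actually delivers the norms $\lVert A_d\rVert_{I,\mathcal{J}}$ of the original $d$-tensor rather than of some inflated object, is the main technical point; once this is handled, the structural arguments of Steps 3 and 4 go through mechanically.
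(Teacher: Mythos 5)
Your proposal follows essentially the same route as the paper's proof: the same Hoeffding-type reduction to generalized chaoses, the same coordinate-by-coordinate replacement of each $X_{i_j}^{k_j}$ by a block of $k_j$ symmetric Weibull variables (the paper's Lemma \ref{comp2}), the Kwapie\'n--Lata{\l}a moment bound (Lemma \ref{wb}), the sliced-norm analogues of the Hadamard-product lemmas (Lemma \ref{generalizations}), and the same reverse induction replacing $A_d$ by $\mathbb{E}f^{(d)}(X)$ followed by Proposition \ref{normtoconc}. The one place you misjudge the difficulty is Step 4 rather than the Weibull inflation (which already occurs with differing exponents $k_j$ in the proof of Theorem \ref{thmpol}): there, because some indices are frozen by $I$, the embedded partition $\mathcal{K}$ acquires an extra block $K_{\mu+1}$, so one only gets $\lvert\mathcal{K}\rvert\in\{\lvert\mathcal{J}\rvert,\lvert\mathcal{J}\rvert+1\}$ and must check compatibility with the exponents via $p^{\lvert\mathcal{J}\rvert/2}\le p^{\lvert\mathcal{K}\rvert/2}$ for $p\ge 2$ -- but this is bookkeeping, not a gap in your plan.
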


To prove Theorem \ref{thmpolgen}, note that one particular example of centered random variables with $\norm{X}_{\Psi_{\alpha}} \le M$ is given by symmetric Weibull variables with shape parameter $\alpha$ (and scale parameter $1$), i.\,e. symmetric random variables $w$ with $\mathbb{P}(\abs{w}\ge t) = \exp(-t^\alpha)$. In fact, \cite[Example 3]{KL15} especially implies the following analogue of of Lemma \ref{Lat}:
\begin{lemma}\label{wb}
Let $A = (a_\mathbf{i})_{\mathbf{i} \in [n]^d}$ be a $d$-tensor and $(w_i^j)$, $i \le n$, $j \le d$, an array of i.i.d. Weibull variables with shape parameter $\alpha \in (0,1]$. Then, for every $p \ge 2$,
\begin{align*}
\begin{split}
&C_{\alpha,d}^{-1} \sum_{I \subset [d]} \sum_{\mathcal{J} \in P(I^c)} p^{\abs{I}/\alpha+|\mathcal{J}|/2} \max_{\mathbf{i}_I} \lVert A_{\mathbf{i}_{I^c}} \rVert_{\mathcal{J}}\\ \le \ &\lVert \langle A, w^1 \otimes \ldots \otimes w^d \rangle \rVert_p \le C_{\alpha,d} \sum_{I \subset [d]} \sum_{\mathcal{J} \in P(I^c)} p^{\abs{I}/\alpha+|\mathcal{J}|/2} \max_{\mathbf{i}_I} \lVert A_{\mathbf{i}_{I^c}} \rVert_{\mathcal{J}}.
\end{split}
\end{align*}
\end{lemma}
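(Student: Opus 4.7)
The plan is to parallel the proof of Theorem~\ref{thmpol}, replacing the Gaussian-product comparison by a Weibull comparison and using Lemma~\ref{wb} in place of Lemma~\ref{Lat}. The target is the $L^p$-moment bound
\begin{equation*}
\|f(X) - \mathbb{E}f(X)\|_p \le C_{D,\alpha}\sum_{d=1}^{D} M^d \sum_{I\subset [d]}\sum_{\mathcal{J}\in P(I^c)} p^{|I|/\alpha + |\mathcal{J}|/2}\max_{\mathbf{i}_I}\|(\mathbb{E}f^{(d)}(X))_{\mathbf{i}_{I^c}}\|_{\mathcal{J}}
\end{equation*}
valid for every $p \ge 2$. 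Since $1/(|I|/\alpha + |\mathcal{J}|/2) = 2\alpha/(2|I|+\alpha|\mathcal{J}|)$, a straightforward generalization of Proposition~\ref{normtoconc} allowing mixed exponents $p^{|I|/\alpha+|\mathcal{J}|/2}$ then delivers exactly the tail bound stated in Theorem~\ref{thmpolgen}.

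To derive this $L^p$-bound I would first reduce to $M=1$ by rescaling, then repeat Step~1 of the proof of Theorem~\ref{thmpol} verbatim to obtain
\begin{equation*}
|f(X) - \mathbb{E}f(X)| \le \sum_{d=1}^{D}\sum_{\nu=1}^{d}\sum_{\mathbf{k}\in I_{\nu,d}}\Big|\sum_{\mathbf{i}\in [n]^{\underline{\nu}}} a_{\mathbf{i}}^{\mathbf{k}} (X_{i_1}^{k_1} - \mathbb{E}X_{i_1}^{k_1})\cdots (X_{i_\nu}^{k_\nu} - \mathbb{E}X_{i_\nu}^{k_\nu})\Big|,
\end{equation*}
and then apply decoupling and symmetrization. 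Lemma~\ref{comp} must be replaced by the Weibull analogue: for symmetric $Y_i$ with $\|Y_i\|_{\psi_\alpha}\le 1$ and $\alpha\in(0,1]$, $\|\sum_i a_i Y_i\|_p \le C_\alpha \|\sum_i a_i w_i\|_p$ with $w_i$ i.i.d.\ symmetric Weibulls of shape $\alpha$, a comparison that follows from the $L^p$-characterization $\|Y\|_{\psi_\alpha}\asymp \sup_{p\ge 1} p^{-1/\alpha}\|Y\|_p$ and the explicit Weibull moments, and is essentially contained in \cite{KL15}. Iterating this on each factor (noting that $\|X_{i_j}^{k_j}-\mathbb{E}X_{i_j}^{k_j}\|_{\psi_{\alpha/k_j}}$ is controlled, with moments matched by a product of $k_j$ independent Weibulls of shape $\alpha$) and recognising the resulting sum as a $d$th order Weibull chaos $\langle e(A_d),\bigotimes_{j=1}^d (w_i^j)_{i\le n}\rangle$ against an index-repeating embedding $e$ of the coefficient tensor $A_d$, Lemma~\ref{wb} then yields
\begin{equation*}
\|f(X) - \mathbb{E}f(X)\|_p \lesssim \sum_{d=1}^{D}\sum_{I\subset [d]}\sum_{\mathcal{J}\in P(I^c)} p^{|I|/\alpha + |\mathcal{J}|/2}\max_{\mathbf{i}_I}\|(A_d)_{\mathbf{i}_{I^c}}\|_{\mathcal{J}}.
\end{equation*}

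The remaining task is to replace $A_d$ by $\mathbb{E}f^{(d)}(X)$ via the reverse-induction argument of Steps~3 and~4 of the proof of Theorem~\ref{thmpol}. The top-degree identity $\mathbb{E}f^{(D)}(X)_{\mathbf{i}} = \nu!|I_1|!\cdots|I_\nu|!\, a_{\mathbf{i}}^{(D)}$ still holds, and for $d<D$ the remainder tensor $R^{(d)}\circ 1_{L(\mathcal{I})}$ decomposes into embedded lower-order tensors $S_{\mathcal{I}}^{(d,\mathbf{k})}$ whose norms are controlled by those of $A_k$ through Lemmas~\ref{lemLK} and~\ref{Absch}. The auxiliary lemmas (Remark~\ref{monotone} and Lemmas~\ref{uncondlemma}, \ref{lemLK}, \ref{Absch}) are stated for arbitrary partitions and hence apply equally after fixing the indices $\mathbf{i}_I$ in the sub-tensor $A_{\mathbf{i}_{I^c}}$, so the induction runs unchanged except for carrying the extra quantifier $\max_{\mathbf{i}_I}$ through each step.

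The main obstacle is the $\psi_\alpha$-to-Weibull comparison lemma for $\alpha\in(0,1]$: unlike Lemma~\ref{comp}, it cannot be obtained from a Gaussian hypercontractive argument, since there is no natural integer $q$ to pivot on, and must be established either by direct moment-matching or extracted from \cite{KL15}. A secondary, more mechanical difficulty is the bookkeeping of the prefix $\max_{\mathbf{i}_I}$ throughout Step~4: the embedding $\tilde S^{|\mathbf{k}|}$ of the remainder tensor must be chosen so that fixing the indices in $I$ commutes with the embedding and with Lemma~\ref{Absch}, which requires care but no new ideas.
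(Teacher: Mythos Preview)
Your proposal does not address the stated lemma at all. Lemma~\ref{wb} is the two-sided $L^p$ estimate for a \emph{decoupled Weibull chaos} $\langle A, w^1\otimes\cdots\otimes w^d\rangle$; in the paper it is not proved but simply quoted from \cite[Example~3]{KL15} (which specializes \cite[Corollary~2]{KL15} to symmetric Weibull variables). What you have written is a sketch of Theorem~\ref{thmpolgen}, in which Lemma~\ref{wb} is used as a black box. So as a proof of the statement actually asked for, the proposal is off target.

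Read as a sketch of Theorem~\ref{thmpolgen}, your outline is close to the paper's proof but underestimates two points where the argument genuinely differs from Theorem~\ref{thmpol}. First, the claim that Lemmas~\ref{uncondlemma}, \ref{lemLK}, \ref{Absch} ``apply equally after fixing the indices $\mathbf{i}_I$'' is not automatic: restricting a generalized diagonal $\{i_k=i_l:\,k,l\in K\}$ to a sub-tensor with some indices in $K\cap I$ frozen turns it into a generalized \emph{row}, not a diagonal, so the lemmas must be re-proved for restricted tensors (this is exactly what Lemma~\ref{generalizations} does in the paper). Second, the induction in Step~4 does \emph{not} run unchanged: when an element $r\in\{d+1,\ldots,|\mathbf{k}|\}$ of the enlarged index set is linked by $\tilde{\mathcal{I}}$ to a block whose smallest element lies in $I$ (i.e.\ among the frozen coordinates), there is no $K_j$ to receive it, and the paper must create an extra set $K_{\mu+1}$. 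Hence the controlling partition $\mathcal{K}$ satisfies $|\mathcal{K}|\in\{|\mathcal{J}|,|\mathcal{J}|+1\}$ rather than $|\mathcal{K}|=|\mathcal{J}|$, and the right-hand side of the remainder estimate \eqref{tbpg} involves $|\mathcal{K}|\ge|\mathcal{J}|$; this is harmless for the final bound because $p\ge 2$, but it is a structural change you would have to carry out, not merely bookkeeping.
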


Moreover, we need a replacement of Lemma \ref{comp}. Here, instead of Gaussian random variables we use Weibull random variables to compare the $p$-th moments:

\begin{lemma}\label{comp2}
For any $k \in \mathbb{N}$, any $\alpha \in (0,1]$ and any $p \ge 2$, if $Y_1, \ldots, Y_n$ are independent symmetric random variables with $\lVert Y_i \rVert_{\psi_{\alpha/k}} \le M$, then
$$\Big\lVert \sum_{i = 1}^n a_iY_i \Big\rVert_p \le C_{\alpha,k} M \Big\lVert \sum_{i = 1}^n a_i w_{i_1} \cdots w_{ik} \Big\rVert_p,$$
where $w_{ij}$ are i.i.d. Weibull variables with shape parameter $\alpha$.
\end{lemma}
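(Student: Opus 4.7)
The plan is to mimic the proof of Lemma \ref{comp} (the Gaussian case, \cite{AW15}, Lemma 5.4), with products of independent standard Gaussians replaced by products of $k$ independent symmetric Weibull variables of shape parameter $\alpha$. The strategy rests on two ingredients: a pointwise comparison of $L^q$ norms of $Y_i$ with those of $w_{i1}\cdots w_{ik}$, followed by a multinomial-expansion argument that transfers the comparison to the level of sums, exploiting the symmetry of both sides.

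First, I would establish that for each $i$ and every $q \ge 2$,
\[
\|Y_i\|_q \le C_{\alpha,k}\, M\, \|w_{i1}\cdots w_{ik}\|_q.
\]
The upper bound on $\|Y_i\|_q$ follows from the standard $L^q$-characterization of Orlicz norms (see Appendix \ref{section:OrliczNorms}): the hypothesis $\|Y_i\|_{\psi_{\alpha/k}} \le M$ gives $\|Y_i\|_q \le C_{\alpha,k}\, M\, q^{k/\alpha}$. For the lower bound on the right-hand side, note that for a symmetric Weibull variable $w$ of shape $\alpha$ one has $\mathbb{E}|w|^q = \Gamma(q/\alpha+1)$, so Stirling yields $\|w\|_q \ge c_\alpha\, q^{1/\alpha}$ for $q \ge 1$. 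Independence of the $w_{ij}$ across $j$ then gives $\|w_{i1}\cdots w_{ik}\|_q = \prod_{j=1}^k \|w_{ij}\|_q \ge c_{\alpha,k}\, q^{k/\alpha}$, from which the pointwise comparison follows.

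For even integer exponents $p = 2r$, I set $S = \sum_i a_i Y_i$ and expand multinomially; using independence and symmetry (odd moments vanish, so only compositions $\mathbf{q} = (q_1,\ldots,q_n)$ with all entries even and $|\mathbf{q}|=2r$ survive),
\[
\mathbb{E} S^{2r} = \sum_{|\mathbf{q}|=r} \binom{2r}{2q_1,\ldots,2q_n}\prod_{i=1}^n a_i^{2q_i}\,\mathbb{E}Y_i^{2q_i}.
\]
Applying the pointwise comparison term-by-term and factoring out $(C_{\alpha,k}M)^{2r}$ (since $\sum_i q_i = r$), then reversing the expansion on the Weibull side (the summands $a_i\, w_{i1}\cdots w_{ik}$ are again independent and symmetric across $i$) yields
\[
\mathbb{E}S^{2r} \le (C_{\alpha,k}M)^{2r}\,\mathbb{E}\Big(\sum_i a_i w_{i1}\cdots w_{ik}\Big)^{2r},
\]
proving the lemma at $p = 2r$. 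For general $p \ge 2$, I would set $r = \lceil p/2\rceil$, use monotonicity $\|S\|_p \le \|S\|_{2r}$, and then absorb $\|\sum_i a_i w_{i1}\cdots w_{ik}\|_{2r}$ into $\|\sum_i a_i w_{i1}\cdots w_{ik}\|_p$ up to a constant, via a uniform bound on the ratio of consecutive $L^q$-norms for sums of symmetric $\psi_{\alpha/k}$-variables. The only delicate step is this last extension from integer to real $p$ — the multinomial calculation itself is routine once one carefully tracks the vanishing of odd moments — and I expect it to be the main technical obstacle in a fully rigorous write-up.
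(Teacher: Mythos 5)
Your proposal is correct and follows essentially the same route as the paper: a pointwise comparison $\lVert Y_i\rVert_q \le C_{\alpha,k}M\lVert w_{i1}\cdots w_{ik}\rVert_q$ obtained from the $L^q$-characterization of the $\psi_{\alpha/k}$-norm together with the lower bound $\lVert w_{i1}\cdots w_{ik}\rVert_q \ge c_{\alpha,k}\,q^{k/\alpha}$, transferred to the sums via the moment comparison for independent symmetric summands (the paper invokes this as a ``standard symmetrization inequality'' where you expand the multinomial explicitly). The paper likewise reduces to $p \in 2\mathbb{N}$ without further comment; the absorption step you flag as delicate is exactly what is implicitly needed there and can be supplied by the two-sided estimate of Lemma \ref{wb} applied to the diagonal $k$-tensor, so there is no genuine gap.
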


\begin{proof}
We extend the arguments given in the proof of \cite[Corollary 2]{KL15}. As always, we assume $M=1$. Moreover, note that it suffices to prove Lemma \ref{comp2} for $p \in 2\mathbb{N}$. It follows from Lemma \ref{wb} that $\norm{w_{ij}}_p \ge C_\alpha p^{1/\alpha}$ for any $i,j$, from where we easily arrive at $\norm{w_{i1} \cdots w_{ik}}_p \ge C_{\alpha,k} p^{k/\alpha}$. Consequently, for a set of independent Rademacher variables $\varepsilon_1, \ldots, \varepsilon_n$ which are independent of the $(Y_i)_i$, $\norm{Y_i}_p = \norm{\varepsilon_i Y_i}_p \le C_\alpha p^{k/\alpha} \le C_{\alpha,k} \norm{w_{i1} \ldots w_{ik}}_p$. Therefore, for any $m \in \mathbb{N}$ and using standard symmetrization inequalities,
$$\Big\lVert \sum_{i = 1}^n a_iY_i \Big\rVert_{2m} \le 2 \Big\lVert \sum_{i = 1}^n a_i\varepsilon_i Y_i \Big\rVert_{2m} \le C_{\alpha,k} \Big\lVert \sum_{i = 1}^n a_i w_{i_1} \cdots w_{ik} \Big\rVert_{2m}.$$
\end{proof}

Our next goal is to adapt Lemmas \ref{uncondlemma}, \ref{lemLK} and \ref{Absch} to the ``restricted'' tensors $A_{\mathbf{i}_{I^c}}$. That is, we examine whether (a modification of) the inequality
\begin{equation}\label{uncondrestr}
\lVert (A \circ 1_C)_{\mathbf{i}_{I^c}} \rVert_{\mathcal{J}} \le \lVert A_{\mathbf{i}_{I_c}} \rVert_{\mathcal{J}}
\end{equation}
still holds in this situation, where $\mathcal{J}$ is a partition of $I^c$.

\begin{lemma}\label{generalizations}
Let $A = (a_{\textbf{i}})_{\mathbf{i} \in [n]^d}$ be a $d$-tensor, $I \subset [d]$ and $\mathbf{i}_I \in [n]^I$ fixed.
	\begin{enumerate}
		\item If $C = \{\mathbf{i} \colon i_{k_1} = j_1, \ldots, i_{k_l} = j_l \}$ for some $1 \le k_1 < \ldots < k_l \le d$ (``generalized row''), then \eqref{uncondrestr} holds.
		\item If $C = \{\mathbf{i} \colon i_k = i_l \ \forall k, l \in K \}$ for some $K \subset [d]$ (``generalized diagonal''), then \eqref{uncondrestr} holds.
		\item If $C_1, C_2 \subset [n]^d$ are such that \eqref{uncondrestr} holds, then so is $C_1 \cap C_2$.
		\item If $\mathcal{K} \in P_d$, then
		$\lVert (A \circ 1_{L(\mathcal{K})})_{\mathbf{i}_{I^c}} \rVert_{\mathcal{J}} \le 2^{|\mathcal{K}|(|\mathcal{K}|-1)/2}\lVert A_{\mathbf{i}_{I^c}} \rVert_{\mathcal{J}}$.
		\item For any vectors $v_1, \ldots, v_d \in \mathbb{R}^n$,
		$\lVert (A \circ \otimes_{i=1}^d v_i)_{\mathbf{i}_{I^c}} \rVert_\mathcal{J} \le \lVert A_{\mathbf{i}_{I^c}} \rVert_\mathcal{J} \prod_{i=1}^d \lVert v_i \rVert_\infty$.
	\end{enumerate}
\end{lemma}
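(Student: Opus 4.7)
The overall strategy is to reduce every assertion to the corresponding unrestricted lemma (Lemma \ref{uncondlemma}, Lemma \ref{lemLK}, or Lemma \ref{Absch}) applied to the $(d-\abs{I})$-tensor $A_{\mathbf{i}_{I^c}}$. The key observation is that slicing commutes with Hadamard products in the sense that for any $C \subseteq [n]^d$,
\[
(A \circ 1_C)_{\mathbf{i}_{I^c}} = A_{\mathbf{i}_{I^c}} \circ 1_{C(\mathbf{i}_I)}, \qquad C(\mathbf{i}_I) \coloneqq \{\mathbf{i}_{I^c} \in [n]^{I^c} : (\mathbf{i}_I, \mathbf{i}_{I^c}) \in C\}.
\]
So it is enough to check that for each of the sets $C$ appearing in (1)--(5), the slice $C(\mathbf{i}_I)$ is either empty (in which case the inequality is trivial) or a set of the same type inside $[n]^{I^c}$, to which the corresponding original lemma applies.

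For (1), I would split $\{k_1,\ldots,k_l\}$ into those indices lying in $I$ and those in $I^c$. If the values prescribed on $I$ disagree with $\mathbf{i}_I$, then $C(\mathbf{i}_I) = \emptyset$; otherwise $C(\mathbf{i}_I)$ is a generalized row in $[n]^{I^c}$ and Lemma \ref{uncondlemma}(1) applies. (3) is just the obvious inheritance of intersection-stability. Part (2) is the subtle case: if $\abs{K \cap I} \ge 2$ and the $\mathbf{i}_I$-values on $K \cap I$ disagree, the slice is empty; if $K \cap I = \emptyset$, the slice is a generalized diagonal of $[n]^{I^c}$; and in the remaining case, where $K \cap I$ is nonempty with common prescribed value $j$, the slice is the intersection of a diagonal on $K \cap I^c$ with the generalized row $\{i_k = j : k \in K \cap I^c\}$, which is handled by combining (1) and (3) with Lemma \ref{uncondlemma}(2).

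For (4), I would mimic the proof of Lemma \ref{lemLK}: write $1_{L(\mathcal{K})}$ as a product of indicators of $\abs{\mathcal{K}}$ generalized diagonals and $\abs{\mathcal{K}}(\abs{\mathcal{K}}-1)/2$ sets of the form $\{i_k \neq i_l\}$, invoke parts (1)--(3) of the present lemma in place of Lemma \ref{uncondlemma}, and accumulate one factor of $2$ for each complement (coming from $\lVert B - B\circ 1_{\{i_k = i_l\}} \rVert \le 2 \lVert B \rVert$). For (5), I would use the factorization
\[
(A \circ \otimes_{i=1}^d v_i)_{\mathbf{i}_{I^c}} = \Big(\prod_{i \in I} (v_i)_{(\mathbf{i}_I)_i}\Big) \cdot \Big(A_{\mathbf{i}_{I^c}} \circ \bigotimes_{i \in I^c} v_i\Big),
\]
bound the scalar prefactor by $\prod_{i \in I}\lVert v_i \rVert_\infty$, and apply Lemma \ref{Absch} to the $(d-\abs{I})$-tensor in brackets to pick up the remaining factor $\prod_{i \in I^c}\lVert v_i \rVert_\infty$.

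The main technical nuisance is bookkeeping in (2), namely verifying that restricting a generalized diagonal along indices in $I$ leaves behind an object still of a type covered by the original lemma; once this case analysis is carried out, (4) and (5) are nearly mechanical transcriptions of their unrestricted counterparts with the restricted tensor $A_{\mathbf{i}_{I^c}}$ in the role of $A$.
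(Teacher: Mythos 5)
Your proposal is correct and follows essentially the same route as the paper: reduce each claim to the unrestricted Lemmas \ref{uncondlemma}, \ref{lemLK} and \ref{Absch} applied to the sliced tensor $A_{\mathbf{i}_{I^c}}$, with a case analysis on how $C$ meets $I$ (in particular, the observation that a generalized diagonal meeting $I$ turns into a generalized row, or an empty set, after slicing). Your explicit factorization in (5) is a slightly cleaner packaging of what the paper calls an ``easy modification'' of Lemma \ref{Absch}, but it is not a different argument.
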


\begin{proof}
To see (1), we may assume that $\{k_1, \ldots, k_l\} \cap I = \emptyset$ (note that if $\{k_1, \ldots, k_l\} \cap I \neq \emptyset$, either the conditions are not compatible, in which case $(A \circ 1_C)_{i_{I^c}} = 0$, or we can remove some of the conditions and obtain a subset with $\{k_1, \ldots, k_{\tilde{l}} \} \cap I = \emptyset$).
In this case, if $C$ is a generalized row, then $(A \circ 1_C)_{\mathbf{i}_{I^c}} = A_{\mathbf{i}_{I^c}} \circ 1_{C'}$ for some generalized row $C'$ in $I^c$. This proves (1).

If $C$ is a generalized diagonal, we have to consider two situations. Assuming $K \cap I = \emptyset$, i.\,e. $K$ is subset of $I^c$, we immediately obtain (2). On the other hand, if $K \cap I \neq \emptyset$, then $(A \circ 1_C)_{\mathbf{i}_{I^c}} = A_{\mathbf{i}_{I^c}} \circ 1_{C'}$ for some generalized \emph{row} $C'$ in $I^c$, readily leading to (2) again.

(3) is clear. To see (4), one may argue as in the proof of Lemma \ref{lemLK} (for $q=1$), replacing Lemma \ref{uncondlemma} (2) and (3) by their analogues we just proved. Finally, an easy modification of the proof of Lemma \ref{Absch} yields (5).
\end{proof}

We are now ready to prove Theorem \ref{thmpolgen}. Here, we recall the notation used in the proof of Theorem \ref{thmpol}, with the only difference that now, by $f \lesssim g$ we mean an inequality of the form $f \le C(D,\alpha) g$, where $C(D,\alpha)$ may depend on $D,\alpha$.

\begin{proof}[Proof of Theorem \ref{thmpolgen}]
We will follow the proof of Theorem \ref{thmpol}. In particular, let us assume $M = 1$.

\textbf{Step 1.} Recall the inequality
$$|f(X) - \mathbb{E}f(X)| \le \sum_{d=1}^D \sum_{\nu=1}^d \sum_{\textbf{k} \in I_{\nu,d}} \Big|\sum_{\textbf{i} \in [n]^{\underline{\nu}}} a_{\textbf{i}}^{\textbf{k}} (X_{i_1}^{k_1}- \mathbb{E}X_{i_1}^{k_1}) \cdots (X_{i_\nu}^{k_\nu}-\mathbb{E}X_{i_\nu}^{k_\nu})\Big|$$
from the proof of Theorem \ref{thmpol}.

\textbf{Step 2.} Applying Lemma \ref{generalizations}, we arrive at
\begin{align*}
&\lVert f(X) - \mathbb{E}f(X) \rVert_p \lesssim \sum_{d=1}^D \sum_{\nu=1}^d \sum_{\textbf{k} \in I_{\nu,d}} \Big\|\sum_{\textbf{i} \in [n]^{\underline{\nu}}} a_{\textbf{i}}^{\textbf{k}} (w_{i_1,1}^{(1)} \cdots w_{i_1,k_1}^{(1)}) \cdots (w_{i_\nu,1}^{(\nu)} \cdots w_{i_\nu,k_\nu}^{(\nu)})\Big\|_p.
\end{align*}
Here, $(w_{i,k}^{(j)})$ is an array of i.i.d. symmetric Weibull variables with shape parameter $\alpha$. Now we may define $d$-tensors $A_d$ as in the proof of Theorem \ref{thmpol}. Similarly as in \eqref{eqn:LpNormEstimate}, rewriting and applying Lemma \ref{wb} together with Lemma \ref{generalizations} (4) then yields
{\scriptsize
\begin{align*}
\lVert f(X) - \mathbb{E}f(X) \rVert_p &\lesssim \sum_{d=1}^D \sum_{\nu=1}^d \sum_{\textbf{k} \in I_{\nu,d}}  \lVert \langle A_d \circ 1_{L(\mathcal{K}(k_1, \ldots, k_\nu))}, \otimes_{j=1}^\nu \otimes_{k=1}^{k_j} (w_{i,k}^{(j)})_{i\le n} \rangle \rVert_p\\
&\lesssim \sum_{d=1}^D \sum_{\nu=1}^d \sum_{\textbf{k} \in I_{\nu,d}} \sum_{I \subset [d]} \sum_{\mathcal{J} \in P(I^c)} p^{\abs{I}/r+|\mathcal{J}|/2} \max_{\mathbf{i}_I} \lVert (A_d \circ 1_{L(\mathcal{K}(k_1, \ldots, k_\nu))})_{\mathbf{i}_{I^c}} \rVert_{\mathcal{J}}\\
&\lesssim \sum_{d=1}^D \sum_{I \subset [d]} \sum_{\mathcal{J} \in P(I^c)} p^{\abs{I}/r+|\mathcal{J}|/2} \max_{\mathbf{i}_I} \lVert (A_d)_{\mathbf{i}_{I^c}} \rVert_{\mathcal{J}}.
\end{align*}
}

\textbf{Step 3.} In the proof of Theorem \ref{thmpol} we have decomposed
\begin{align*}
\mathbb{E} \frac{\partial^d f}{\partial x_{i_1} \ldots \partial x_{i_d}}(X) = \nu!l_1! \cdots l_\nu!a_{i_1, \ldots, i_d}  + R_\mathbf{i}^{(d)}
\end{align*}
with a remainder tensor $R_\mathbf{i}^{(d)}$ corresponding to the set of indices $\mathbf{k}$ with $\mathbf{k} > \mathbf{l}$ and $R_\mathbf{i}^{(d)} = 0$ for $d = D$. Again, for any $I \subset [D]$ and any partition $\mathcal{J} \in P(I^c)$,
\begin{align*}
\lVert (A_D)_{\mathbf{i}_{I^c}} \rVert_\mathcal{J} &\le \sum_{\mathcal{K} \in P_D} \lVert (A_D \circ 1_{L(\mathcal{K})})_{\mathbf{i}_{I^c}} \rVert_{\mathcal{J}} \le \sum_{\mathcal{K} \in P_D} \lVert (\mathbb{E} f^{(D)}(X) \circ 1_{L(\mathcal{K})})_{\mathbf{i}_{I^c}} \rVert_{\mathcal{J}}\\ &\lesssim \lVert (\mathbb{E} f^{(D)}(X))_{\mathbf{i}_{I^c}} \rVert_{\mathcal{J}},
\end{align*}
using Lemma \ref{generalizations} (4) in the last step. To complete the proof, we need to show that for any $d = 1, \ldots, D-1$, any $I \subset [d]$ and any partitions $\mathcal{I} \in P([d])$, $\mathcal{J} \in P([d]\backslash I)$,
\begin{equation}\label{tbpg}
\lVert (R^{(d)} \circ 1_{L(\mathcal{I})})_{\mathbf{i}_{I^c}} \rVert_\mathcal{J} \lesssim \sum_{k=d+1}^D \sum_{\substack{\mathcal{K}\in P([k] \backslash I) \\ |\mathcal{K}|\ge|\mathcal{J}|}} \lVert (A_k)_{\mathbf{i}_{I^c}} \rVert_\mathcal{K}.
\end{equation}
Actually, analyzing the proof one can see it is possible to restrict the second sum on the right-hand side to partitions $\mathcal{K}$ with $|\mathcal{K}| \in \{\abs{\mathcal{J}}, \abs{\mathcal{J}}+1 \}$.
Once having proven \eqref{tbpg}, it follows from reverse induction that
{
\footnotesize
\begin{align*}
\sum_{d=1}^D \sum_{I \subset [d]} \sum_{\mathcal{J} \in P(I^c)} p^{\abs{I}/r+|\mathcal{J}|/2} \max_{\mathbf{i}_I} \lVert (A_d)_{\mathbf{i}_{I^c}} \rVert_{\mathcal{J}} \lesssim \sum_{d=1}^D \sum_{I \subset [d]} \sum_{\mathcal{J} \in P(I^c)} p^{\abs{I}/r+|\mathcal{J}|/2} \max_{\mathbf{i}_I} \lVert (\mathbb{E} f^{(d)}(X))_{\mathbf{i}_{I^c}} \rVert_{\mathcal{J}}.
\end{align*}
}Here, we use that for any $p \ge 2$ and any $\abs{\mathcal{K}} \ge \abs{\mathcal{J}}$ we have $p^{\abs{\mathcal{J}}/2} \le p^{\abs{\mathcal{K}}/2}$. In view of Step 2 and Proposition \ref{normtoconc}, this finishes the proof.

\textbf{Step 4.} One last time we need to recall some definitions from the proof of Theorem \ref{thmpol}. We fix some $I \subset [d]$ and $\mathbf{i}_I$, an admissible partition $\mathcal{I} \in P([d])$ and some associated extension $\tilde{\mathcal{I}} \in P([k])$, the $d$-tensor $S_\mathcal{I}^{(d,\mathbf{k})} = (s_\mathbf{i}^{(d,k_1, \ldots, k_\nu)})_{\mathbf{i} \in [n]^d} = (s_\mathbf{i}^{(d)})_{\mathbf{i} \in [n]^d}$ and for any $\mathbf{k} \in I_{\nu,\le D}$ with $\mathbf{k} > \mathbf{l}$ a $\abs{\mathbf{k}}$-tensor $\tilde{S}^{\abs{\mathbf{k}}}$. The notion of admissibility was not relevant in Theorem \ref{thmpol}, as we have not fixed any indices $I$ and values $\mathbf{i}_{I} \in [n]^I$. Here, it simply means that the level sets have to compatible with the fact that we have fixed some of the partial derivatives by $I$ and $\mathbf{i}_I$. Also, note that $\mathcal{I}$ is a partition of $[d]$ and not of $[d] \backslash I$, since it arises from level sets of partial derivatives and includes the partial derivatives taken in $I$.

Our aim is to prove that there is a partition $\mathcal{K} \in P([k]\backslash I)$ with $|\mathcal{K}| \in \{ |\mathcal{J}|, |\mathcal{J}|+1 \}$ such that
\begin{equation}\label{remtpgen}
\lVert (S_\mathcal{I}^{(d,\mathbf{k})})_{\mathbf{i}_{I^c}} \rVert_\mathcal{J} \lesssim \lVert (A_{\abs{\mathbf{k}}})_{\mathbf{i}_{I^c}} \rVert_{\mathcal{K}}.
\end{equation}
$\mathcal{K} = \{K_1, \ldots, K_{\mu+1} \}$ will be defined as follows: for $j = 1,\ldots,\mu$, we add all elements of $J_j$ to $K_j$, so that it remains to assign the elements $r \in \{d+1, \ldots, \abs{\mathbf{k}}\}$ to the sets $K_j$. Since $\tilde{\mathcal{I}}$ is a partition of $\abs{\mathbf{k}}$, there is a unique $k \in \{1,\ldots, \nu\}$ such that $r \in \tilde{I}_k$. Take the smallest element $t =: \pi(r)$ in $\tilde{I}_k$ (since $I_k \subset \tilde{I}_k$, we have $t \in [d]$). If $t \in I^c$, it follows that $t \in K_j$ for some set $K_j$ and we add $r$ to $K_j$. If $t \in I$, we assign $r$ to an ``extra set'' $K_{\mu+1}$. In particular, it may happen that $K_{\mu+1} = \emptyset$. In this case, we ignore $\beta = \mu +1$ in the rest of the proof.

\begin{center}
\begin{figure}[!ht]
\scalebox{0.75}{\includegraphics{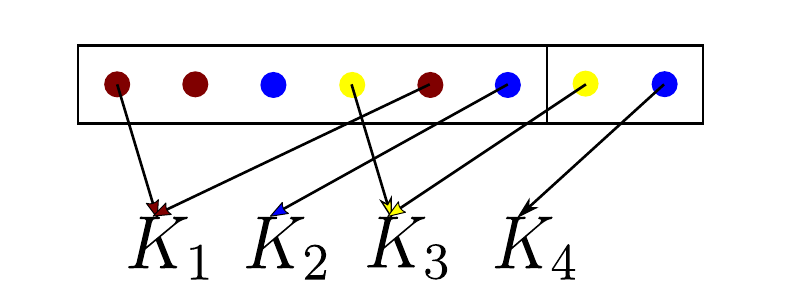}}
\caption{An illustration of the procedure of producing the partition $\mathcal{K}$; here, $I = \{1,2\}$ and we used colors to indicate the partition, i.\,e. $\tilde{\mathcal{I}} = \{ \{1,2,5\}, \{ 3,6,8 \}, \{4,7\} \}$. $\{8\}$ belongs to $K_4 = K_{\mu+1}$ since $\{ 3 \} \in I$. Changing its color to yellow would produce a partition $\mathcal{K}$ with 3 subsets.}
\end{figure}
\end{center}

First off, we claim
\begin{equation}\label{norm2gen}
\lVert (S_\mathcal{I}^{(d,\abs{\mathbf{k}})})_{\mathbf{i}_{I^c}} \rVert_\mathcal{J} \le \lVert (\tilde{S}^{\abs{\mathbf{k}}})_{\mathbf{i}_{I^c}} \rVert_\mathcal{K}.
\end{equation}
To see \eqref{norm2gen}, let $x = (x^{(\beta)})_{\beta=1,\ldots,\mu} = ((x^{(\beta)}_{\mathbf{i}_{J_\beta}}))_{\beta = 1,\ldots,\mu}$, be such that $$\abs{x}_{\mathcal{J}} = \max_{\beta =1,\ldots,\mu} \norm{x^{(\beta)}}_2 \le 1.$$
We embed this in the unit ball with respect to $\abs{x}_{\mathcal{K}} = \max_{i = 1,\ldots,\mu+1} \norm{x^{\beta}}_2$ by defining $y = (y^{(\beta)})_{\beta = 1,\ldots, \mu+1}$ via
\[
y^{(\beta)}_{\mathbf{i}_{K_\beta}} = 
\begin{cases} 
x^{(\beta)}_{\mathbf{i}_{K_\beta \cap [d]}} \prod_{r\in K_\beta\setminus[d]} 1_{i_r = i_{\pi(r)}} & \beta = 1,\ldots, \mu \\
\prod_{r \in K_{\mu+1}} 1_{i_r = i_{\pi(r)}} & \beta = \mu+1.
\end{cases}
\]
Note that $y^{(\mu+1)}$ only has a single non-zero element, and thus it is easy to see that $\abs{y}_{\mathcal{K}} \le 1$. Moreover, by the definition of the matrix $\tilde{S}^{\abs{k}}$ and the fact that if $\mathbf{i} \in L(\tilde{\mathcal{I}})$, then for $r > d$, $i_r = i_{\pi(r)}$, which implies $y_{\mathbf{i}_{K_\beta}}^{(\beta)} = x_{\mathbf{i}_{K_\beta\cap[d]}}^{(\beta)} = x_{\mathbf{i}_{J_\beta}}^{(\beta)}$ as well as $y_{\mathbf{i}_{K_{\mu+1}}}^{(\mu+1)} = 1$ we have
\begin{align}
\skal{(S^{(d,\mathbf{k})})_{\mathbf{i}_{I^c}}, \bigotimes_{\beta = 1}^\mu x^{(\beta)}} = \skal{(\tilde{S}^{(\abs{k})})_{\mathbf{i}_{I^c}}, \bigotimes_{\beta = 1}^{\mu+1} y^{(\beta)}}.
\end{align}
Hence, the supremum on the left hand side of \eqref{norm2gen} is taken over a subset of the unit ball with respect to $\abs{x}_{\mathcal{K}}$.

Finally, it remains to prove
\begin{equation}\label{finalstepgen}
\lVert (\tilde{S}^{\abs{\mathbf{k}}})_{\mathbf{i}_{I^c}} \rVert_{\mathcal{K}} \lesssim \lVert (A_{\abs{\mathbf{k}}})_{\mathbf{i}_{I^c}} \rVert_{\mathcal{K}}
\end{equation}
for any partition $\mathcal{K} \in P(I^c)$. This may be achieved as in the proof of Theorem \ref{thmpol}, replacing Lemma \ref{Absch} by Lemma \ref{generalizations} (5).

Combining \eqref{norm2gen} and \eqref{finalstepgen} yields \eqref{remtpgen}, which finishes the proof.
\end{proof}

It remains to prove Proposition \ref{proposition:weakFormHansonWright} and Theorem \ref{wHSn} (from which Corollary \ref{wHSnc} follows immediately).

\begin{proof}[Proof of Proposition \ref{proposition:weakFormHansonWright}]
The case $\alpha \in (0,1]$ follows immediately from the $d = 2$ case of Corollary \ref{corollary:PsiAlphaKL15}. $\alpha = 2$ corresponds to the well-known Hanson--Wright inequality, see e.\,g. \cite{RV13}.
\end{proof}

\begin{proof}[Proof of Theorem \ref{wHSn}]
Let $\alpha \in (0,1]$ and consider the bound given by Theorem \ref{thmpolgen}. Fix any $d = 1, \ldots, D$. Then, for any $I \subset [d]$, any $\mathbf{i}_I$ and any $\mathcal{J} \in P(I^c)$, we have
$$
\lVert (\mathbb{E} f^{(d)}(X))_{\mathbf{i}_{I^c}} \rVert_{\mathcal{J}} \le \lVert (\mathbb{E} f^{(d)}(X))_{\mathbf{i}_{I^c}} \rVert_{\mathrm{HS}} \le \lVert \mathbb{E} f^{(d)}(X) \rVert_{\mathrm{HS}}
$$
(using \eqref{pHS}) as well as
$$\frac{\alpha}{d} \le \frac{2\alpha}{2\abs{I}+\alpha\abs{\mathcal{J}}} \le 2.$$
If $t/(M^d \lVert \mathbb{E} f^{(d)}(X) \rVert_\mathrm{HS}) \ge 1$, this immediately yields the result. Otherwise, note that the tail bound given in Theorem \ref{wHSn} is trivial. (In fact, here one needs to ensure that $C_{D, \alpha}$ is sufficiently large, e.\,g. $C_{D, \alpha} \ge 1$. It is not hard to see that in general this condition will be satisfied anyway.)\par
In a similar way, it is possible to derive the same results for $\alpha = 2/q$ and any $q \in \mathbb{N}$ from Theorem \ref{thmpol}.\par
From these results, the exponential moment bound follows by standard arguments, see for example \cite[Proof of Theorem 1.1]{BGS18}.
\end{proof}

\appendix
\section{Properties of Orlicz quasinorms}\label{section:OrliczNorms}
As mentioned in the introduction, Orlicz norms \eqref{eqn:definitionOrliczNorm} satisfy the triangle inequality only for $\alpha \ge 1$. However, for any $\alpha \in (0,1)$ \eqref{eqn:definitionOrliczNorm} still is a quasinorm, which for many purposes is sufficient. We shall collect some elementary results on Orlicz quasinorms in this appendix. The first result is a H{\"o}lder-type inequality for the $\Psi_{\alpha}$ norms.

\begin{lemma}\label{lemma:HoelderTypeInequality}
Let $X_1, \ldots, X_k$ be random variables such that $\norm{X_i}_{\Psi_{\alpha_i}} < \infty$ for some $\alpha_i \in (0,1]$ and let $t \coloneqq (\sum_{i = 1}^k \alpha_i^{-1})^{-1}$. Then $\norm{\prod_{i = 1}^k X_i}_{\Psi_t} < \infty$ and
\[
\Big\lVert \prod_{i = 1}^k X_i \Big\rVert_{\Psi_{t}} \le \prod_{j = 1}^k \norm{X_i}_{\Psi_{\alpha_i}}.
\]
\end{lemma}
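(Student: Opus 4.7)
The plan is to derive this directly from the definition of the Orlicz quasinorm via a two-step application of Young's and Hölder's inequalities, with exponents chosen so that the reciprocals sum to one. Write $c_i \coloneqq \norm{X_i}_{\Psi_{\alpha_i}}$, and set $p_i \coloneqq \alpha_i/t$. The crucial observation is that by the definition of $t$, one has $\sum_{i=1}^k 1/p_i = t \sum_{i=1}^k 1/\alpha_i = 1$, so the $p_i$ form a valid family of conjugate Hölder exponents.

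First, I would apply the $k$-fold Young inequality $\prod_i a_i \le \sum_i a_i^{p_i}/p_i$ to $a_i = \abs{X_i}^t/c_i^t$. Since $a_i^{p_i} = \abs{X_i}^{\alpha_i}/c_i^{\alpha_i}$, this yields the pointwise bound
\[
\frac{\abs{X_1 \cdots X_k}^t}{(c_1 \cdots c_k)^t} \le \sum_{i=1}^k \frac{t}{\alpha_i} \cdot \frac{\abs{X_i}^{\alpha_i}}{c_i^{\alpha_i}}.
\]
Exponentiating and using $e^{a+b} = e^a e^b$, this becomes a product of exponentials of $(t/\alpha_i)\abs{X_i}^{\alpha_i}/c_i^{\alpha_i}$. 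Taking expectations, I apply the classical Hölder inequality with exponents $p_i = \alpha_i/t$:
\[
\IE \prod_{i=1}^k \exp\!\Big(\tfrac{t}{\alpha_i}\tfrac{\abs{X_i}^{\alpha_i}}{c_i^{\alpha_i}}\Big) \le \prod_{i=1}^k \Big(\IE \exp\!\Big(\tfrac{\abs{X_i}^{\alpha_i}}{c_i^{\alpha_i}}\Big)\Big)^{t/\alpha_i}.
\]
The power $t/\alpha_i$ is exactly what is required to cancel the prefactor $t/\alpha_i$ inside the exponent. By the definition of the Orlicz quasinorm, each factor on the right is at most $2$, so the product is at most $2^{\sum t/\alpha_i} = 2$.

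Hence $\IE \exp\bigl(\abs{X_1 \cdots X_k}^t / (c_1 \cdots c_k)^t\bigr) \le 2$, which by definition of $\norm{\cdot}_{\Psi_t}$ gives $\norm{\prod_i X_i}_{\Psi_t} \le \prod_i c_i$, as claimed. A minor technical point is that the infimum in \eqref{eqn:definitionOrliczNorm} need not be attained; if one wishes to be fully rigorous, one replaces $c_i$ by $c_i + \varepsilon$ and lets $\varepsilon \downarrow 0$ at the end. Apart from this, there is no real obstacle: the entire argument hinges on the single choice $p_i = \alpha_i/t$, which is forced on us by the requirement $\sum 1/p_i = 1$ together with the constraint that the Young inequality must produce precisely the exponents $\alpha_i$ on $\abs{X_i}$.
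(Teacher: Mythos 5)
Your proposal is correct and follows essentially the same route as the paper: both proofs hinge on the generalized Young inequality with the exponents $p_i = \alpha_i/t$, which is exactly the choice the paper makes. The only (immaterial) difference is in the final step, where the paper bounds $\mathbb{E}\exp\big(\sum_i p_i^{-1}|X_i|^{\alpha_i}/c_i^{\alpha_i}\big)$ by $\sum_i p_i^{-1}\,\mathbb{E}\exp\big(|X_i|^{\alpha_i}/c_i^{\alpha_i}\big)$ using convexity of the exponential, while you use H\"older's inequality to get $\prod_i\big(\mathbb{E}\exp(|X_i|^{\alpha_i}/c_i^{\alpha_i})\big)^{1/p_i}$; both yield the bound $2$.
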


\begin{proof}
By homogeneity we can assume $\norm{X}_{\Psi_{\alpha_i}} = 1$ for all $i = 1,\ldots,k$. We will need the general form of Young's inequality, i.\,e. for all $p_1, \ldots, p_k > 1$ satisfiyng $\sum_{i = 1}^k p_i^{-1} = 1$ and any $x_1, \ldots, x_k \ge 0$ we have
\[
\prod_{i = 1}^k x_i \le \sum_{i = 1}^k p_i^{-1} x_i^{p_i},
\]
which follows easily from the concavity of the logarithm. If we apply this to $p_i \coloneqq \alpha_i t^{-1}$ and use the convexity of the exponential function, we obtain
\begin{align*}
\IE \exp\Big( \prod_{i = 1}^k \abs{X_i}^{t} \Big) &\le \IE \exp \Big( \sum_{j = 1}^k p_i^{-1} \abs{X_i}^{\alpha_i} \Big) \le \sum_{j = 1}^k p_i^{-1} \IE \exp \Big( \abs{X_i}^{\alpha_i} \Big) \le 2.
\end{align*}
Consequently, we have $\norm{\prod_{i = 1}^k X_i}_{\Psi_t} \le 1$.
\end{proof}

The random variables $X_1, \ldots, X_k$ need not be independent, i.\,e. we can consider a random vector $X = (X_1,\ldots, X_k)$ with marginals having $\alpha$-sub-exponential tails. The special case $\alpha_i = \alpha$ for all $i = 1,\ldots, k$ gives
\[
\Big \lVert \prod_{i = 1}^k X_i \Big \rVert_{\Psi_{\alpha/k}} \le \prod_{j = 1}^k \norm{X_i}_{\Psi_\alpha}.
\]

To state the other lemmas, for any $0 < \alpha < 1$ define
\begin{equation}
d_\alpha \coloneqq (\alpha e)^{1/\alpha}/2 \qquad \text{and} \qquad D_\alpha \coloneqq (2e)^{1/\alpha}.
\end{equation}

\begin{lemma}\label{lemma:PsiAlphaAndGrowth}
For any $0 < \alpha < 1$ we have
\begin{equation}	\label{eqn:EquivalenceOfNorms}
d_\alpha \sup_{p \ge 1} \frac{\norm{X}_p}{p^{1/\alpha}} \le \norm{X}_{\Psi_\alpha} \le D_\alpha \sup_{p \ge 1} \frac{\norm{X}_p}{p^{1/\alpha}}.
\end{equation}
\end{lemma}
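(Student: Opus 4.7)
The plan is to prove the two inequalities separately, both via the standard Taylor series characterization of the exponential Orlicz quasinorm.

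For the upper bound, I would set $B := \sup_{p \ge 1} \norm{X}_p / p^{1/\alpha}$ and verify that $t := D_\alpha B = (2e)^{1/\alpha} B$ satisfies $\IE \exp(\abs{X}^\alpha/t^\alpha) \le 2$. Expanding the exponential as a Taylor series reduces the task to bounding $\sum_{k \ge 1} \IE\abs{X}^{\alpha k}/(k!\,t^{\alpha k})$ by~$1$. I would split this sum according to whether $\alpha k \ge 1$ or $\alpha k < 1$. For indices with $\alpha k \ge 1$ the definition of $B$ gives $\IE\abs{X}^{\alpha k} \le B^{\alpha k}(\alpha k)^k$, and Stirling's lower bound $k! \ge (k/e)^k$ then makes each term at most $(\alpha e B^\alpha/t^\alpha)^k = (\alpha/2)^k$; summing a geometric series (with ratio $\alpha/2 \le 1/2$) controls this part. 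For indices with $\alpha k < 1$, monotonicity of $L^p$ norms on a probability space yields $\norm{X}_{\alpha k} \le \norm{X}_1 \le B$, so $\IE\abs{X}^{\alpha k} \le B^{\alpha k}$ and each term is bounded by $(1/(2e))^k/k!$; summing gives at most $e^{1/(2e)} - 1 < 1/2$. Combining the two contributions produces a total strictly below~$1$.

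For the lower bound, I would set $t := \norm{X}_{\Psi_\alpha}$ and extract from $\IE \exp(\abs{X}^\alpha/t^\alpha) \le 2$ the term-wise bound $\IE \abs{X}^{\alpha k} \le k!\, t^{\alpha k}$ for every $k \ge 1$ (the subtraction of the constant term in the Taylor expansion gives $\sum_{k \ge 1} \IE\abs{X}^{\alpha k}/(k!\,t^{\alpha k}) \le 1$, and all summands are nonnegative). Applying the sharp Stirling upper bound $k! \le \sqrt{2\pi k}\,(k/e)^k e^{1/(12k)}$ and taking the $(\alpha k)$-th root then yields
\[
\frac{\norm{X}_{\alpha k}}{(\alpha k)^{1/\alpha}} \le \bigl[\sqrt{2\pi k}\,e^{1/(12k)}\bigr]^{1/(\alpha k)} \, \frac{1}{(\alpha e)^{1/\alpha}}\, t,
\]
since $(k/e)^{1/\alpha}/(\alpha k)^{1/\alpha} = 1/(\alpha e)^{1/\alpha}$. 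To pass from integer values $p = \alpha k$ to arbitrary $p \ge 1$, I would choose $k := \lceil p/\alpha \rceil$ so that $\alpha k \ge p$ and $\norm{X}_p \le \norm{X}_{\alpha k}$ by monotonicity of $L^p$ norms on a probability space, with $k \le 2p/\alpha$.

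The main obstacle is keeping the multiplicative prefactor $[\sqrt{2\pi k}\,e^{1/(12k)}]^{1/(\alpha k)}$ under control uniformly in $p \ge 1$ and $\alpha \in (0,1)$; this is the source of the factor $2$ appearing in $d_\alpha = (\alpha e)^{1/\alpha}/2$. The worst case occurs at small $k$, but for $\alpha \in (0,1)$ the claimed constant $1/d_\alpha = 2/(\alpha e)^{1/\alpha}$ grows very rapidly as $\alpha \to 0$, providing enough slack to absorb all Stirling-type prefactors. The proof of this bookkeeping is routine — essentially reducing to showing that $[\sqrt{2\pi k}\,e^{1/(12k)}]^{1/(\alpha k)} \cdot (\alpha k/p)^{1/\alpha} \le 2$ for the chosen $k$ — and completes the argument.
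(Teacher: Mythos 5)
Your upper bound is correct and yields the same constant $D_\alpha = (2e)^{1/\alpha}$ as the paper; the split into $\alpha k \ge 1$ and $\alpha k < 1$ is a minor variant of the paper's device of extending the supremum to $p \ge \alpha$ before expanding the exponential (note that since $\alpha<1$ the index $k=1$ always falls into the second group, so your geometric series effectively starts at $k=2$ and the total is indeed below $1$).

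The lower bound, however, has a genuine gap. Your final bookkeeping claim
\[
\bigl[\sqrt{2\pi k}\,e^{1/(12k)}\bigr]^{1/(\alpha k)} \cdot \Bigl(\frac{\alpha k}{p}\Bigr)^{1/\alpha} \le 2
\]
is false precisely in the binding regime. Take $p=1$ and $\alpha$ close to $1$: then $k=\lceil p/\alpha\rceil=2$ is forced, $(\alpha k/p)^{1/\alpha}\to 2$, and the Stirling prefactor $[\sqrt{4\pi}\,e^{1/24}]^{1/(2\alpha)}\to\sqrt{3.70}\approx 1.92$, so the product is about $3.86$. Your slack argument points at $\alpha\to 0$, where $1/d_\alpha=2(\alpha e)^{-1/\alpha}$ indeed blows up, but the constraint is tightest as $\alpha\to 1^-$, where $1/d_\alpha\to 2/e\approx 0.736$. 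Moreover the failure is not merely one of careless estimation: the best information your route extracts is $\norm{X}_{2\alpha}\le (2!)^{1/(2\alpha)}\,t\to\sqrt{2}\,t$, and $\norm{X}_1\le\norm{X}_{2\alpha}$ then gives $\norm{X}_1\le\sqrt 2\, t$, which cannot be improved to the required $\norm{X}_1\le (2/e)\,t$. The combination of term-wise Taylor coefficients, Stirling, and rounding $p$ up to a multiple of $\alpha$ is intrinsically too lossy for the stated $d_\alpha=(\alpha e)^{1/\alpha}/2$. The paper avoids all three losses by working with arbitrary real $p\ge 1$ directly: from the pointwise inequality $x^p\le (p/(\alpha e))^{p/\alpha}e^{x^\alpha}$ (proved by elementary calculus) one gets $\IE\abs{X}^p\le 2\,(p/(\alpha e))^{p/\alpha}$ under the normalization $\norm{X}_{\Psi_\alpha}=1$, hence $\norm{X}_p\le 2^{1/p}(\alpha e)^{-1/\alpha}p^{1/\alpha}\le 2(\alpha e)^{-1/\alpha}p^{1/\alpha}$, with the factor $2$ in $d_\alpha$ coming only from $2^{1/p}\le 2$. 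You should replace your lower-bound argument by this (or weaken the constant you claim).
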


The statement of the lemma remains true for $\alpha \ge 1$, with ($\alpha$-independent constants) $d_\alpha = 1/2$ and $D_\alpha = 2e$, see \cite[Section 8]{Bob10}. In the proof, we will closely follow the proof therein, but keep track of the $\alpha$-dependent constants.

\begin{proof}
We begin with the left inequality. By homogeneity, we assume $\norm{X}_{\Psi_\alpha} = 1$. First let us show that we have 
\begin{equation}		\label{eqn:gGe0}
g(x) \coloneqq \left( \alpha e  \right)^{-1/\alpha} e^{x^\alpha} - x \ge 0 \quad \quad \text{for}\quad  x \ge 0.
\end{equation}
Note that $g$ is continuous on $[0,\infty)$ and differentiable on $(0,\infty)$ with $g(0) > 0$ and $g(x) \to \infty$ as $x \to \infty$. Therefore, it suffices to find the critical points. We can rewrite the condition $g'(x) = 0$ as $e^y y = y^{1/\alpha} (\alpha e)^{1/\alpha}$, setting $y \coloneqq x^\alpha$. From this representation it can be seen that there can be at most two points $x_0$ and $x_1$ satisfying this condition. One of these points is $x_\alpha \coloneqq \alpha^{-1/\alpha}$, and we have $g(x_\alpha) = 0$. A short calculation shows that $g''(x_\alpha) = \alpha^{1/\alpha +1} > 0$, so that $x_\alpha$ is a global minimum, from which $g \ge 0$ follows. \par
Next, from this we can infer for all $p \ge 1$ and $\alpha > 0$
\begin{equation}
x^p \le \left( \frac{p}{\alpha e} \right)^{p/\alpha} e^{x^\alpha}.
\end{equation}
Indeed, by a transformation $y = x^p$ and the change $\tilde{\alpha} = \frac{\alpha}{p}$ this is just an application of \eqref{eqn:gGe0}. Consequently, for any $p \ge 1$ we have
\[
\mathbb{E} \abs{X}^p \le \left( \frac{p}{\alpha e} \right)^{p/\alpha} \IE \exp\left( \abs{X}^\alpha \right) \le 2 \left( \frac{p}{\alpha e} \right)^{p/\alpha} \le 2^p \left( \frac{p}{\alpha e} \right)^{p/\alpha},
\]
i.\,e.
\[
\norm{X}_p \le 2 (\alpha e)^{-1/\alpha} p^{1/\alpha}.
\]
For the second inequality, again assume that $\sup_{p \ge 1} \frac{\norm{X}_p}{p^{1/\alpha}} = 1.$
First, we need to extend the the supremum to $p \in [\alpha, \infty)$, which can be done as follows. For any $p \in [\alpha, 1)$ we have
\[
\frac{\norm{X}_p}{p^{1/\alpha}} \le \frac{\norm{X}_1}{p^{1/\alpha}} \le \frac{1}{p^{1/\alpha}} \le \frac{1}{\alpha^{1/\alpha}}
\]
and therefore
\[
\sup_{p \ge \alpha} \frac{\norm{X}_p}{p^{1/\alpha}} \le \frac{1}{\alpha^{1/\alpha}}.
\]
Now, by Taylor's expansion and using the inequality $n^n \le e^n n!$ this gives
\[
\IE \exp\left( \frac{\abs{X}^\alpha}{t^\alpha} \right) = 1 + \sum_{n = 1}^\infty \frac{\IE \abs{X}^{\alpha n}}{t^{\alpha n} n!} \le 1 + \sum_{n = 1}^\infty \frac{n^n}{n! t^{\alpha n}} \le 1 + \sum_{n = 1}^\infty \left(\frac{e}{t^\alpha}\right)^{n} = \frac{1}{1 - e/t^\alpha}.
\]
For $t = (2e)^{1/\alpha}$ this is less or equal to $2$, so that
\[
\norm{X}_{\Psi_\alpha} \le (2e)^{1/\alpha} \sup_{p \ge 1} \frac{\norm{X}^p}{p^{1/\alpha}}.
\]
\end{proof}

\begin{lemma}
For any $0 < \alpha < 1$ and any random variables $X,Y$ we have
\begin{equation}
  \norm{X + Y}_{\Psi_\alpha} \le 2^{1/\alpha}\left( \norm{X}_{\Psi_\alpha} + \norm{Y}_{\Psi_\alpha} \right).
\end{equation}
\end{lemma}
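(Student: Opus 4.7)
The plan is to combine two elementary inequalities: the subadditivity $(a+b)^\alpha \le a^\alpha + b^\alpha$ for $a,b \ge 0$ and $\alpha \in (0,1]$, and the AM--GM bound $ab \le \tfrac{1}{2}(a^2+b^2)$. Set $s := \norm{X}_{\Psi_\alpha}$ and $t := \norm{Y}_{\Psi_\alpha}$; the inequality is trivial unless both are finite, and the degenerate cases $s = 0$ or $t = 0$ reduce immediately to the single-variable case. Define the candidate value $u := 2^{1/\alpha}(s+t)$, so that $u^\alpha = 2(s+t)^\alpha$; the goal is to show that $u$ is admissible in the infimum \eqref{eqn:definitionOrliczNorm} defining $\norm{X+Y}_{\Psi_\alpha}$.

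First I would note that the infimum in \eqref{eqn:definitionOrliczNorm} is itself attained whenever it is finite: by monotone convergence, $r \downarrow s$ yields $\IE \exp(|X|^\alpha/s^\alpha) \le 2$, and analogously for $Y$. Next, using $|X+Y|^\alpha \le |X|^\alpha + |Y|^\alpha$ inside the exponential, followed by AM--GM, I obtain
\begin{align*}
\IE \exp\Big(\frac{|X+Y|^\alpha}{u^\alpha}\Big)
&\le \IE \Big[\exp\Big(\frac{|X|^\alpha}{u^\alpha}\Big) \exp\Big(\frac{|Y|^\alpha}{u^\alpha}\Big)\Big] \\
&\le \tfrac{1}{2}\IE \exp\Big(\frac{2|X|^\alpha}{u^\alpha}\Big) + \tfrac{1}{2}\IE \exp\Big(\frac{2|Y|^\alpha}{u^\alpha}\Big).
\end{align*}

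The proof then closes by observing that $2/u^\alpha = 1/(s+t)^\alpha \le \min(1/s^\alpha, 1/t^\alpha)$, so each of the two expectations on the right-hand side is bounded by $2$ by the preceding step, yielding a total of at most $2$. Consequently $u$ lies in the defining set of $\norm{X+Y}_{\Psi_\alpha}$, proving the claim. There is no genuine obstacle here: the argument is a two-line estimate, and the only subtlety is the routine attainment of the infimum. The same scheme in fact gives an analogous inequality for any finite sum, with constant $k^{1/\alpha}$ in place of $2^{1/\alpha}$.
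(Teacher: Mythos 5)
Your proof is correct and follows essentially the same route as the paper's: subadditivity of $x \mapsto x^\alpha$ on $[0,\infty)$, then Young's inequality $ab \le \tfrac{1}{2}(a^2+b^2)$ applied to the two exponentials, with the same choice of candidate value $2^{1/\alpha}(\norm{X}_{\Psi_\alpha}+\norm{Y}_{\Psi_\alpha})$. The only difference is cosmetic (you bound $2/u^\alpha \le \min(s^{-\alpha},t^{-\alpha})$ after the AM--GM step rather than splitting the exponent first), and your explicit remarks on attainment of the infimum and the degenerate cases are routine points the paper leaves implicit.
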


\begin{proof}
Let $K \coloneqq \norm{X}_{\Psi_\alpha}$ and $L \coloneqq \norm{Y}_{\Psi_\alpha}$ and define $t \coloneqq 2^{1/\alpha}(K+L)$. We have
\begin{align*}
\IE \exp\left( \frac{\abs{X+Y}^\alpha}{t^\alpha} \right) &\le \IE \exp \left( \frac{(\abs{X} + \abs{Y})^\alpha}{t^\alpha} \right) \le \IE \exp \left( \frac{\abs{X}^\alpha + \abs{Y}^\alpha}{2(K+L)^\alpha} \right) \\
&\le \IE \exp \left( \frac{\abs{X}^\alpha}{2K^\alpha} \right) \exp \left( \frac{\abs{Y}^\alpha}{2L^\alpha} \right) \\
&\le \frac{1}{2} \IE \exp\left( \frac{\abs{X}^\alpha}{K^\alpha} \right) + \frac{1}{2} \IE \exp\left( \frac{\abs{Y}^\alpha}{L^\alpha} \right) \le 2.
\end{align*}
Here, the second step follows from the inequality $(x+y)^\alpha \le x^\alpha + y^\alpha$ valid for all $x,y \ge 0$ and $\alpha \in [0,1]$, and the fourth one is an application of Young's inequality $ab \le a^2/2 + b^2/2$ for all positive $a,b$.
\end{proof}

\begin{lemma}\label{lemma:ExpectationVsRV}
Let $0 < \alpha < 1$. For all random variables $X$ we have
\[
\norm{\IE X}_{\Psi_\alpha} \le \frac{1}{d_\alpha (\log 2)^{1/\alpha}} \norm{X}_{\Psi_\alpha}.
\]
\end{lemma}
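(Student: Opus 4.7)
The plan is to reduce the statement to two standard facts: a direct computation of the $\Psi_\alpha$-(quasi)norm of a constant, and the lower bound in the equivalence from Lemma \ref{lemma:PsiAlphaAndGrowth} applied at $p=1$, combined with Jensen's inequality.

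First I would compute $\lVert c \rVert_{\Psi_\alpha}$ for a deterministic constant $c \in \mathbb{R}$. Since $\mathbb{E}\exp(|c|^\alpha/t^\alpha) = \exp(|c|^\alpha/t^\alpha)$, the defining condition in \eqref{eqn:definitionOrliczNorm} reduces to $|c|^\alpha/t^\alpha \le \log 2$, which gives the exact identity $\lVert c \rVert_{\Psi_\alpha} = |c|/(\log 2)^{1/\alpha}$. Applying this to $c = \mathbb{E}X$, the claim becomes equivalent to the moment bound
\[
|\mathbb{E} X| \;\le\; \frac{1}{d_\alpha}\, \lVert X \rVert_{\Psi_\alpha}.
\]

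Next, Jensen's inequality yields $|\mathbb{E}X| \le \mathbb{E}|X| = \lVert X \rVert_1$. The left inequality in \eqref{eqn:EquivalenceOfNorms} from Lemma \ref{lemma:PsiAlphaAndGrowth}, specialized to $p = 1$, gives
\[
d_\alpha\, \lVert X \rVert_1 \;\le\; d_\alpha \sup_{p \ge 1} \frac{\lVert X \rVert_p}{p^{1/\alpha}} \;\le\; \lVert X \rVert_{\Psi_\alpha},
\]
and chaining these two estimates delivers the required bound on $|\mathbb{E}X|$, hence the Lemma.

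I do not foresee a serious obstacle here; the only thing to check carefully is that the identity $\lVert c \rVert_{\Psi_\alpha} = |c|/(\log 2)^{1/\alpha}$ holds for the quasinorm regime $\alpha \in (0,1)$ as well, which follows directly from the definition \eqref{eqn:definitionOrliczNorm} since monotonicity of $t \mapsto \exp(|c|^\alpha/t^\alpha)$ makes the infimum explicit and the proof does not rely on subadditivity.
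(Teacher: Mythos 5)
Your proposal is correct and matches the paper's proof: both compute $\lVert \IE X\rVert_{\Psi_\alpha} = \abs{\IE X}/(\log 2)^{1/\alpha}$ from the definition, bound $\abs{\IE X} \le \norm{X}_1$, and apply the left inequality of Lemma \ref{lemma:PsiAlphaAndGrowth} at $p=1$. No differences worth noting.
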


\begin{proof}
Assuming $\norm{X}_{\Psi_\alpha} < \infty$, an application of Lemma \ref{lemma:PsiAlphaAndGrowth} gives
\[
  \norm{\IE X}_{\Psi_\alpha} = \frac{\abs{\IE X}}{(\log2)^{1/\alpha}} \le \frac{\norm{X}_1}{(\log2)^{1/\alpha}} \le \frac{1}{d_\alpha (\log2)^{1/\alpha}}\norm{X}_{\Psi_\alpha}.
\]
\end{proof}

As a consequence of the last two results, we can readily infer the following corollary.

\begin{korollar}\label{corollary:CenteringAndPsiAlpha}
For any $\alpha > 0$ and any random variable $X$ we have
\[
\norm{X - \IE X}_{\Psi_\alpha} \le 2^{1/\alpha}\left(1+(d_\alpha \log2)^{-1/\alpha} \right) \norm{X}_{\Psi_\alpha}.
\]
\end{korollar}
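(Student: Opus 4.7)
The corollary is a direct combination of the two immediately preceding lemmas, so the plan is essentially a one-line computation followed by bookkeeping of constants.

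The approach is to decompose $X - \IE X$ as a sum $X + (-\IE X)$ and apply the quasi-triangle inequality just proved. Since the Orlicz functional $t \mapsto \IE\exp(|Z|^{\alpha}/t^{\alpha})$ only depends on $|Z|$, the quasinorm is symmetric, i.\,e.\ $\norm{-\IE X}_{\Psi_\alpha} = \norm{\IE X}_{\Psi_\alpha}$. The previous lemma (for $\alpha \in (0,1)$) gives
\[
\norm{X - \IE X}_{\Psi_\alpha} \le 2^{1/\alpha}\bigl(\norm{X}_{\Psi_\alpha} + \norm{\IE X}_{\Psi_\alpha}\bigr).
\]
Then I would invoke Lemma \ref{lemma:ExpectationVsRV} to bound $\norm{\IE X}_{\Psi_\alpha} \le (d_\alpha \log 2)^{-1/\alpha} \norm{X}_{\Psi_\alpha}$, and factor out $\norm{X}_{\Psi_\alpha}$ to arrive at the claimed constant $2^{1/\alpha}\bigl(1 + (d_\alpha \log 2)^{-1/\alpha}\bigr)$.

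The only subtlety worth mentioning is that the two lemmas invoked were stated for $\alpha \in (0,1)$, while the corollary is asserted for all $\alpha > 0$. For $\alpha \ge 1$ the functional $\norm{\cdot}_{\Psi_\alpha}$ is a genuine norm, so the triangle inequality holds with constant $1$, and Jensen's inequality combined with the monotonicity of $x \mapsto \exp(|x|^\alpha/t^\alpha)$ gives $\norm{\IE X}_{\Psi_\alpha} \le \norm{X}_{\Psi_\alpha}$. Hence in this regime we get the sharper bound $\norm{X - \IE X}_{\Psi_\alpha} \le 2\norm{X}_{\Psi_\alpha}$, which is dominated by the stated expression once one checks that $2^{1/\alpha}(1 + (d_\alpha \log 2)^{-1/\alpha}) \ge 2$ for the relevant definitions of $d_\alpha$ (trivially true with $d_\alpha \le 1/2$ or similar normalisation).

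There is no real obstacle here; the content of the corollary is exactly that centering only costs a constant factor depending on $\alpha$, and the two ingredients have already been prepared. The only thing to be careful about is to use the symmetry of the Orlicz quasinorm under sign change before applying the quasi-triangle inequality, and to verify that the constants glue together as advertised.
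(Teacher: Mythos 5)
Your proof is correct and is exactly the paper's intended argument: the corollary is presented there as an immediate consequence of the quasi-triangle inequality and Lemma \ref{lemma:ExpectationVsRV}, combined precisely as you do, and your separate, sharper treatment of $\alpha \ge 1$ (genuine triangle inequality plus Jensen) is a sensible supplement since the two lemmas are only stated for $\alpha \in (0,1)$. Note only that Lemma \ref{lemma:ExpectationVsRV} literally yields the factor $d_\alpha^{-1}(\log 2)^{-1/\alpha}$ rather than $(d_\alpha \log 2)^{-1/\alpha}$; this cosmetic mismatch in the constant is already present in the paper's own statement of the corollary and is not introduced by your argument.
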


\printbibliography
\end{document}